\numberwithin{equation}{section}
\theoremstyle{plain}
\newtheorem{theorem}{Theorem}[section]
\newtheorem{lemma}[theorem]{Lemma}
\newtheorem{corollary}[theorem]{Corollary}
\newtheorem{proposition}[theorem]{Proposition}
\theoremstyle{definition}
\newtheorem{remark}[theorem]{Remark}
\newcommand{\R}{\mathbb{R}}
\newcommand{\C}{\mathbb{C}}
\newcommand{\Z}{\mathbb{Z}}
\newcommand{\K}{\mathbb{K}}
\newcommand{\F}{\mathcal{F}}
\newcommand{\KG}{\mathrm{KG}}
\newcommand{\supp}{\mathrm{supp}}
\newcommand{\Avg}{\mathrm{Avg}}
\DeclareMathOperator{\codim}{\mathrm{codim}}
\DeclareMathOperator{\Int}{\mathrm{Int}}
\DeclareMathOperator{\Row}{\mathrm{Row}}
\DeclareMathOperator{\Ver}{\mathrm{Vert}}
\DeclareMathOperator{\Aff}{\mathrm{Aff}}
\DeclareMathOperator{\Conf}{\mathrm{Conf}}
\title{Fan distributions via Tverberg partitions and Gale duality}
\author[Huang]{Shuai Huang}
\address[EH]{Dept.\ Math.\, Bard College, Annandale-on-Hudson, NY 12504, USA}
\email{eh6041@bard.edu} 
\author[Miller]{Jasper Miller}
\address[JM]{Dept.\ Math.\, Bard College, Annandale-on-Hudson, NY 12504, USA}
\email{jm5305@bard.edu} 
\author[Rose-Levine]{Daniel Rose-Levine}
\address[DR]{Dept.\ Math.\, Bard College, Annandale-on-Hudson, NY 12504, USA}
\email{dr6048@bard.edu} 
\author[Simon]{Steven Simon}
\address[SS]{Dept.\ Math.\, Bard College, Annandale-on-Hudson, NY 12504, USA}
\email{ssimon@bard.edu}
\begin{document}

\begin{abstract}

Equipartition theory, beginning with the classical ham sandwich theorem, seeks the fair division of finite point sets in $\R^d$ by the full-dimensional regions determined by a prescribed geometric dissection of $\R^d$. Here we examine \emph{equidistributions} of finite point sets in $\R^d$ by prescribed \emph{low dimensional} subsets of $\R^d$. Our main result states that if $r\geq 3$ is a prime power, then for any $m$-coloring of a sufficiently small point set $X$ in $\R^d$, there exists an $r$-fan in $\R^d$ -- that is, the union of $r$ ``half-flats'' of codimension $r-2$ centered about a common $(r-1)$-codimensional affine subspace -- which captures all the points of $X$ in such a way that each half-flat contains at most an $r$-th of the points from each color class. The number of points in $\R^d$ we require for this is essentially sharp when $m\geq 2$ and in particular is asymptotically tight.  Additionally, we extend our equidistribution results to ``piercing''  distributions in a similar fashion to Dolnikov's hyperplane transversal generalization of the ham sandwich theorem. By analogy with recent work of Frick et al., our results are obtained by applying Gale duality to linear cases of topological Tverberg-type theorems. Finally, we extend our distribution results to multiple $r$-fans after establishing a multiple intersection version of a topological Tverberg-type theorem due to Sarkaria. 
\end{abstract}

\maketitle

\section{Introduction and Statement of Results}
\label{sec:intro}

\subsection{From Equipartitions to Equidistributions of Colored Point Sets}

In recent years, equipartition problems have come to occupy a central place in geometric and topological combinatorics. Given a fixed integer $r\geq 2$ and a finite collection of finite point sets (or finite continuous measures) in $\R^d$, one seeks a prescribed geometric division of $\R^d$ into $r$ interior disjoint and typically convex regions, each of whose interiors contains no more than $1/r$ of the total number of points from each point set (respectively, of each total measure). The prototypical such result is the classical ham sandwich theorem~\cite{BZ04, ST42}, which asserts that any $d$ finite points sets (or finite continuous measures) in $\R^d$ can be equipartitioned by an affine hyperplane, that is, by the two half-spaces determined by the hyperplane. We refer the reader to the excellent recent survey \cite{RPS22} for more on equipartition theory. 

 Instead of equipartitions of finite point sets in $\R^d$ by full dimensional regions, here we seek their \emph{equidistribution} by \emph{low dimensional} subsets. Specifically, suppose that $A$ is a $k$-flat in $\R^d$, where $1\leq k\leq d-1$. Given an affine hyperplane $H$ in $\R^d$ which intersects but does not contain $A$, we call the intersection $B=A\cap H^+$ of $A$ with a (closed) half-space $H^+$ of $H$ a (closed) \emph{half $k$-flat}. Thus the boundary $\partial B=A\cap H$ of $B$ is a $(k-1)$-flat. For $r\geq 2$, we say that the union $F_r=\cup_{j=1}^r B_j$ of $r$ half $k$-flats $B_1,\ldots, B_r$ is a $k$-dimensional \emph{$r$-fan} provided that there is a single $(k-1)$-flat $C$, called the \emph{center} of the fan, which is the boundary of each $B_j$. A $(d-k)$-dimensional $r$-fan will be said to have codimension $k$. In particular, an $r$-fan of codimension one is the union of $r$ half-hyperplanes centered about an affine space of codimension two. Equipartitions by the regions determined by such $r$-fans have been previously considered (see, e.g.,~\cite{BM01,BM02,Si15}).

 Now suppose that $X$ is a finite set in $\R^d$. We say that an $r$-fan $F_r=\cup_{j=1}^rB_j$ in $\R^d$ \emph{distributes X} if it contains $X$. If $X=X_1\sqcup\cdots\sqcup X_m$ is a partition of $X$ into $m$ non-empty subsets, which we call an \emph{$m$-coloring} of $X$, we say that an $r$-fan \emph{equidistributes} this coloring if it distributes $X$ so that the interior $\Int(B_j)=B_j\setminus C$ of each half-flat contains at most $1/r$ of the points from each $X_i$.

We present equidistribution results for two types of $r$-fans, the first of which is given by certain hyperplane arrangements. Namely, suppose that $H_1,\ldots, H_r$ are oriented affine hyperplanes in $\R^d$ such that (1)  any $r-1$ of the $r$ hyperplanes are independent (i.e., their normal vectors are linearly independent) and (2) the $r$ hyperplanes are dependent and the $r$-fold intersection of the hyperplanes is non-empty (and so is a  flat of codimension $r-1$). Taking indices cyclically, for each $j\in [r]$ we let $A_j=\cap_{i\neq j-1,j} H_i$ be the $(r-2)$-codimensional flat given by intersecting all of the hyperplanes except $H_{j-1}$ and $H_j$, and we let $B_j=H_j^+\cap A_j$ be the half-flat given by intersecting $A_j$ with the half-space $H_j^+$ of $H_j$. The union $F_r=\cup_{j=1}^r B_j$ is then an $r$-fan of codimension $r-2$ whose $(r-1)$-codimensional center is the common intersection of the $H_i$. We shall say that such an $r$-fan is \emph{conical} (See Figure~1 below for a picture when $r=3$). For instance, a conical $r$-fan in $\R^{r-1}$ is a union of $r$ rays emanating from a common point. Our second class of fans are \emph{regular $r$-fans}. These are fans of codimension one for which the dihedral angle between any two successive half-hyperplanes is $2\pi/r$. 
 
 We have the following equidistribution result for conical $r$-fans. Recall that a set of points in $\R^d$ is affinely spanning if it is not contained in an affine hyperplane. 

\begin{theorem} 
\label{thm:real equidistribute}
Let $d, m\geq 1$ be integers, let $r\geq 3$ be a prime power, and let $n\geq (r-1)(d+m+1)+1$. Then any $m$-coloring of a set of $n$ affinely spanning points in $\R^{n-d-1}$ can be equidistributed by a conical $r$-fan. \end{theorem}

 Our proof of Theorem~\ref{thm:real equidistribute} shows that the interior of each half-flat of the conical $r$-fan must contain at least one point from $X$, and, as we show in Section~\ref{sec:generic}, for ``typical'' point sets the union of the interiors of the half-flats contains at least $(r-1)(d+1)+1$ points of $X$. Similar remarks apply to Theorems~\ref{thm:complex equidistribute}, \ref{thm:Dolnikov}, and \ref{thm:colorful} below. As an example, letting $r=3$ and $d=m=1$ in Theorem~\ref{thm:real equidistribute} shows that for a typical set $X$ of $7$ points in $\R^5$ there is a ($4$-dimensional) conical $3$-fan in $\R^5$ which catches all $7$ points of $X$ so that either (a) $1$ point of $X$ lies in the fan's $3$-dimensional center while the remaining $6$ points are equally divided among the fan's $3$ open half-flats, or (b) $2$ points lie in the center while the remaining $5$ are divided so that $2$ of the open half-flats receive $2$ points each and the remaining open half flat receives a single point. Figure~1 gives a picture of case (a) when viewed in the quotient $\R^5/C\cong \R^2$ of $\R^5$ by the fan's center $C$.

\begin{figure}
\label{fig:pic}
    \centering
    \includegraphics[scale=.75]{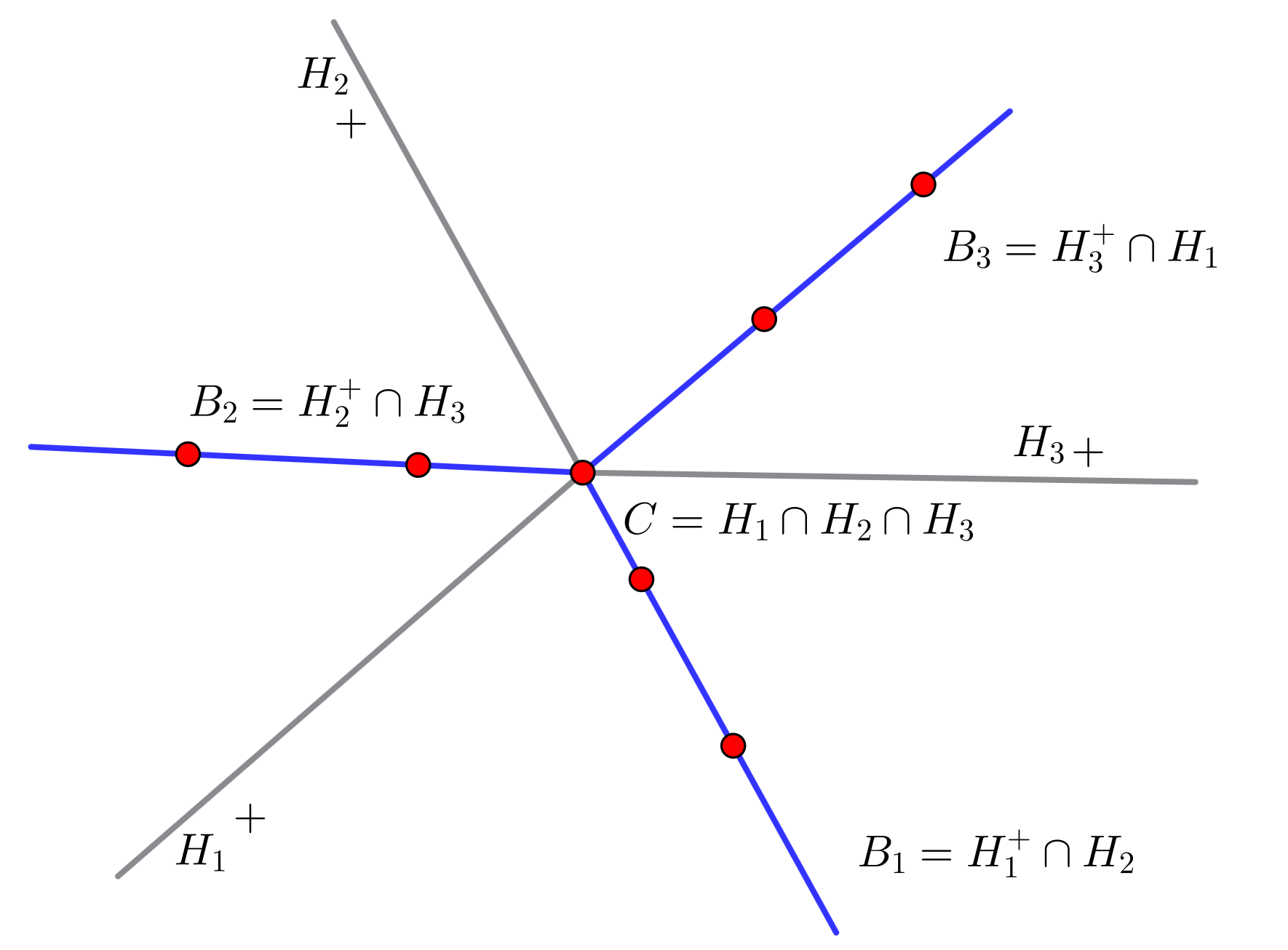}
    \caption{Equidistribution of $7$ points by a conical $3$-fan}
\end{figure}

 Proposition~\ref{prop:optimal} below will show in particular that Theorem~\ref{thm:real equidistribute} is nearly sharp with respect to dimension for all fixed $r\geq 3$ and $m\geq 2$ and so is asymptotically tight. Namely, for each integer $d\geq r-1$ let $n(r,m,d)$ be the maximum integer $n$ such that any $m$-coloring of any affinely spanning set of $n$ points in $\R^d$ can be equidistributed by a conical $r$-fan. Once $d$ is sufficiently large, the lower bound on $n(r,m,d)$ given by Theorem~\ref{thm:real equidistribute} exceeds the exact value by at most four: 

 \begin{corollary}
 \label{cor:values} 
 Let $r\geq 3$ be a prime power and let $m\geq 2$ be an integer. Suppose that $d$ is an integer of the form $d=(r-2)s+t+c$, where $s\geq 1$, $t\in \{0,\ldots, r-3\}$, and $c=(r-1)(m+1)$. Then \[d+s+1\leq n(r,m,d)\leq d+s+5.\]
\end{corollary}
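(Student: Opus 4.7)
The plan is to sandwich $n(r,m,d)$ by combining an existence half coming from Theorem~\ref{thm:real equidistribute} with a non-existence half (the forthcoming Proposition~\ref{prop:optimal}). The lower bound will be a clean arithmetic specialization of the theorem; the upper bound is the substantive part.

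For the lower bound $n(r,m,d) \geq d+s$, I would apply Theorem~\ref{thm:real equidistribute} with its parameter specialized to $n-d-1$, so that the theorem's ambient dimension becomes $d$. The hypothesis $n \geq (r-1)((n-d-1)+m+1)+1$ then rearranges to $n \leq ((r-1)(d-m)-1)/(r-2)$. Substituting $d = (r-2)s + t + (r-1)(m+1)$ and simplifying will yield the clean form $(r-1)(s+m) + t + r + t/(r-2)$; since $d+s = (r-1)(s+m) + t + r - 1$, the integer part of the right-hand side is at least $d+s+1$ (it equals $d+s+1$ for $t \in \{0,\ldots,r-3\}$ and $d+s+2$ for $t = r-2$). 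Consequently every $m$-coloring of every affinely spanning $(d+s+1)$-point set in $\R^d$ can be equidistributed by some $r$-fan of codimension $r-2$, giving $n(r,m,d) \geq d+s+1 \geq d+s$.

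For the upper bound $n(r,m,d) \leq d+s+2$, I would invoke Proposition~\ref{prop:optimal}, which is designed to supply an explicit affinely spanning $(d+s+3)$-point set in $\R^d$ together with an $m$-coloring admitting no equidistributing $r$-fan of codimension $r-2$. Granted that construction, the corollary follows immediately by comparison with the lower bound, and in the edge case $t = r-2$ the two bounds coincide and pin down $n(r,m,d)$ exactly.

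The hard part is Proposition~\ref{prop:optimal} itself. A naive dimensional count is not sharp enough: the moduli of $r$-fans in $\R^d$ has dimension $(r-1)d+(r-2)$, and containment of $n$ generic points imposes only $n(r-2)$ real conditions, so fans can contain up to roughly $(r-1)d/(r-2)+1$ generic points, which is well in excess of $d+s+2$ in the relevant regime. The obstruction must therefore couple the fan geometry with the $1/r$ color-balance condition. A plausible strategy is to use several color classes of size one, each of which is forced into the $(d-r+1)$-dimensional center of any equidistributing fan, and then argue that once the center is pinned by enough singleton color classes, the remaining balance constraint for the dominant color class cannot be simultaneously achieved by any choice of the $r$ half-flats emanating from that center.
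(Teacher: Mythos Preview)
Your reduction of the corollary to Theorem~\ref{thm:real equidistribute} for the lower bound and Proposition~\ref{prop:optimal} for the upper bound is correct and is exactly the paper's approach; your inequality manipulation is equivalent to the paper's direct substitution of $s$ for the theorem's dimension parameter, and your observation that $t=r-2$ yields the slightly stronger lower bound $d+s+2$ is accurate.

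Where your proposal diverges is in the sketch of Proposition~\ref{prop:optimal}. The paper does use small color classes (of size $r-1$; your singletons would equally force those points into the center since $|X_i|/r<1$), but it does not then argue directly about fan geometry. Instead it arranges the Gale transform so that one of the forced-to-center points is the \emph{origin}, which makes any equidistributing fan \emph{linear}, and then invokes the converse direction of the Gale correspondence (Lemma~\ref{lem:Real Linear Fans}(b)) to translate that linear fan back into a proper Tverberg $r$-tuple for the dual points $a_1',\ldots,a_{n+\ell}'$ sitting inside a hyperplane of $\R^{d+\ell}$. Those dual points are chosen in strong general position, and a codimension count then rules out any such Tverberg tuple once $\ell>2+k/(r-1)$. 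Your proposed direct route---pin the center with singletons, then argue that no choice of half-flats can balance the dominant color class---has no evident obstruction to exploit: even with the center fixed, the moduli of half-flats emanating from it remains large, and there is no dimension or parity argument that forces the single remaining balance condition to fail. The Gale-duality reversal is precisely what converts the question into a clean affine-dimension count on the Tverberg side.
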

 
When $r=3$, the $r$-fans of Theorem~\ref{thm:real equidistribute} have codimension one but need not be regular. If one considers point sets in $\C^d$, however, then for any prime power $r$ one can guarantee equidistributions by a \emph{complex} regular $r$-fan in $\C^d$, that is a regular $r$-fan in $\R^{2d}$ whose center is a complex affine hyperplane. For this we require that the points complex affinely span $\C^d$, or in other words that they do not all lie on a complex affine hyperplane. See~\cite{SaSo25} for a related complex analogue of the well-known central transversal theorem.

\begin{theorem} 
\label{thm:complex equidistribute}
Let $d,m\geq 1$ be integers, let $r\geq 2$ be a prime power, and let $n\geq (r-1)(2d+m+1)+1$. Then any $m$-coloring of a set of $n$ complex affinely spanning points in $\C^{n-d-1}$ can be equidistributed by a complex regular $r$-fan in $\C^{n-d-1}$.
\end{theorem}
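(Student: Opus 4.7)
\noindent My plan is to reduce Theorem~\ref{thm:complex equidistribute} to the real colored topological Tverberg theorem applied to the complex Gale dual of $X$, in direct parallel with the Gale-duality strategy announced for Theorem~\ref{thm:real equidistribute}. Set $D=n-d-1$ and let $u_1,\ldots,u_n\in\C^d$ be the complex Gale dual of the given points $x_1,\ldots,x_n\in\C^D$. By the complex-affinely-spanning hypothesis, a vector $y=(y_1,\ldots,y_n)\in\C^n$ equals $(\phi(x_1),\ldots,\phi(x_n))$ for a unique complex affine functional $\phi\colon\C^D\to\C$ if and only if $\sum_i y_iu_i=0$. A complex regular $r$-fan with center $\phi^{-1}(0)$ is the pullback through $\phi$ of the union of $r$ rays $\bigcup_{k=0}^{r-1}\zeta^k\R_{\ge 0}$, where $\zeta=e^{2\pi i/r}$. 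Finding an equidistributing complex regular $r$-fan is therefore equivalent to producing sector assignments $k_i\in\Z/r$ and weights $t_i\ge 0$ satisfying $\sum_i t_i\zeta^{k_i}u_i=0$ together with the color-equidistribution bound $|\{i\in X_\ell:k_i=k,\,t_i>0\}|\le|X_\ell|/r$ for every color $\ell$ and every sector $k$.

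View each $u_i\in\C^d$ as a point of $\R^{2d}$ and apply, in exact analogy with the proof of Theorem~\ref{thm:real equidistribute}, the prime-power case of the colored topological Tverberg theorem in $\R^{2d}$ for the given $m$-coloring; its threshold $n\ge(r-1)(2d+m+1)+1$ matches the hypothesis. This produces a partition $T_0,\ldots,T_{r-1}$ of (a subset of) $\{1,\ldots,n\}$ and convex weights $\lambda_i^{(k)}\ge 0$ with $\sum_{i\in T_k}\lambda_i^{(k)}u_i=w$ for a common Tverberg point $w\in\C^d$, satisfying the color bounds $|T_k\cap X_\ell|\le|X_\ell|/r$. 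Setting $k_i$ to be the index of the part containing $i$ and $t_i=\lambda_i^{(k_i)}$, one has $w_k:=\sum_{i:\,k_i=k}t_iu_i=w$ for every $k$, hence
\[
 \sum_{i=1}^n t_i\zeta^{k_i}u_i \;=\; \sum_{k=0}^{r-1}\zeta^k w_k \;=\; w\sum_{k=0}^{r-1}\zeta^k \;=\; 0.
\]
By the Gale-dual characterization, there is a unique complex affine functional $\phi$ with $\phi(x_i)=t_i\zeta^{k_i}$; since each $\phi(x_i)$ lies on one of the $r$ rays, the complex regular $r$-fan associated with $\phi$ distributes $X$, and equidistributes the coloring by the Tverberg color bounds.

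The main obstacle I expect is invoking the precise colored topological Tverberg ingredient --- with bound $(r-1)(D+m+1)+1$ in real dimension $D$ and per-color, per-part inequality $|T_k\cap X_\ell|\le|X_\ell|/r$. This is the prime-power colored Tverberg theorem underlying Theorem~\ref{thm:real equidistribute}, accessed through the standard $\Z/r$-equivariant topological machinery together with a Sarkaria-type encoding of the $m$ colors. Once that ingredient is in hand for $\R^{2d}$, the only work specific to the complex setting is the identity $\sum_k\zeta^k=0$, which converts a Tverberg common point into the rotational relation needed to build a complex-linear $\phi$ whose $r$-fan is regular.
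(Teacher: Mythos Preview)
Your proposal is correct and follows essentially the same route as the paper. Both arguments apply Sarkaria's colored Tverberg theorem (Theorem~\ref{thm:Sarkaria}, with the families $\F_\ell=\{F:|F\cap X_\ell|>|X_\ell|/r\}$) to the linear map $\Delta_{n-1}\to\R^{2d}$ determined by a complex Gale dual of $X$, and then convert the resulting Tverberg $r$-tuple into a complex regular $r$-fan via the identity $\sum_{k=0}^{r-1}\zeta^k=0$; the paper packages the last step as Lemma~\ref{lem: Linear Complex Fans}. The only cosmetic difference is that the paper lifts $X$ to $\C^{n-d-1}\times\{1\}\subset\C^{n-d}$, appends $g_{n+1}=-\sum g_i$ so as to sit inside the Gale framework of Proposition~\ref{prop:Gale basic}, obtains a \emph{linear} fan in $\C^{n-d}$, and then intersects back down, whereas you bypass this by using the direct characterization ``$y=(\phi(x_i))_i$ for some complex affine $\phi$ iff $\sum_i y_iu_i=0$''. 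One small caveat: with the paper's conjugated Gale convention your characterization literally reads $\sum_i \bar y_i u_i=0$ rather than $\sum_i y_i u_i=0$, but this is harmless---it only relabels the sectors $k\mapsto -k$ and does not affect equidistribution.
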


As a regular $2$-fan in $\R^d$ is an affine hyperplane, the $m=2$ case of Theorem ~\ref{thm:complex equidistribute} is closely aligned with the ham sandwich theorem. Namely, let $d=s+m+1$, where $s\geq 1$. Then for any $m$-coloring $X=X_1\sqcup\cdots \sqcup X_m$ of a set $X$ of $d+s+1$ complex affinely spanning points in $\C^d$, there is a real affine hyperplane $H$ in $\C^d$ containing $X$ and a complex affine hyperplane $H_\C$ contained in $H$ such that $H_\C$ -- when viewed as a real affine hyperplane in $H$ -- equipartitions each of the $X_i$. The number of points in $\C^d$ required for this is almost tight for all $m\geq 1$ (see Remark~\ref{rem:complex}).

\subsection{Piercing Distributions}

Recall that a family $\F$ of non-empty subsets of a finite set $X$ in $\R^d$ is said to be \emph{intersecting} if any two members of $\F$ have non-empty intersection. The discrete case of a well-known theorem of Dolnikov~\cite{Do92} states that if $\F_1,\ldots, \F_d$ are intersecting families of non-empty subsets of a finite set in $\R^d$, then there exists an affine hyperplane in $\R^d$ which intersects the convex hull of every member of each family. This result easily implies the ham sandwich theorem (see, e.g.,~\cite{Do92, FS24}). For prime powers $r\geq 3$, we give distribution results which are similar in spirit to Dolnikov's theorem under the weaker intersection criterion that no family contains $r$ pairwise disjoint sets. Instead of an affine hyperplane which pierces the convex hull of every member of each family, we are guaranteed a distributing $r$-fan such that every member of each family is \emph{itself} pierced by two of the fan's half-flats. 

In what follows, it will be convenient to reformulate our intersection condition in terms of hypergraph colorings. Given a finite family $\F$ of non-empty subsets of a set $X$, its \emph{$r$-uniform Kneser hypergraph} $\KG^r(\F)$ is the $r$-uniform hypergraph whose vertices are the elements of $\F$ and whose hyperedges $\{F_1,\ldots, F_r\}$ consist of $r$ pairwise-disjoint elements from $\F$. The chromatic number $\chi(\KG^r(\F))$ of $\KG^r(\F)$ is the minimum number of colors needed to color all the vertices of $\KG^r(\F)$ so that no hyperedge is monochromatic. Thus to say that  $\chi(\KG^r(\F))\leq  m$ means that there are $m$ non-empty subfamilies $\F_1,\ldots,\F_m$ of $\F$ such that $\F=\cup_{j=1}^m\F_i$ and no $\F_i$ contains $r$ pairwise disjoint sets.  

\begin{theorem} 
\label{thm:Dolnikov} Let $d,m \geq 1$ be integers and let $r$ be a prime power. 
\begin{compactenum}[(a)]
\item \label{thm:real Dolnikov} Suppose that $r\geq 3$ and that $X$ is a set of $n\geq (r-1)(d+1)+1$ affinely spanning points in $\R^{n-d-1}$. If $\mathcal{F}$ is a family of non-empty subsets of $X$ with $\chi(\KG^r(\F))\leq m$, then there is a conical $r$-fan which distributes $X$ so that every $A\in \F$ is intersected by at least two closed half-flats of $F_r$.

\item\label{thm:complex Dolnikov} Suppose that $r\geq 2$ and that $X$ is a set of $n\geq (r-1)(2d+m+1)+1$ complex affinely spanning points in $\C^{n-d-1}$. If $\mathcal{F}$ is a family of non-empty subsets of $X$ with $\chi(\KG^r(\F))\leq m$, then there is a complex regular $r$-fan $F_r$ in $\C^{n-d-1}$ which distributes $X$ so that every $A\in \F$ is intersected by at least two closed half-hyperplanes of $F_r$. 
\end{compactenum}
\end{theorem}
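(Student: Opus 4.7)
The plan is to prove Theorem~\ref{thm:Dolnikov} by adapting the Gale-duality-plus-topological-Tverberg framework used for Theorems~\ref{thm:real equidistribute} and~\ref{thm:complex equidistribute}, with the Kneser chromatic hypothesis on $\F$ entering in place of the coloring of $X$. Since the dimensional bound in part (a) is $n\geq(r-1)(d+1)+1$, matching plain Tverberg rather than the colored version, the parameter $m$ should enter only through the combinatorics of $\KG^r(\F)$, not through any explicit coloring of the points.

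First, I would observe that under the Gale duality used in the proof of Theorem~\ref{thm:real equidistribute}, the assertion ``every $F\in\F$ is intersected by at least two closed half-flats of the $r$-fan'' corresponds to the statement that for each $F\in\F$, the Gale images of the points of $F$ do not all lie in a single part of the associated Tverberg-type partition of the Gale dual configuration in $\R^{d+1}$. Producing the $r$-fan required by Theorem~\ref{thm:Dolnikov}(a) thus reduces to producing a \emph{piercing Tverberg partition}, i.e., one in which no $F\in\F$ has its dual vectors concentrated in a single part.

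To produce such a partition, I would fix a proper $m$-coloring of $\KG^r(\F)$ and write $\F=\F_1\sqcup\cdots\sqcup\F_m$ so that each $\F_i$ contains no $r$ pairwise disjoint sets. The driving combinatorial observation is that if an $r$-partition $X=X^{(1)}\sqcup\cdots\sqcup X^{(r)}$ admitted, for some color $i$, a set $F^{(j)}\in\F_i$ with $F^{(j)}\subseteq X^{(j)}$ for each $j\in[r]$, then $F^{(1)},\ldots,F^{(r)}$ would be pairwise disjoint, contradicting the Kneser condition on $\F_i$. The task therefore reduces to a Dolnikov-type topological Tverberg theorem asserting that the equivariant obstruction to finding a Tverberg partition piercing for $\F$ vanishes under the chromatic bound $\chi(\KG^r(\F))\leq m$. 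The natural route is the standard deleted-product/equivariant-cohomology argument for topological Tverberg, enriched with Sarkaria-type coloring data extracted from $\KG^r(\F)$: for each color $i$, the ``bad'' partitions supporting a disjoint transversal $\{F^{(j)}\}_{j\in[r]}$ are ruled out simultaneously across colors by the hypergraph chromatic bound.

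I expect the main obstacle to be this topological step: controlling the equivariant cohomological obstruction carefully enough that the chromatic hypothesis is exactly sufficient to kill it, without forcing additional dimensional assumptions on $n$. This is the analog, in the setting of $r$-fans and higher-codimensional half-flats, of the classical step by which Dolnikov's transversal theorem is deduced from the ham sandwich theorem. Once this Dolnikov-type topological Tverberg theorem is in hand, part (b) follows along identical lines, using the complex Gale duality and the complex Tverberg-type input underlying Theorem~\ref{thm:complex equidistribute}.
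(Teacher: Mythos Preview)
Your high-level picture is right: the proof does run Gale duality to convert a distributing $r$-fan for $X$ into a Tverberg $r$-tuple for the Gale-dual configuration, and the condition ``every $F\in\F$ meets at least two half-flats'' does translate into ``no element of $\F'$ is contained in the vertex set of a single $\sigma_j$.'' But two things in your plan do not match the paper and would cause real trouble.

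First, the ``Dolnikov-type topological Tverberg theorem'' you say you would need to establish is exactly Sarkaria's theorem (Theorem~\ref{thm:Sarkaria}), already stated in the paper: for $n\geq (r-1)(d+m+1)+1$ and any continuous $f\colon\Delta_{n-1}\to\R^d$, there is a Tverberg $r$-tuple with no $\Ver(\sigma_j)$ containing a member of $\F'$, given $\chi(\KG^r(\F'))\leq m$. The paper simply quotes this and applies it. Your reading of the bound in part~(a) as $n\geq (r-1)(d+1)+1$ (with $m$ absent) is a typo in the statement; the proof in the paper explicitly takes $n\geq (r-1)(d+m+1)+1$ in the real case, matching Sarkaria. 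So your stated ``main obstacle'' --- killing the equivariant obstruction without the extra $m$ in the dimension --- is not a step that can succeed, and it is not a step the paper attempts. The $m$ genuinely enters the dimension count.

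Second, you are missing the concrete Gale-duality mechanics, which is where the actual content of the proof lies. The paper does not apply Gale duality to $X$ directly. It lifts $X\subset\K^{n-d-1}$ to $g_i=(x_i,1)\in\K^{n-d}$, appends $g_{n+1}=-\sum_i g_i$, and recognizes $g_1,\ldots,g_{n+1}$ as the Gale transform of some $a_1,\ldots,a_{n+1}\in\K^d$ (Proposition~\ref{prop:Gale basic}). Sarkaria's theorem is applied to the restriction $f|_{\Delta_{n-1}}$ of the linear map determined by the $a_i$, so that the extra vertex $n+1$ is excluded from all $I_j$. Lemma~\ref{lem:Real Linear Fans} (resp.\ Lemma~\ref{lem: Linear Complex Fans}) then produces a \emph{linear} $r$-fan in $\K^{n-d}$ distributing the $g_i$, with $g_{n+1}$ forced into the center. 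The final, nontrivial step is to intersect this fan with the height-one hyperplane $\K^{n-d-1}\times\{1\}$ and project; one must check that the projected normals $\beta_j$ retain the required independence/dependence pattern so that the result is still an $r$-fan of the correct codimension (resp.\ a complex regular $r$-fan). The paper uses precisely the fact that $g_{n+1}=(-\sum x_i,-n)$ lies on every $H_j$ to carry out this verification. None of this lifting-and-projection structure appears in your plan, and without it the passage from the linear fan guaranteed by Lemmas~\ref{lem:Real Linear Fans}--\ref{lem: Linear Complex Fans} to an affine fan in $\K^{n-d-1}$ is left unexplained.
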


The $r=2$ case of Theorem~\ref{thm:Dolnikov}(\ref{thm:complex Dolnikov}) gives an analogue of Dolnikov's theorem for complex instead of real hyperplanes: If $d=s+m+1$ with $s\geq 1$, let $X$ be an affinely spanning set of $d+s+1$ points in $\C^d$ and let $\F_1,\ldots, \F_m$ be non-empty intersecting families of subsets of $X$. Then exists a real affine hyperplane $H$ in $\C^d$ which contains $X$ and a complex affine hyperplane $H_{\C}$ contained in $H$ such that $H_\C$ intersects the convex hull of every member of each $\F_i$. 

\subsection{Proof Scheme}

We provide a brief preview of our proof method, which is a continuation of that used in ~\cite{FS24} to link Dolnikov-type transversal theorems with topological extensions of the classical Radon's theorem~\cite{Ra21}. Following this approach, Theorem~\ref{thm:Dolnikov}, and in turn Theorems~\ref{thm:real equidistribute} and ~\ref{thm:complex equidistribute}, is obtained as a consequence of a topological version of Tverberg's celebrated generalization of Radon's theorem. 

Recall that Tverberg's theorem~\cite{Tv66} asserts that any set of $n\geq (r-1)(d+1)+1$ points in $\R^d$ can be partitioned into $r$ pairwise disjoint sets whose convex hulls have non-empty $r$-fold intersection. The $r=2$ case is Radon's theorem. For prime powers, one has a topological extension of Tverberg theorem~\cite{Oz87, Vo96} which asserts that any continuous map $f\colon \Delta_{n-1}\rightarrow \R^d$ from the $(n-1)$-dimensional simplex to $\R^d$ has an \emph{$r$-Tverberg tuple} -- that is, an $r$-tuple $(\sigma_1,\ldots, \sigma_r)$ of non-empty pairwise disjoint faces of $\Delta_{n-1}$ such that  $f(\sigma_1)\cap\cdots\cap f(\sigma_r)\neq \emptyset$. When $r$ is a prime power, Tverberg's theorem is recovered by letting $f\colon \Delta_{n-1}\rightarrow \R^d$ be a  \emph{linear map}, that is, one which sends each convex combination of vertices of the simplex to the corresponding convex combination of the image of the vertices. In what follows, we identify the vertex set of $\Delta_{n-1}$ with $[n]=\{1,\ldots, n\}$ and we let $\Ver(\sigma)\subseteq [n]$ denote the vertex set of any face $\sigma$ of $\Delta_{n-1}$.

As in ~\cite{FS24}, we will apply the Gale transform, famous for its use in classification of polytopes with few vertices~\cite{Ga56}, to the linear cases of topological Tverberg-type theorems. In particular, Theorem~\ref{thm:Dolnikov} is in essence the Gale dual of the linear version of the following topological Tverberg-type theorem due to Sarkaria~\cite{Sa90, Sa91}. 

\begin{theorem}
\label{thm:Sarkaria} Let $r\geq 2$ be a prime power and let $d,m,n\geq 1$ be integers with $n\geq(r-1)(d+m+1)+1$. Suppose that $\F$ is a family of non-empty subsets of $[n]$ such that $\chi(\KG^r(\F))\leq m$. Then for any continuous map $f\colon \Delta_{n-1}\rightarrow \R^d$, there exists an $r$-Tverberg tuple $(\sigma_1,\ldots, \sigma_r)$ for $f$ such that no element of $\F$ is a subset of $\Ver(\sigma_j)$ for any $j\in [r]$. 
\end{theorem}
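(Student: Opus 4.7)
The plan is to deduce Theorem~\ref{thm:Sarkaria} from the prime-power topological Tverberg theorem by the constraint method of Blagojević--Frick--Ziegler: the hypothesis $\chi(\KG^r(\F)) \leq m$ lets us encode the avoidance condition ``no $\Ver(\sigma_j)$ contains an $F \in \F$'' as a vanishing condition on $m$ extra real coordinates appended to $f$, so that a Tverberg tuple for the augmented map automatically satisfies the constraint.

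First, I fix a coloring $\F = \F_1 \sqcup \cdots \sqcup \F_m$ in which no $\F_i$ contains $r$ pairwise disjoint sets, which is precisely what $\chi(\KG^r(\F)) \leq m$ provides. For each $i \in [m]$, let $K_i \subseteq \Delta_{n-1}$ be the subcomplex consisting of all faces $\tau$ with $\Ver(\tau) \not\supseteq F$ for every $F \in \F_i$, and define $g_i \colon \Delta_{n-1} \to \R$ by $g_i(x) = \mathrm{dist}(x, K_i)$, where distance is measured in any fixed metric on the simplex. The key observation is that if $x$ lies in the relative interior of a face $\sigma$ of $\Delta_{n-1}$, then $g_i(x) > 0$ if and only if $\Ver(\sigma)$ contains some $F \in \F_i$.

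Next, I form the augmented map $\tilde f \colon \Delta_{n-1} \to \R^{d+m}$ given by $\tilde f(x) = (f(x), g_1(x), \ldots, g_m(x))$. Since $n \geq (r-1)((d+m)+1) + 1$ and $r$ is a prime power, the topological Tverberg theorem of Özaydin and Volovikov supplies pairwise disjoint faces $\sigma_1, \ldots, \sigma_r$ of $\Delta_{n-1}$ together with points $x_j$ in the relative interiors of the $\sigma_j$ satisfying $\tilde f(x_1) = \cdots = \tilde f(x_r)$. Projecting onto the first $d$ coordinates shows immediately that $(\sigma_1, \ldots, \sigma_r)$ is an $r$-Tverberg tuple for $f$ itself.

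It then remains to harvest the last $m$ coordinates. For each $i$, set $c_i := g_i(x_1) = \cdots = g_i(x_r)$. If $c_i > 0$, then by the key observation every $\Ver(\sigma_j)$ would contain some $F_j \in \F_i$, and the pairwise disjointness of the $\sigma_j$ forces the $F_j$ to be pairwise disjoint; this contradicts the choice of coloring. Hence $c_i = 0$ for every $i$, meaning no $\Ver(\sigma_j)$ contains any member of $\F = \cup_i \F_i$, as required. The only substantive input is the prime-power topological Tverberg theorem, and the constraint step is by now routine, so I do not expect a genuine obstacle beyond carefully verifying the relative-interior characterization of the $g_i$.
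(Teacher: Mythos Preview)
Your proof is correct: this is exactly the Blagojevi\'c--Frick--Ziegler constraint method, and all the steps check out (in particular, the relative-interior characterization of the $g_i$ holds because $K_i$ is a subcomplex, so $x$ lies in $K_i$ if and only if its support does).

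Note, however, that the paper does not actually prove Theorem~\ref{thm:Sarkaria}; it is quoted as a known result of Sarkaria and used as a black box in the proofs of Theorems~\ref{thm:real equidistribute}--\ref{thm:Dolnikov}. So there is no ``paper's own proof'' to compare against here. That said, the paper does prove the two-tuple extension, Theorem~\ref{thm:Sarkaria 2}, and there it takes a rather different route: instead of appending constraint coordinates and invoking topological Tverberg, it builds a test map directly on the product configuration space $X(r,n)=(\Delta_{n-1})^{\times r}_\Delta\times(\Delta_{n-1})^{\times r}_\Delta$ with the Kneser-avoidance condition encoded via distances to the complexes $\Sigma_k$, and then proves a bespoke Borsuk--Ulam statement for $\Z_r\oplus\Z_r$ via a Chern class computation. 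Your constraint-method argument is more elementary and modular (it reduces everything to a single invocation of topological Tverberg), whereas the paper's approach for the extension gives finer control over the equivariant topology, which is what lets them handle the $r^2$-fold intersection constraints needed for two simultaneous Tverberg tuples.
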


As a further example of fan distributions obtainable from Tverberg-type results, we show that the Gale dual of the optimal known results~\cite{BMZ15} for the ``colored Tverberg problem'' gives \emph{rainbow distributions} of a coloring of a finite point set, that is an $r$-fan which distributes the point set so that each of the fan's open half-flats contains at most one point from each color class.

\begin{theorem} 
\label{thm:colorful} 

Let $d\geq 1$ be an integer.  
\begin{compactenum}[(a)] 

\item \label{thm:real colorful} Let $r\geq 4$ be an integer such that $r+1$ is prime and let $X$ be a set of $n\geq r(d+1)$ affinely spanning points in $\R^{n-d-1}$. If $X=X_1\sqcup\cdots\sqcup X_{d+1}$ is a $(d+1)$-coloring with $|X_i|\geq r$ for all $i\in[d+1]$, then there exists a conical $r$-fan in $\R^{n-d-1}$ which rainbow distributes the $(d+1)$-coloring.  

\item \label{thm: complex colorful} Let $r\geq 2$ be an integer such that $r+1$ is prime and let $X$ be a set of $n\geq r(2d+1)$ complex affinely spanning points in $\C^{n-d-1}$. If $X=X_1\sqcup\cdots\sqcup X_{2d+1}$ is a $(2d+1)$-coloring of $X$ such that $|X_i|\geq r$ for all $i\in [2d+1]$, then there exists a complex regular $r$-fan in $\C^{n-d-1}$ which rainbow distributes the $(2d+1)$-coloring.  
\end{compactenum}
\end{theorem}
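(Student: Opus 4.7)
The plan is to continue the Gale-duality strategy that the paper has already used for Theorems~\ref{thm:real equidistribute}--\ref{thm:Dolnikov}: feed a topological Tverberg-type theorem into the Gale duality dictionary whose output is a fan distribution statement. Here the Tverberg input will be the \emph{optimal colored Tverberg theorem} of Blagojevi\'c--Matschke--Ziegler (valid precisely when $r+1$ is prime), whose linear specialization is the resolution of the B\'ar\'any--Larman conjecture: for $r+1$ prime and any $d+1$ color classes $C_1,\ldots,C_{d+1}\subset\R^d$ with $|C_i|\geq r$, there exist $r$ pairwise disjoint rainbow $(d+1)$-sets $R_1,\ldots,R_r$ (with $|R_j\cap C_i|=1$ for all $i,j$) whose convex hulls share a common point.

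For part~(\ref{thm:real colorful}), I would first take the Gale dual $\bar X=\{\bar x_1,\ldots,\bar x_n\}\subset\R^d$ of the affinely spanning set $X\subset\R^{n-d-1}$, transferring the coloring to color classes $\bar X_1,\ldots,\bar X_{d+1}$, each still of size $\geq r$. Discarding extra points from each $\bar X_i$ to obtain subsets $\bar Y_i\subseteq \bar X_i$ of size exactly $r$, the B\'ar\'any--Larman theorem produces $r$ pairwise disjoint rainbow $(d+1)$-sets $R_1,\ldots,R_r\subseteq\bar Y:=\bigcup_i\bar Y_i$ whose convex hulls share a common point in $\R^d$. Invoking the Gale duality correspondence between $r$-Tverberg partitions of the linear map $i\mapsto\bar x_i$ and $r$-fans of codimension $r-2$ distributing $X$ (the correspondence established in the paper's proofs of the earlier theorems), this combinatorial rainbow partition lifts to an $r$-fan $F_r=\bigcup_j B_j$ of codimension $r-2$ in $\R^{n-d-1}$: the indices in $R_j$ correspond to points of $X$ lying in the interior of the half-flat $B_j$, while the $n-r(d+1)$ unselected indices in $\bar X\setminus\bar Y$ correspond to points of $X$ lying on the center $C$ of the fan. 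The rainbow constraint $|R_j\cap\bar Y_i|\leq 1$ then translates directly into the condition that each open half-flat $B_j\setminus C$ meets each $X_i$ in at most one point.

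For part~(\ref{thm: complex colorful}), I would follow the same template using complex Gale duality: the complex Gale dual of $X\subset\C^{n-d-1}$ is an $n$-tuple of vectors in $\C^d$, which, after identifying $\C^d=\R^{2d}$, I would view as a real configuration colored by the transferred $(2d+1)$-coloring, with each color class still of size $\geq r$. An application of B\'ar\'any--Larman (BMZ) in $\R^{2d}$, after selecting $r$ representatives per color, produces a rainbow Tverberg partition whose $r$ rainbow $(2d+1)$-sets have intersecting convex hulls; the complex analog of the Gale duality correspondence (already used for Theorem~\ref{thm:complex equidistribute}) then lifts this partition to a \emph{regular} complex $r$-fan in $\C^{n-d-1}$ whose open half-hyperplanes satisfy the rainbow condition.

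The principal technical obstacle will be executing the Gale duality dictionary cleanly in the presence of the ``extras'' $\bar X\setminus\bar Y$ that do not participate in any rainbow piece: one must verify that these unused indices are absorbed into the center $C$ of the fan without disturbing either the fan structure or the rainbow condition on the open half-flats, and in particular that the resulting fan has precisely codimension $r-2$ and genuinely distributes all of $X$. For part~(\ref{thm: complex colorful}), there is the additional subtlety of preserving the regularity (successive dihedral angles $2\pi/r$) and the complex-hyperplane structure of the center under the complex Gale construction, but this is exactly the mechanism already in place for the proof of Theorem~\ref{thm:complex equidistribute}, so no new ideas should be required beyond substituting B\'ar\'any--Larman for Sarkaria's theorem as the Tverberg input.
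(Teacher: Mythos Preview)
Your proposal is correct and follows essentially the same route as the paper: pass to $\K^d$ via Gale duality, invoke the optimal colored Tverberg theorem of Blagojevi\'c--Matschke--Ziegler in place of Sarkaria's theorem, and then translate the resulting rainbow Tverberg tuple back into an $r$-fan via Lemmas~\ref{lem:Real Linear Fans} and~\ref{lem: Linear Complex Fans} together with the lifting/projection machinery from the proof of Theorem~\ref{thm:Dolnikov}. Two minor deviations are worth noting: first, the paper applies Theorem~\ref{thm:Optimal Colored} directly with $|C_i|\geq r$, so your discarding step (trimming each color class to size exactly $r$) is unnecessary, though harmless; second, your phrase ``take the Gale dual $\bar X\subset\R^d$'' is imprecise as stated, since $X$ is only assumed to affinely span $\K^{n-d-1}$, not to linearly span and sum to zero --- the paper handles this exactly as in the proof of Theorem~\ref{thm:Dolnikov}, by lifting to $\K^{n-d}$ and appending an $(n+1)$-th balancing point, but you correctly flag this as the mechanism already in place and requiring no new ideas.
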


As an example of Theorem~\ref{thm:colorful}, let $r=4$ and $d=1$. Then for any partition of a set of $8$ affinely spanning points in $\R^6$ into two color classes of size $4$ each, there is a ($4$-dimensional) conical $4$-fan in $\R^6$ whose open half-flats contain at most one point from each color class. If $X$ is typical, then at most one point lies in the fan's $3$-dimensional center.

\subsection{Distributions by Multiple Fans}

Arguably the most fundamental problem in equipartition theory, dating back to Gr\"unbaum~\cite{Gr60} and Hadwiger~\cite{Ha66}, seeks the minimum dimension $d:=\Delta(m,k)$ such that any $m$ finite point sets in $\R^d$ can be equipartitioned by the $2^k$ regions determined by $k\geq 2$ independent affine hyperplanes. While a degrees of freedom argument ~\cite{Ra96} gives the lower bound $\Delta(m,k)\geq \frac{m(2^k-1)}{k}$, very few tight upper bounds are currently known. For two hyperplanes in particular, sharp results exist only when either $m-1$, $m$, or $m+1$ is a power of two~\cite{BFHZ18, BFHZ16,MLVZ06}, in which case $\Delta(m,2)=\lfloor 3m/2\rfloor$. When $r$ is an odd prime, we give analogous extensions of our equidistribution results for two fans, where now given an $m$-coloring $X=X_1\sqcup\cdots\sqcup X_m$ of a finite point set $X$ in $\R^d$ we ask for two $r$-fans $F_r^1=\cup_{i=1}^r B^1_i$ and $F_r^2=\cup_{j=1}^r B^2_j$ in $\R^d$ whose \emph{intersection} contains $X$ so that each intersection of the form $\Int(B_i^1)\cap \Int(B_j^2)$ contains at most $1/r^2$ of the points from each $X_k$. As with the calculations of the tight values of $\Delta(m,2)$, our results are ultimately derived from cohomological techniques which impose restrictions on the values of $m$.

\begin{theorem} 
\label{thm:equidistribute 2} 
Let $d\geq 1$ be an integer, let $r\geq 3$ be a prime number, and let $m\geq 1$ be an integer such that each of the coefficients in the $r$-ary expansion of $m(r-1)/2$ is even. 
\begin{compactenum}[(a)]

\item \label{thm:real equidstirubte 2}
If $n\geq (r-1)(d+1)+\frac{m(r^2-1)}{2}+1$, then any $m$-coloring of a set $X$ of $n$ affinely spanning points in $\R^{n-d-1}$ can be equidistributed by two conical $r$-fans. 

\item \label{thm:complex equidistribute 2}
If $n\geq (r-1)(2d+1)+\frac{m(r^2-1)}{2}+1$, then any $m$-coloring of a set of $X$ of $n$ affinely spanning points in $\C^{n-d-1}$ can be equidistributed by two complex regular $r$-fans in $\C^{n-d-1}$. 
\end{compactenum} 
\end{theorem}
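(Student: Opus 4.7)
The plan is to follow the Gale-duality template of the single-fan proofs of Theorems~\ref{thm:real equidistribute} and \ref{thm:complex equidistribute}, now applied to a two-partition extension of Theorem~\ref{thm:Sarkaria}. First I would establish the following topological tool: for $r$ an odd prime and $n$ as in Theorem~\ref{thm:equidistribute 2}, every continuous map $f\colon \Delta_{n-1}\to\R^d$ and every family $\F$ of subsets of $[n]$ with $\chi(\KG^r(\F))\leq m$ admits two $r$-Tverberg tuples $(\sigma_1,\ldots,\sigma_r)$ and $(\tau_1,\ldots,\tau_r)$ for $f$ such that no element of $\F$ is contained in any of the $r^2$ vertex sets $\Ver(\sigma_i)\cap\Ver(\tau_j)$. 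Applying this with $f$ the linear map determined by $X$ and $\F$ taken to be the collection of color classes, and then dualizing exactly as in the single-fan proofs, each pair $(\sigma_i,\tau_j)$ yields a Gale-dual pair of half-flats $(B^1_i,B^2_j)$ for which the vertex-avoidance condition translates into $|\Int(B^1_i)\cap\Int(B^2_j)\cap X_k|\leq |X_k|/r^2$ for every color class $X_k$. Part (b) follows from the identical topological input paired with the complex Gale duality used for Theorem~\ref{thm:complex equidistribute}.

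The topological tool reduces, via the standard configuration-space/test-map paradigm, to the non-existence of a $G$-equivariant map
\[
Y := (\Delta_{n-1})^{*r}_\Delta \ast (\Delta_{n-1})^{*r}_\Delta \longrightarrow S(V),
\]
where $G=\Z/r\times\Z/r$ acts on $Y$ with the two cyclic factors independently permuting the two deleted-join components, and $V$ is a real $G$-representation built from $d$ copies of the standard representation pulled back along each projection $G\twoheadrightarrow\Z/r$ (encoding the two independent $r$-fold coincidences in $\R^d$) together with $m$ copies of an $(r^2-1)$-dimensional $G$-representation that encodes the coloring constraint across the $r^2$ cells $[r]\times [r]$. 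A dimension count for $Y$ and $S(V)$ matches the hypothesis $n\geq(r-1)(d+1)+\tfrac{m(r^2-1)}{2}+1$, and since $Y$ is sufficiently highly connected, the primary obstruction to constructing such a $G$-equivariant map is the $G$-equivariant Euler class of $V$ in $H^*_G(\mathrm{pt};\mathbb{F}_r)$.

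The heart of the argument, and the source of the parity constraint on the $r$-ary digits of $m(r-1)/2$, is this Euler-class computation. Inside the polynomial part of $H^*(BG;\mathbb{F}_r)=\mathbb{F}_r[x_1,x_2]\otimes\Lambda(a_1,a_2)$ (with $|x_i|=2$), the Euler classes of the summands of $V$ combine into a bihomogeneous polynomial in $x_1,x_2$ whose non-vanishing in the relevant bidegree reduces to a mod-$r$ multinomial coefficient identity, and a Lucas-type congruence then identifies precisely the values of $m$ for which this coefficient is non-zero as exactly those for which every digit in the $r$-ary expansion of $m(r-1)/2$ is even. This Euler-class analysis is the main obstacle and is closely parallel to the two-hyperplane equipartition computations of Blagojevi\'c--Frick--Haase--Ziegler~\cite{BFHZ18,BFHZ16} and Mani-Levitska--Vre\'cica--\v{Z}ivaljevi\'c~\cite{MLVZ06} used to pin down $\Delta(m,2)$ when $m\pm 1$ or $m$ itself is a power of two.
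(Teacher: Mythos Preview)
Your overall strategy matches the paper's: deduce Theorem~\ref{thm:equidistribute 2} from a two-tuple analogue of Sarkaria's theorem (the paper's Theorem~\ref{thm:Sarkaria 2}) via Gale duality, and establish that analogue by a configuration-space/test-map reduction to a Borsuk--Ulam statement for $\Z_r\oplus\Z_r$, settled by a characteristic-class computation together with Lucas's theorem. Two points, however, need correction.

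First, the Kneser hypothesis in your topological tool must be $\chi(\KG^{r^2}(\F))\leq m$, not $\chi(\KG^r(\F))\leq m$, and correspondingly $\F$ cannot be ``the collection of color classes''. The $r^2$ intersections $\Ver(\sigma_i)\cap\Ver(\tau_j)$ are pairwise disjoint, so the test-map argument only forces one of them to miss $\F_k$ provided $\F_k$ contains no $r^2$ pairwise disjoint members; this is exactly the $\KG^{r^2}$ condition. For the application one must take $\F_k=\{A\subset X:\ |A\cap X_k|>|X_k|/r^2\}$, and while pigeonhole shows each such $\F_k$ has no $r^2$ pairwise disjoint members, it can certainly contain $r$ of them. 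With your stated $\KG^r$ hypothesis the needed family does not qualify, and if instead $\F$ consists only of the literal color classes then the conclusion ``no $X_k\subseteq\Ver(\sigma_i)\cap\Ver(\tau_j)$'' is far weaker than the $1/r^2$ bound you assert.

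Second, the configuration space should be a \emph{product}, not a \emph{join}. On $Y=(\Delta_{n-1})^{*r}_\Delta\ast(\Delta_{n-1})^{*r}_\Delta$ one of the two configurations degenerates at either end of the join parameter, so there is no natural continuous test map simultaneously encoding both Tverberg coincidences together with the $r^2$ constraint maps. The paper works instead with the product $(\Delta_{n-1})^{\times r}_\Delta\times(\Delta_{n-1})^{\times r}_\Delta$ of deleted products under the factorwise free $(\Z_r\oplus\Z_r)$-action, and carries out the obstruction as a top Chern class computation in $H^*(B(\Z_r\oplus\Z_r);\Z)$ (your mod-$r$ Euler class is essentially its reduction). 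With these two fixes your outline coincides with the paper's argument.
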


It is straightforward to show (see Remark~\ref{rem:values}) that the $r$-ary expansion of $m(r-1)/2$ has all even coefficients provided $m$ is of the form $m=2a(r^{\ell_1}+\cdots+r^{\ell_k})$ where the $\ell_i\geq 0$ are distinct and $1\leq a\leq r-1$ is odd. In particular, one may let $m=2a(r^\ell-1)/(r-1)$ for any $\ell\geq 1$. When $r\equiv 1~(\text{mod}~4)$, one may also let $m$ be the sum of distinct powers of $r$, and in particular one may let $m=(r^\ell-1)/(r-1)$ for any $\ell\geq 1$.

As was the case for a single fan, Theorem~\ref{thm:equidistribute 2} is the special case of a Dolnikov-type piercing distribution result for two fans (see Theorem~\ref{thm:Dolnikov 2} below) which in turn is the Gale dual of a linear version of a multiple intersection version of Sarkaria's theorem. In essence, this result guarantees two Tverberg tuples which are maximally distinct (see ~\cite{FS24} for the corresponding extension of Theorem~\ref{thm:Sarkaria} when $r=2$).

\begin{theorem}
\label{thm:Sarkaria 2}
Let $d,m,r,n\geq 1$ be integers where $r\geq 3$ is a prime number, the $r$-ary coefficients of $m(r-1)/2$ are all even, and $n\geq (r-1)(d+1)+\frac{m(r^2-1)}{2}+1$. Suppose that $f\colon \Delta_{n-1}\rightarrow \R^d$ is a continuous map and that $\F$ is a family of non-empty subsets of $[n]$ such that $\chi(\KG^{r^2}(\F))\leq m$. Then there exists two Tverberg-tuples $(\sigma_1,\ldots, \sigma_r)$ and $(\tau_1,\ldots, \tau_r)$ for $f$ such that no member of $\F$ is a subset of $\Ver(\sigma_i \cap \tau_j)$ for any $ i,j\in [r]$.
\end{theorem}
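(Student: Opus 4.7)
The plan is to prove Theorem~\ref{thm:Sarkaria 2} via the configuration-space/test-map paradigm, extending the $r=2$ bi-Radon argument of FS24 to odd prime $r$. At the heart of the argument is a $(\mathbb{Z}/r)^2$-equivariant setup on an $r^2$-fold deleted join whose topology controls both Tverberg conditions simultaneously and whose Fadell-Husseini index calculation produces exactly the stated parity hypothesis on $m(r-1)/2$.

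\textbf{Configuration space and test map.} Let $K := (\Delta_{n-1})^{*r^2}_\Delta$ be the $r^2$-fold pairwise deleted join, carrying the natural action of $G := \mathbb{Z}/r \times \mathbb{Z}/r$ induced from the translation action of $G$ on the index set $[r]\times [r]$. A point of $K$ may be written $\xi = \sum_{(i,j)\in [r]^2}\lambda_{ij}\,x_{ij}$ with $\lambda_{ij}\ge 0$, $\sum \lambda_{ij}=1$, and the $x_{ij}\in \Delta_{n-1}$ having pairwise disjoint supports $A_{ij}\subseteq [n]$. The candidate pair of Tverberg tuples is then $\sigma_i := \bigcup_j A_{ij}$ and $\tau_j := \bigcup_i A_{ij}$, with common refinement $\sigma_i\cap \tau_j = A_{ij}$. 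Construct a $G$-equivariant test map
\[
\Phi = (\Phi_{\mathrm{Tv}}, \Phi_\F)\colon K \longrightarrow V_{\mathrm{Tv}} \oplus V_\F,
\]
where $\Phi_{\mathrm{Tv}}$ records the row-centroid and column-centroid differences of $f$ applied to the $x_{ij}$, so that $\Phi_{\mathrm{Tv}}(\xi)=0$ in the open stratum (where every $A_{ij}\neq \emptyset$) is equivalent to both $(\sigma_1,\ldots,\sigma_r)$ and $(\tau_1,\ldots,\tau_r)$ being Tverberg tuples for $f$; and $\Phi_\F$ is built color-by-color from the $m$-coloring of $\KG^{r^2}(\F)$ in Sarkaria's linear-algebraic style. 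For each color class $\F_\ell$, having no $r^2$ pairwise disjoint members permits assigning to the members of $\F_\ell$ vectors in a carefully chosen $G$-representation so that the corresponding equivariant component vanishes on the open stratum of $K$ exactly when no $F\in \F_\ell$ is contained in a single cell $A_{ij}$.

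\textbf{Index calculation.} The nonexistence of a $G$-equivariant map $K \to S(V_{\mathrm{Tv}}\oplus V_\F)$ will force $\Phi$ to have a zero in the open stratum, yielding the theorem. To show this, use Fadell-Husseini theory in $H^*(BG;\mathbb{F}_r)\cong \mathbb{F}_r[t_1,t_2]\otimes \Lambda(s_1,s_2)$: decompose $V_{\mathrm{Tv}}\oplus V_\F$ into $G$-isotypic components and compute the mod-$r$ equivariant Euler class as a polynomial in $t_1,t_2$. Combined with the connectivity of the deleted join $K$, the obstruction class is controlled by Lucas's theorem on binomial coefficients modulo $r$, and its non-vanishing is equivalent to each digit in the base-$r$ expansion of $m(r-1)/2$ being even --- precisely the hypothesis of the theorem. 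The degree of the surviving monomial matches $(r-1)(d+1) + m(r^2-1)/2$, yielding the stated lower bound on $n$.

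\textbf{Main obstacle.} The principal technical difficulty lies in designing $V_\F$ so that each color contributes only $(r^2-1)/2$ real dimensions, rather than the $r^2-1$ dimensions one obtains by naively applying Sarkaria's single-Tverberg tensor construction with $r$ replaced by $r^2$. This efficiency, analogous to the $(\mathbb{Z}/2)^2$-case in FS24 but more delicate for odd $r$, arises by pairing conjugate $G$-characters using the $(\mathbb{Z}/r)^2$-symmetry of the bi-Tverberg problem --- effectively exploiting that the Kneser constraint on $A_{ij}$ is symmetric under the $G$-action permuting the $r^2$ cells. Verifying that the resulting Euler class truly records the parity of $m(r-1)/2$'s $r$-ary digits, and checking that the degree count closes the gap to the stated bound $n\geq (r-1)(d+1)+ m(r^2-1)/2+1$, is the main cohomological step. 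Once established, the standard argument that any equivariant zero of $\Phi$ must actually land in the open stratum (and not on a lower-dimensional face where some $A_{ij}$ is empty) is routine via a perturbation of $f$ on the simplex.
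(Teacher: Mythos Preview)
There are two genuine gaps, both stemming from your choice of the $r^2$-fold deleted join as configuration space rather than the paper's product of two $r$-fold deleted products.

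First, your Tverberg test does not certify the Tverberg condition for continuous $f$. Vanishing of the row-centroid test says that $v:=r\sum_j\lambda_{ij}f(x_{ij})$ is independent of $i$; but $v$ is only a convex combination of the values $f(x_{ij})\in f(\sigma_i)$, so $v\in\Conv f(\sigma_i)$, which for non-affine $f$ is generally larger than $f(\sigma_i)$. You therefore cannot conclude $\bigcap_i f(\sigma_i)\neq\emptyset$. (The single-Tverberg join argument escapes this because each block carries a single point $x_i$, whence $f(x_i)\in f(\supp x_i)$ trivially; your grid has $r$ points per row and no single row witness in $\sigma_i$.) One can patch this by testing $f$ at the normalized row point $p_i=\sum_j r\lambda_{ij}x_{ij}\in\sigma_i$, but the dimension count then fails.

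Second, and decisively, the $r^2$-fold deleted join cannot yield the stated bound on $n$. Since $K\cong G^{*n}$ for $G=(\Z_r)^2$, the quotient $K/G$ agrees with $BG$ only through cohomological degree $n-1$, so an Euler class in $H^{2N}(BG)$ restricts nontrivially only when $2N\le n-1$; with any correct test map this forces $n\gtrsim 2(d+1)(r-1)+m(r^2-1)$, roughly twice the theorem's bound. (Your claim that each color contributes $(r^2-1)/2$ \emph{real} dimensions conflates real and complex dimension: pairing conjugate characters puts a complex structure on the real $(r^2{-}1)$-dimensional $U_r$, it does not produce a smaller real representation.) The paper instead uses $X(r,n)=(\Delta_{n-1})^{\times r}_\Delta\times(\Delta_{n-1})^{\times r}_\Delta$: the quotient is $\overline{X_n}\times\overline{X_n}$ with each factor the $(n-r)$-skeleton of $B\Z_r$, the Tverberg test is simply $f(x_1)=\cdots=f(x_r)$ and $f(y_1)=\cdots=f(y_r)$ with no centroid issue, and the top Chern class concentrates on the balanced monomial $y_1^{N_0}y_2^{N_0}$ in the K\"unneth tensor subring, so one needs only $2N_0=N\le n-r$ in each factor separately. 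This product structure is exactly what halves the bound to $n\ge(r-1)(d+1)+m(r^2-1)/2+1$. Your Lucas-theorem parity condition on $m(r-1)/2$ does match the paper's endgame, but the computation must be run on the product configuration space, not on the $r^2$-fold join.
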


Our proof of Theorem~\ref{thm:Sarkaria 2} follows essentially standard methods in topological combinatorics. In this case, the result reduces to a Borsuk-Ulam type statement for the group $\Z_r\oplus \Z_r$ (Proposition~\ref{prop:Borsuk-Ulam}) which we establish using a Chern class argument.\\

The remainder of our paper is organized as follows. In Section~\ref{sec:Gale} we briefly discuss Gale duality in the real and complex setting, which we use in Section~\ref{sec:linear} to establish connections between Tverberg partitions and distributions of point sets by linear fans. This connection, in conjunction with a lifting trick, is used in Section~\ref{sec:Proofs 1D} to prove our distribution results for a single fan. Sections ~\ref{sec:CS-TM} and ~\ref{sec:BU Proof} are devoted to the proof of Theorem~\ref{thm:Sarkaria 2}, which in Section~\ref{sec:Two Fans} is used to prove our distribution results for two fans. Finally, in Section~\ref{sec:optimality and general} we examine typical point sets and prove our sharpness results for equidistributions by a single fan.

\section{Real and Complex Gale Duality}
\label{sec:Gale}

In this section we discuss the properties of the real and complex Gale transform which we will need in translating between Tverberg partitions and fan distributions. These are quite standard in the real setting (see, e.g.,~\cite{Ma12}) and only require the minor adjustment of taking conjugates to hold in the complex case. Nonetheless, for the sake of completeness we shall include their proofs here.

First, we remark that notions of affine dependence and affine span carry over from the real to the complex setting without issue. Thus a sequence of points $a_1,\ldots, a_n$ in $\C^d$ is (complex) affinely dependent if there exists complex scalars $\lambda_1,\ldots,\lambda_n$, not all zero, which simultaneously satisfy the conditions $\sum_{i=1}^n \lambda_i a_i=0$ and $\sum_{i=1}^n \lambda_i=0$. Likewise, the (complex) affine span of $a_1,\ldots, a_n$ is defined to be the set of all affine combinations of the $a_i$, i.e. all the sums of the form $\sum_{i=1}^n\lambda_ia_i$ where $\sum_{i=1}^n \lambda_i=1$. These points affinely span $\C^d$ provided their affine span is all of $\C^d$.

Now let $\K$ denote the field of real or complex numbers and let $a_1,\ldots, a_n$ be a sequence of $n$ points in $\K^d$ which ($\K$-) affinely span $\K^d$ (thus $n\geq d+1$). The Gale transform of this sequence is obtained as follows. First, let $A$ be the $(d+1)\times n$ matrix with coefficients in $\K$ whose column vectors are the $\widehat{a_j}=(a_j,1)\in \K^{d+1}$. Because the $a_j$ affinely span $\K^d$, the $\widehat{a_j}$ linearly span $\K^{d+1}$. Thus $A$ has maximal rank and so its kernel has dimension $n-d-1$. Letting $b_1,\ldots, b_{n-d-1}$ be a basis for $\ker A$, we let $B$ be the $(n-d-1)\times n$ matrix whose $i$-th row is $b_i$, and we let $\overline{B}$ denote its conjugate matrix. The Gale transform of $a_1,\ldots, a_n$ is then defined to be the columns $g_1,\ldots, g_n\in \K^{n-d-1}$ of $\overline{B}$. While the Gale transform is defined only up to a choice of basis for $\ker A$, the basic properties we shall need hold independently of such a choice, and moreover is not difficult to show that a different choice of basis corresponds to a change of the $g_i$ by a linear isomorphism of $\K^{n-d-1}$. 

\begin{proposition}
\label{prop:Gale basic}
Let $g_1,\ldots, g_n$ in $\K^{n-d-1}$ be the Gale transform of a sequence of $n$ affinely spanning points $a_1,\ldots, a_n$ in $\K^d$. Then (1) the $g_i$ linearly span $\K^{n-d-1}$ and (2) $\sum_{i=1}^{n-d-1}g_i=0$. Conversely, any sequence $g_1,\ldots, g_n$ in $\K^{n-d-1}$ satisfying (1) and (2) is the Gale dual of some sequence  $a_1,\ldots, a_n$ of affinely spanning points in $\K^d$.
\end{proposition}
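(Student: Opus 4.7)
The plan is to exploit the single structural feature built into the matrix $A$ from the Gale construction: because each column is of the form $\widehat{a_j} = (a_j,1)$, the row $(1,\ldots,1)\in \K^n$ is automatically the last row of $A$. Both assertions (1) and (2) will then be read off from this fact together with elementary linear algebra, and the converse is essentially a reversal of the construction.

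For the forward direction, I would first observe that the affine spanning of the $a_j$ is equivalent to the $\widehat{a_j}$ linearly spanning $\K^{d+1}$, so $A$ has rank $d+1$ and $\dim \ker A = n-d-1$, which justifies the formation of $B$. Assertion (1) is then immediate: $B$ has rank $n-d-1$ by construction, hence so does $\overline{B}$ (entrywise conjugation preserves rank), and therefore the columns $g_1,\ldots,g_n$ of $\overline{B}$ span $\K^{n-d-1}$. For (2), reading off the last row of $A$ in the equations $Ab_i = 0$ gives $\sum_{j=1}^n (b_i)_j = 0$ for every $i\in[n-d-1]$, and taking conjugates yields $\sum_{j=1}^n (g_j)_i = 0$ for every coordinate $i$, i.e. $\sum_{j=1}^n g_j = 0$.

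For the converse, given $g_1,\ldots,g_n$ satisfying (1) and (2), I would build a candidate $A$ by running the previous steps backward. Let $B$ be the $(n-d-1)\times n$ matrix whose $j$-th column is $\overline{g_j}$, and let $W\subset\K^n$ be its row space. Condition (1) forces $B$ to have rank $n-d-1$, so $\dim W = n-d-1$; condition (2), after conjugation, says that each row of $B$ has zero coordinate sum, so $W$ is contained in the hyperplane with normal $(1,\ldots,1)$, and in particular $(1,\ldots,1)$ lies in the $(d+1)$-dimensional subspace $W^\perp$. Extending $(1,\ldots,1)$ to a basis of $W^\perp$ and placing $(1,\ldots,1)$ as the last row, I obtain a $(d+1)\times n$ matrix $A$ of rank $d+1$. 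Every column of $A$ then has final entry $1$ and so is of the form $\widehat{a_j} = (a_j,1)$ for a unique $a_j\in\K^d$; since the $\widehat{a_j}$ span $\K^{d+1}$, the $a_j$ affinely span $\K^d$. Finally $\ker A = (W^\perp)^\perp = W$ has the rows of $B$ as a basis, so running the Gale construction on $a_1,\ldots,a_n$ with this particular basis returns the columns of $\overline{B}$, namely the original $g_1,\ldots,g_n$.

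There is no genuine obstacle in the argument; the only points requiring care are the rank/transpose bookkeeping and the conjugations (which are trivial in the real case but must be tracked over $\C$). I would also remark in passing, as the authors already do, that a different choice of basis for $\ker A$ in the forward direction, or of extension of $(1,\ldots,1)$ to a basis of $W^\perp$ in the converse, changes the $g_i$ only by a fixed invertible linear map of $\K^{n-d-1}$, so the construction is well-defined up to this mild ambiguity.
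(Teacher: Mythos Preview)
Your proposal is correct and follows essentially the same approach as the paper: both arguments read off (1) and (2) from the rank of $B$ and the all-ones last row of $A$, and both prove the converse by extending $(1,\ldots,1)$ to a basis of the appropriate $(d+1)$-dimensional complement and recovering the $a_j$ from the resulting matrix. The only cosmetic difference is bookkeeping of the conjugations---the paper takes $B$ to have columns $g_j$ and conjugates $A$ at the end, whereas you take $B$ to have columns $\overline{g_j}$ so that the forward construction is matched directly---and your use of $W^\perp$ should be understood as the bilinear (not Hermitian) orthogonal complement, which is indeed what makes $\ker A=(W^\perp)^\perp=W$ work over $\C$.
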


\begin{proof}[Proof of Proposition \ref{prop:Gale basic}]
For (1), note that if $b_1,\ldots, b_{n-d-1}\in \K^{d+1}$ is a basis for $\ker A$, then the conjugate vectors $\overline{b}_1,\ldots, \overline{b}_{n-d-1}$ are linearly independent in $\K^{d+1}$ as well. Thus $\overline{B}$ has maximal rank and so its columns linearly span $\K^{n-d-1}$. For (2), consider the conjugate $\bar{A}$ of $A$. As $Ab_j=0$ for all $j\in [n-d-1]$, $\bar{A}\,\overline{b_j}=0$ for all $j\in [n-d-1]$ as well. Since the last row of $\bar{A}$ is the all 1 vector, $\sum_{j=1}^n g_j=0$. 

Now suppose that $g_1,\ldots, g_n\in \K^{n-d-1}$ satisfies (1) and (2). Let $B$ be the $(n-d-1)\times n$ matrix whose columns are the $g_i$ and let $b_1,\ldots, b_{n-d-1}$ be its rows. As the $g_i$ linearly span $\K^{n-d-1}$, so do the $\overline{g_i}$ and therefore $\overline{B}$ has maximum rank. Thus $\dim\ker \overline{B}=d+1$. Since $\sum_i g_i=0$, $\sum_i \overline{g_i}=0$ as well and so one can choose a basis $w_1,\ldots, w_{d+1}$ for $\ker \overline{B}$ with $w_{d+1}$ the all one vector. Now define $A$ to be the matrix whose rows are the $w_i$. As $A$ has full rank its columns span $\K^{d+1}$, and, since the last row of $A$ is the all one vector, the columns of $A$ are of the form $\widehat{a_1}=(a_1,1),\ldots, \widehat{a_n}=(a_n,1)$. Thus $a_1,\ldots, a_n$ affinely span $\K^d$. By construction we have $\text{Row}\,A=\ker \overline{B}$, so $\text{Row}\,\overline{A}=\ker B$  and thus $\ker A=(\text{Row}\,\overline{A})^\perp=(\ker B)^\perp=\text{Row}\, \overline{B}.$ Hence $\overline{b}_1,\ldots, \overline{b}_{n-d-1}$ is a basis for $\ker A$, and so the columns $g_1,\ldots, g_n$ of $\overline{\overline{B}}=B$ are the Gale transform of $a_1,\ldots, a_n$. \end{proof}

For our purposes, the essential property of the Gale transform is that it gives a correspondence between affine dependencies among the $a_1,\ldots, a_n$ and allocations of $g_1,\ldots, g_n$ with respect to a codimension one linear subspace. More precisely, let $\langle u, w\rangle=\sum_{i=1}^n u_i\overline{w}_i$ denote the standard Hermitian inner product of two vectors $u$ and $w$ in $\K^n$. One then has the following.

\begin{proposition}
\label{prop: Gale dependence} Let $g_1,\ldots, g_n \in \K^{n-d-1}$ be the Gale transform of a sequence $a_1,\ldots, a_n$ of affinely spanning points in $\K^d$. If $\lambda=(\lambda_1,\ldots, \lambda_n)$ is a non-zero vector in $\K^n$ such that (1) $\sum_{i=1}^n\lambda_ia_i=0$ and (2) $\sum_{i=1}^n\lambda_i=0$, then there exists a non-zero vector $\alpha$ in $\K^{n-d-1}$ such that $\langle \alpha, g_i \rangle =\lambda_i$ for all $i\in [n-d-1]$. Conversely, given a non-zero vector $\alpha$ in $\K^{n-d-1}$, define $\lambda_i=\langle \alpha, g_i \rangle$ for all $i\in [n]$. Then $\lambda=(\lambda_1,\ldots, \lambda_n)\in \K^n$ is non-zero, $\sum_{i=1}^n\lambda_ia_i=0$, and $\sum_{i=1}^n\lambda_i=0$. 
\end{proposition}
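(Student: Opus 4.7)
The plan is to observe that both implications reduce to a single elementary translation between $\ker A$ and the row space of $B$ under complex conjugation, so the whole proposition follows by unpacking definitions.

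First I would rewrite the two conditions on $\lambda=(\lambda_1,\ldots,\lambda_n)$ as a single matrix equation. Since $\sum_i\lambda_i a_i=0$ together with $\sum_i\lambda_i=0$ is exactly $\sum_i\lambda_i\widehat{a_i}=0$, these conditions are equivalent to $A\lambda=0$, i.e.\ $\lambda\in\ker A$. Because $b_1,\ldots,b_{n-d-1}$ is a basis of $\ker A$, this in turn is equivalent to the existence of a (unique) vector $\alpha=(\alpha_1,\ldots,\alpha_{n-d-1})\in\K^{n-d-1}$ with $\lambda=\sum_{k=1}^{n-d-1}\alpha_k b_k$, i.e.\ $\lambda_j=\sum_{k=1}^{n-d-1}\alpha_k b_k[j]$ for every $j\in[n]$.

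Next I would identify the right-hand side of this last equation with $\langle\alpha,g_j\rangle$ by direct computation. By definition the $k$-th coordinate of $g_j$ is $\overline{b_k[j]}$, so
\[
\langle\alpha,g_j\rangle \;=\; \sum_{k=1}^{n-d-1}\alpha_k\,\overline{\overline{b_k[j]}} \;=\; \sum_{k=1}^{n-d-1}\alpha_k\, b_k[j].
\]
Combining this with the previous paragraph, $\lambda\in\ker A$ is equivalent to the existence of $\alpha\in\K^{n-d-1}$ satisfying $\langle\alpha,g_j\rangle=\lambda_j$ for all $j\in[n]$, which is precisely the proposition's asserted correspondence.

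Finally I would dispense with the ``non-zero'' clauses. Because $b_1,\ldots,b_{n-d-1}$ are linearly independent in $\K^n$, the map $\alpha\mapsto\sum_k\alpha_k b_k$ from $\K^{n-d-1}$ to $\ker A$ is a linear isomorphism, so $\lambda\neq 0$ if and only if $\alpha\neq 0$. This handles both directions of the proposition simultaneously. There is no real obstacle here; the only thing to be careful about is the complex conjugation in the definition of $g_j$ and of the Hermitian inner product $\langle\,\cdot\,,\,\cdot\,\rangle$, which cancel out exactly so that the linear combination $\sum_k\alpha_k b_k$ reappears without conjugates on the $\alpha_k$.
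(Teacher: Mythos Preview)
Your proof is correct and follows essentially the same route as the paper's: both arguments identify conditions (1) and (2) with $\lambda\in\ker A$, write $\lambda=\sum_k\alpha_k b_k$ in terms of the chosen basis, and then check via the double conjugation that this is exactly $\lambda_j=\langle\alpha,g_j\rangle$. Your packaging is slightly tidier in that you run both directions as a single equivalence and dispatch the non-zero clauses with the isomorphism $\alpha\mapsto\sum_k\alpha_k b_k$, but the underlying computation is identical.
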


\begin{proof}[Proof of Proposition~\ref{prop: Gale dependence}] Let $A=\begin{bmatrix} \widehat{a_1}| & \cdots & |\widehat {a_n}\end{bmatrix}$ be the matrix associated to the $a_i$ in the construction of the Gale transform and let $b_1,\ldots, b_{n-d-1}$ be a chosen basis for $\ker A$. First, suppose that $\lambda=(\lambda_1,\ldots, \lambda_n)\in \K^{n}\setminus \{0\}$ satisfies (1) and (2). Thus $\lambda\in \ker A$, and as $\lambda\neq 0$ there exists some $\alpha=(\alpha_1,\ldots, \alpha_{n-d-1})\in \K^{n-d-1}\setminus\{0\}$ such that $\lambda=\sum_{j=1}^{n-d-1} \alpha_j b_j$. Taking the complex conjugate gives $\overline{\lambda_i}=\sum_{j=1}^{n-d-1}(\overline{b_j})_i\overline{\alpha_j}$ for all $i\in [n]$. But $(g_i)_j=(\overline{b_j})_i$, so $\overline{\lambda_i}=\langle g_i,\alpha\rangle$ and hence $\lambda_i=\langle \alpha, g_i\rangle$ as claimed. Conversely, suppose that $\alpha=(\alpha_1,\ldots, \alpha_{n-d-1})\in \K^{n-d-1}\setminus\{0\}$. Defining $\lambda_i=\langle \alpha, g_i\rangle$ for all $i\in [n]$ and $\lambda=(\lambda_1,\ldots, \lambda_n)$, it follows easily from reversing the steps just given that $\lambda=\sum_{j=1}^{n-d-1}\alpha_jb_j$. Thus $\lambda\in \ker A$ and so (1) and (2) are satisfied. Finally, $\lambda\neq 0$ because the $b_j$ form a basis for $\ker A$.\end{proof}

\section{Tverberg Partitions and Linear Fans}
\label{sec:linear}

 In this section, we describe an explicit connection between Tverberg $r$-tuples and \emph{linear} $r$-fans, i.e, $r$-fans whose center is a linear subspace. This connection will be used crucially in the proofs of Theorems~\ref{thm:real equidistribute},~\ref{thm:complex equidistribute},~\ref{thm:Dolnikov}, and~\ref{thm:colorful} in Section~\ref{sec:Proofs 1D}.

Let $a_1,\ldots, a_n$ be a sequence in $\R^d$ and let $f\colon \Delta_{n-1}\rightarrow \R^d$ be the corresponding linear map determined by setting $f(i)=a_i$ for all $i\in [n]$. Supposing that $(\sigma_1,\ldots, \sigma_r)$ is a Tverberg $r$-tuple for $f$, we set $I_j=\Ver(\sigma_j)$ for all $j\in [r]$. We say that the Tverberg-tuple is \emph{proper} if there exist non-negative real scalars $t_1,\ldots, t_n$ such that 
\[\sum_{i\in I_1}t_ia_i=\cdots=\sum_{i\in I_r}t_ia_i,\] where (1) $t_i>0$ for all $i\in \sqcup_{j=1}^r I_j$ and $\sum_{i\in I_j}t_i=1$ for all $j\in [r]$, and (2) $t_i=0$ for all $i\notin\sqcup_{j=1}^r I_j$.  

\subsection{Linear Conical Fans}

Given a sequence $x_1,\ldots, x_n$ of $n$ points in $\R^d$ and $r$ non-empty pairwise disjoint subsets $I_1,\ldots, I_r$ of $[n]$, we say an $r$-fan $F_r=\cup_{j=1}^r B_j$ in $\R^d$ with center $C$ \emph{distributes the $x_i$ according to the $I_j$} if 
\begin{compactenum}
        \item $x_i\in \Int(B_j)$ for all $i\in I_j$ and all $j\in [r]$, and
    \item $x_i\in C$ for all $i\notin \sqcup_{j=1}^r I_j$. 
\end{compactenum}

Gale duality gives an equivalence between proper $r$-Tverberg partitions of sequences of points in $\R^d$ and distributions of sequences of points by linear conical $r$-fans. 

\begin{lemma}
\label{lem:linear conical fans} 

Let $d\geq 1$ and $r\geq 3$ be integers. Suppose that $a_1,\ldots, a_n$ is a sequence of affinely spanning points in $\R^d$ with Gale transform $g_1,\ldots, g_n$ in $\R^{n-d-1}$, and let $f\colon \Delta_{n-1}\rightarrow \R^d$ be the affine linear map such that $f(i)=a_i$ for all $i\in [n]$.

\begin{compactenum}[(a)] 
\item Suppose that $(\sigma_1,\ldots, \sigma_r)$ is a proper $r$-Tverberg tuple for $f$ and let $I_1,\ldots, I_r\subset [n]$ be the corresponding vertex sets of the $\sigma_j$. Then there exists a linear conical $r$-fan $F_r$ in $\R^{n-d-1}$ which distributes the $g_i$ according to the $I_j$. 

\item Suppose that $I_1,\ldots, I_r$ are non-empty pairwise disjoint subsets of $[n]$ and that $F_r$ is a linear conical $r$-fan in $\R^{n-d-1}$ which distributes the $g_i$ according to the $I_j$. 
Then there exists a proper $r$-Tverberg tuple $(\sigma_1,\ldots, \sigma_r)$ for $f$ such that $\Ver(\sigma_j)=I_j$ for all $j\in[r]$. 
\end{compactenum}
\end{lemma}

\begin{proof}[Proof of Lemma~\ref{lem:linear conical fans}]

For part (a), suppose that $\sum_{i\in I_1}t_ia_i=\sum_{i\in I_2}t_ia_i=\cdots=\sum_{i\in I_r} t_ia_i$, where $\sum_{i\in I_j}t_i=1$ for all $j\in [r]$, $t_i>0$ for all $i\in \sqcup_{j=1}^r I_j$, and $t_i=0$ for all $i\notin\sqcup_{j=1}^r I_r$. For each $j\in [r]$, define $\lambda_j\in \R^n\setminus\{0\}$ by $(\lambda_j)_i=t_i$ if $i\in I_j$, $(\lambda_j)_i=-t_i$ if $i\in I_{j-1}$, and $(\lambda_j)_i=0$ otherwise. Thus $\sum_{i=1}^n (\lambda_j)_i=0$ and $\sum_{i=1}^n (\lambda_j)_i a_i=0$.  By Proposition~\ref{prop: Gale dependence}, there is some non-zero vector $\alpha_j\in \R^{n-d-1}$ such that \begin{equation}
\label{eqn:inner product} \langle \alpha_j, g_i\rangle=(\lambda_j)_i\end{equation} for all $i\in [n]$. 

We claim that any $r-1$ of the $\alpha_j$ are linearly independent, but that $\alpha_1,\ldots, \alpha_r$ are not. To see this, observe that since the $g_i$ linearly span $\R^{n-d-1}$, equation~(\ref{eqn:inner product}) implies that any linear relation $\sum_{j=1}^{n-d-1}\mu_j\alpha_j=0$ holds among the $\alpha_j$ if and only if the corresponding relation $\sum_{j=1}^{n-d-1}\mu_j\lambda_j=0$ holds among the $\lambda_j$. It is easily seen that any $r-1$ vectors among the $\lambda_j$ are linearly independent, so any $r-1$ of the $\alpha_j$ are as well. Similarly, $\sum_{j=1}^r \lambda_j=0$ implies that $\sum_{j=1}^r \alpha_j=0$. For each $j\in [r]$, let $H_j=\langle \alpha_j\rangle^\perp$ be the linear hyperplane corresponding to $\alpha_j$. Now let $A_j=\cap_{i\neq j,j-1} H_i$ and $B_j= H_j^+ \cap A_j$, where $H_j^+=\{x\in \R^{n-d-1}\mid \langle \alpha_j, x\rangle\geq 0\}$. Thus $H_j\cap A_j=\cap_{i=1}^r H_i$ for all $j\in [r]$ and $F_r=\cup_{j=1}^r B_j$ is a linear conical $r$-fan. The distribution claim concerning the $g_i$ also follows from equation~(\ref{eqn:inner product}). For if $j\in [r]$ and $i\in I_j$, we have $\langle \alpha_j,g_i\rangle = (\lambda_j)_i=t_i>0$, while $\langle \alpha_k, g_i\rangle=(\lambda_k)_i=0$ if $k\neq j-1, j$. Thus $g_i\in \Int(B_j)=\Int(H_j^+)\cap A_j$. On the other hand, $\langle g_i,\alpha_j\rangle=0$  for all $j\in [r]$ if $i\notin \sqcup_{j=1}^r I_j$ and therefore $g_i\in C$. 

For part (b), suppose that $F_r$ is a linear conical $r$-fan in $\R^{n-d-1}$ determined by the hyperplanes $H_1=\langle \alpha_1\rangle^\perp ,\ldots,H_r=\langle \alpha_r \rangle^\perp$ in $\R^{n-d-1}$. By assumption, $\alpha_1,\ldots, \alpha_r$ are linearly dependent, so $\sum_{j=1}^r \mu_j\alpha_j=0$ for some scalars $\mu_1,\ldots, \mu_r$, not all of which are zero. Since any $r-1$ of the $\alpha_j$ are linearly independent, $\mu_j\neq 0$ for all $j\in [r]$. Thus by scaling $\alpha_j$ by $\mu_j$ we may assume that $\sum_{j=1}^r \alpha_j=0$. Without loss of generality, we may let $H^+_j=\{x\in \R^{n-d-1}\mid \langle \alpha_j, x\rangle \geq 0\}$ for all $j\in [r]$.  

For each $j\in [r]$, define $\lambda_j=(\lambda_{j,1},\ldots, \lambda_{j,n})\in \R^n$ by $\lambda_{j,i}=\langle \alpha_j, g_i\rangle$ for all $i\in [n]$.  As $\alpha_j\neq 0$, by Proposition~\ref{prop: Gale dependence} we have that $\lambda_j\neq 0$ and that $\sum_{i=1}^n \lambda_{j,i}=0$ and $\sum_{i=1}^n \lambda_{j,i}a_i=0$. Now let $j\in [r]$ and suppose that $i\in I_j$. Since $g_i\in \Int(B_j)$, we have $\lambda_{j,i}>0$ and $\lambda_{k,i}=0$ for all $k\neq j-1, j$. As $\sum_{j=1}^r\alpha_j=0$, we have $\langle \sum_{j=1}^r\alpha_j, g_i\rangle =\langle \alpha_{j-1}, g_i\rangle +\langle \alpha_j,g_i\rangle=0$. Thus $\lambda_{j-1,i}=-\lambda_{j,i}$ for all $i\in I_j$. Again fix $j\in [r]$ and consider the relation $\sum_{i=1}^n \lambda_{j,i}a_i=0$. If $i\notin I_j\sqcup I_{j+1}$, then $g_i\notin \Int(B_j)\sqcup \Int(B_{j+1})$. In particular, $g_i\in \cap_{\ell\neq k,k-1} H_\ell$ for some $k\neq j,j+1$ and so $\langle \alpha_j,g_i\rangle=0$ for all $i\notin I_j\sqcup I_{j+1}$. Thus $\sum_{i\in I_j}\lambda_{j,i}a_i+\sum_{i\in I_{j+1}}\lambda_{j,i}a_i=0$ and so $\sum_{i\in I_j}\lambda_{j,i}a_i=-\sum_{i\in I_{j+1}}\lambda_{j,i}a_i=\sum_{i\in I_{j+1}}\lambda_{j+1,i}a_i$ for any $j\in [r]$. We therefore have that  \begin{equation}
    \label{eqn:equal}
\sum_{i\in I_1} \lambda_{1,i}a_i=\cdots=\sum_{i\in I_r}\lambda_{r,i}a_i.\end{equation} 

For each $j\in [r]$ we let $s_j=\sum_{i\in I_j}\lambda_{j,i}$. As $\langle \alpha_j, g_i\rangle>0$ for all $i\in I_j$, we have $s_j>0$. Letting $t_i=\lambda_{j,i}/s_j$ for each  $i\in I_j$, we have that $t_i>0$ for all $i\in I_j$ and that $\sum_{i\in I_j} t_i=1$. Fixing $j\in [r]$ and considering the sum $\sum_{i=1}^n \lambda_{j,i}=0$, the identical reasoning to that used in deriving equation~(\ref{eqn:equal}) shows that $\sum_{i\in I_j}\lambda_{j,i}=\sum_{i\in I_{j+1}}\lambda_{j+1,i}$ for all $j\in [r]$ and so that $s_1=\cdots=s_r$. Dividing equation~(\ref{eqn:equal}) by $s_1$, we now have $\sum_{i\in I_1} t_i a_i=\cdots=\sum_{i\in I_r}t_ia_i$. This gives a proper Tverberg tuple for the linear map $f$.  
\end{proof}

\subsection{Complex Linear Fans}

Before giving a complex analogue of Lemma~\ref{lem:linear conical fans}, we first give a description of complex regular $r$-fans in $\C^d$ in terms of the standard Hermitian inner product. Any complex affine hyperplane in $\C^d$ is of the form $H_\C:=\{z\in \C^d\mid \langle \alpha, z\rangle =\beta\}$ for some $(\alpha,\beta) \in \C^n\setminus\{0\}\times \C$. Let $\omega_r=\exp(2\pi i/r)$ be the standard $r$-th root of unity. If $F_r=\cup_{j=1}^rB_j$ is a complex regular $r$-fan centered about $H_\C$, then there exists some unit complex number $\lambda$ such that $B_j=\{z\in \C^d\mid \langle \alpha, z\rangle =\beta+t\lambda\omega_r^j\,\,\text{for some}\,\, t\geq 0\}$ for all $j\in [r]$. Moreover, one may take $\lambda=1$ after scaling $(\alpha,\beta)$ by $\lambda^{-1}$.

\begin{lemma}
\label{lem: Linear Complex Fans} Let $d\geq 1$ and $r\geq 2$ be integers. Suppose that $a_1,\ldots, a_n$ are complex affinely spanning points in $\C^d$ whose Gale transform is $g_1,\ldots, g_n\in \C^{n-d-1}$, and let $f\colon \Delta_{n-1}\rightarrow \C^d\cong \R^{2d}$ be the (real) linear map given by setting $f(i)=a_i$ for all $i\in [n]$. Suppose that $(\sigma_1,\ldots, \sigma_r)$ is a proper $r$-Tverberg tuple for $f$ and let $I_1,\ldots, I_r\subset [n]$ be the corresponding vertex sets of the $\sigma_j$. Then there exists a linear complex regular $r$-fan $F_r$ in $\C^{n-d-1}$ which distributes the $g_i$ according to the $I_j$. 
\end{lemma}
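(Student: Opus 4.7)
The plan is to adapt the strategy of Lemma~\ref{lem:Real Linear Fans}(a) to the complex setting, using powers of $\omega_r=\exp(2\pi i/r)$ in place of the $\pm$ signs that gave a real $r$-fan with dihedral angle $2\pi/r$ between consecutive half-flats. Starting from a proper Tverberg tuple, I have real nonnegative scalars $t_1,\ldots,t_n$ with $\sum_{i\in I_j}t_ia_i=p$ for all $j\in[r]$, with $\sum_{i\in I_j}t_i=1$, $t_i>0$ on $\sqcup_j I_j$ and $t_i=0$ off it. The natural guess is to encode the Tverberg data as a single complex linear relation by twisting the weight on block $I_j$ by $\omega_r^j$.

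Concretely, I would define $\lambda=(\lambda_1,\ldots,\lambda_n)\in\C^n$ by $\lambda_i=t_i\omega_r^j$ for $i\in I_j$ and $\lambda_i=0$ otherwise. The two hypotheses of Proposition~\ref{prop: Gale dependence} (complex version) are immediate: $\sum_i\lambda_i=\sum_{j=1}^r\omega_r^j\sum_{i\in I_j}t_i=\sum_{j=1}^r\omega_r^j=0$, and $\sum_i\lambda_i a_i=\sum_{j=1}^r\omega_r^j\bigl(\sum_{i\in I_j}t_ia_i\bigr)=p\sum_{j=1}^r\omega_r^j=0$. Since $\lambda\ne 0$, Proposition~\ref{prop: Gale dependence} furnishes a nonzero $\alpha\in\C^{n-d-1}$ with $\langle\alpha,g_i\rangle=\lambda_i$ for every $i\in[n]$.

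Next, I would read off the fan directly from $\alpha$. Let $H_\C=\{z\in\C^{n-d-1}\mid\langle\alpha,z\rangle=0\}$, which is a complex hyperplane (hence a real codimension-2 flat through the origin), and for each $j\in[r]$ set
\[
B_j=\{z\in\C^{n-d-1}\mid\langle\alpha,z\rangle=s\,\omega_r^j\text{ for some }s\ge 0\}.
\]
Each $B_j$ is a closed real half-hyperplane with boundary $H_\C$, and the preimages of the equiangular rays $\R_{\ge 0}\omega_r^j$ under the $\C$-linear surjection $z\mapsto\langle\alpha,z\rangle$ meet $H_\C^{\perp_\R}$ in $r$ rays separated by angle $2\pi/r$. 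Thus $F_r=\bigcup_{j=1}^rB_j$ is a linear complex regular $r$-fan in $\C^{n-d-1}$ in the sense of Section~\ref{sec:intro}, following the normal-form description given just before the lemma statement (with $\beta=0$ and $\lambda=1$).

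Finally, I would verify the distribution property. For $i\in I_j$, $\langle\alpha,g_i\rangle=t_i\omega_r^j$ with $t_i>0$, so $g_i\in\Int(B_j)$; for $i\notin\sqcup_{j=1}^r I_j$, $\langle\alpha,g_i\rangle=0$, so $g_i\in H_\C$, the center of the fan. This is exactly the required allocation of the $g_i$ according to the $I_j$. I do not anticipate a serious obstacle: the real analogue already contains the essential bookkeeping, and passing to $\C$ only replaces the alternating-sign argument by multiplication by $r$-th roots of unity, which simultaneously forces $\sum\lambda_i=0$ and produces the equiangular geometry demanded by \emph{regular}. The one place to be careful is ensuring that the proper-Tverberg condition is strong enough to guarantee $\alpha\ne 0$ and strict positivity $t_i>0$ on $\sqcup_jI_j$, so that the interior condition in the distribution holds; both follow directly from the definition of proper.
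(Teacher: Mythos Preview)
Your proposal is correct and follows essentially the same approach as the paper: define $\lambda_i=t_i\omega_r^j$ for $i\in I_j$ and $\lambda_i=0$ otherwise, verify the affine dependence conditions using $\sum_{j=1}^r\omega_r^j=0$, apply Proposition~\ref{prop: Gale dependence} to obtain $\alpha$, and read off the regular fan from the level sets $\langle\alpha,z\rangle\in\R_{\ge 0}\omega_r^j$. The paper's proof is line-for-line the same construction.
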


\begin{proof}[Proof of Lemma~\ref{lem: Linear Complex Fans}] As before, suppose that  $I_1,\ldots, I_r\subset [n]$ are pairwise disjoint non-empty sets such that $\sum_{i\in I_1}t_ia_i=\cdots=\sum_{i\in I_r}t_ia_i$ where $t_i>0$ for all $i\in \sqcup_{j=1}^r I_j$ and $\sum_{i\in I_j}t_i=1$ for all $j\in [r]$, and moreover that $t_i=0$ for all $i\notin\sqcup_{j=1}^rI_j$. To construct a complex affine dependence among the $a_i$,  define $\lambda=(\lambda_1,\ldots, \lambda_n)\in \C^n\setminus\{0\}$ as follows: for each $j\in [r]$ and each $i\in I_j$,  let $\lambda_i=t_i\omega_r^j$, and let $\lambda_i=0$ for all $i\notin \sqcup_{j=1} I_j$. As  $\sum_{i\in I_j} t_j=1$ for all $j\in [r]$, we have $\sum_{i=1}^n \lambda_i=\sum_{j\in [r]}\sum_{i\in I_j} t_i\omega_r^j=\sum_{j\in [r]}\omega_j^r=0$. Likewise, we have $\sum_{i\in I_j}t_ia_i=c$ is constant over $j\in [r]$ and so $\sum_{i\in [n]}\lambda_i a_i=\sum_{j\in [r]}\sum_{i\in I_j}t_i\omega_r^ja_i=c\sum_{j\in [r]}\omega_r^j=0$ as well. By Proposition~\ref{prop: Gale dependence}, there is some $\alpha\neq 0$ such that $\langle \alpha, g_i\rangle =\lambda_i$ for all $i\in [n]$. Let $H_{\mathbb{C}}=\langle \alpha\rangle^\perp$ be the resulting linear complex hyperplane in $\C^{n-d-1}$, and for each $j\in [r]$ let $B_j=\{z\in \C^{n-d-1}\mid \langle \alpha, z\rangle = t\omega_r^j\,\,\text{for some}\,\, t\geq 0\}$. Thus $F_r=\cup_{j=1}^r B_j$ is a complex regular $r$-fan $F_r$ centered about $H_{\mathbb{C}}$. By construction, $g_i\in \Int{B_j}$ for all $i\in I_j$ and any $j\in [r]$, while $g_i\in H_{\mathbb{C}}$ if $i\notin \sqcup_{j\in [r]}I_j$.
\end{proof}

\begin{remark}
\label{rem:equivalence} 
We observe that distributions by complex linear $r$-fans are equivalent to proper Tverberg partitions when $r=2$. To see this, suppose that $H=B_1\cup B_2$ is a real hyperplane in $\C^{n-d-1}$ centered about a complex linear hyperplane $H_{\C}=\langle \alpha\rangle^\perp$ and that there exist two disjoint non-empty subsets $I_1,I_2$ of $[n]$ such that $g_i\in \Int{B_j}$ for all $i\in I_j$ and all $j\in \{1,2\}$, while $g_i\in H_{\mathbb{C}}$ otherwise. Thus $\langle \alpha, g_i\rangle =t_i(-1)^j$ for all $i\in I_j$ and all $j\in\{1,2\}$, where $t_i>0$ for all $i\in I_1\sqcup  I_2$, while $\langle \alpha, g_i \rangle =0$ if $i\notin I_1\sqcup I_2$. By Proposition~\ref{prop: Gale dependence}, we have $\sum_{i\in I_1}t_ix_i=\sum_{i\in I_2}t_ix_i$ and $\sum_{j\in I_1}t_i=\sum_{j\in I_2} t_i$. Letting $t_i'=\frac{t_i}{\sum_{j\in I_1}t_i}$ for all $i\in I_1\sqcup I_2$, we have that $t_i'>0$ for all $i\in I_1\sqcup I_2$, $\sum_{i\in I_1}t_i'=\sum_{i\in I_2}t_i'$, and $\sum_{i\in I_1} t_i'x_i=\sum_{i\in I_2} t_i'x_i$. 
\end{remark}

\section{Distributions by a Single Fan}  
\label{sec:Proofs 1D}

 In this section we prove our distribution results for a single fan, each of which follows by applying Gale duality to the linear version of an associated topological Tverberg-type theorem.

\subsection{Proof of Theorems~\ref{thm:real equidistribute} and ~\ref{thm:complex equidistribute}}

We begin by proving our equidistribution results for a single fan, which are a special case of Theorem~\ref{thm:Dolnikov}. 

\begin{proof}[Proof of Theorems~\ref{thm:real equidistribute} and ~\ref{thm:complex equidistribute}] Let $\K=\R$ or $\C$. We let $n\geq (r-1)(d+m+1)$ when $\K=\R$ and $n\geq (r-1)(2d+m+1)+1$ when $\K=\C$. 
Suppose that $X=X_1\sqcup\cdots\sqcup X_m$ is an $m$-coloring of a set of $n$ affinely spanning points in $\K^{n-d-1}$. Corresponding to each $X_i$, define a family of non-empty subsets of $X$ by $\F_i=\{F\subseteq X\mid |F\cap X_i|>|X_i|/r\}$ and let $\F=\cup_{i=1}^m \F_i$. As no $\F_i$ can contain $r$ pairwise disjoint members by the pigeon-hole principle, $\chi(\KG^r(\F))\leq m$.  Theorem~\ref{thm:Dolnikov} now guarantees an $r$-fan $F_r=\cup_{j=1}^r B_j$ in $\K^{n-d-1}$ (a conical $r$-fan when $\K=\R$, and a complex regular fan when $\K=\C)$ such that no $\Int(B_j)$ contains any element of $\F$. It follows that $|\Int(B_j)\cap X_i|\leq |X_i|/r$ for each $j\in [r]$ and $i\in[m]$, so that $F_r$ is the desired $r$-fan. Indeed, were $|\Int(B_j)\cap X_i|>|X_i|/r$ for some $j\in [r]$ and $i\in [m]$ then $\Int(B_j)\cap X$ would be an element of $\F_i$ and so $\Int(B_j)\cap X$ would not be a subset of $\Int(B_j)$. 
\end{proof}

\subsection{Proof of Theorem~\ref{thm:Dolnikov}} We now prove our piercing distribution result for a single fan.

\begin{proof}[Proof of Theorem~\ref{thm:Dolnikov}] Again let $\K=\R$ or $\C$ and let $X=\{x_1,\ldots, x_n\}$ be a set of $n$ affinely spanning points in $\K^{n-d-1}$, where $n\geq (r-1)(d+m+1)+1$ when $\K=\R$ and $n\geq (r-1)(2d+m+1)+1$ if $\K=\C$. We are given that $\F$ is a family of non-empty subsets of $X$ satisfying $\chi(\KG^r(\F))\leq m$. We now lift $X$ to the $\K$-affine hyperplane $\K^{n-d-1}\times \{1\}$ in $\K^{n-d}$. The sequence $g_1=(x_1,1),\ldots, g_n=(x_n,1)$ in $\K^{n-d}$ is linearly spanning, and including $g_{n+1}=-\sum_{i=1}^n g_i$ gives a sequence of $n+1$ points in $\K^{n-d}$ which both linearly span and which sum to zero. By Proposition~\ref{prop:Gale basic}, the sequence $g_1,\ldots, g_{n+1}$ is the Gale transform of $n+1$ points $a_1,\ldots, a_{n+1}$ in $\K^d$. We let $f\colon \Delta_n\rightarrow \K^d$ be the (real) linear map determined by sending each $i\in [n+1]$ to $a_i$ (here we identify $\C^d$ with $\R^{2d}$).

Corresponding to $\F$, let $\F'$ be the family of all subsets $\{i_1,\ldots, i_k\}$ of $[n]$ such that $\{x_{i_1},\ldots, x_{i_k}\}$ is in $\F$. As $\chi(\KG^r(\F))\leq m$, $\chi(\KG^r(\F'))\leq m$ as well. Given the value of $n$, we may apply Theorem~\ref{thm:Sarkaria} to the restriction $f\mid_{\Delta_{n-1}}\colon \Delta_{n-1}\rightarrow \K^d$ of $f$ to the simplex $\Delta_{n-1}$ whose vertex set is $[n]$. Thus there is a proper Tverberg tuple $(\sigma_1,\ldots, \sigma_r)$ for $f|_{\Delta_{n-1}}$ such that each $I_j=\Ver(\sigma_j)\subset [n]$ does not contain any element of $\F'$. As $(\sigma_1,\ldots, \sigma_r)$ is also a proper $r$-Tverberg partition for the original map $f$, by Lemmas~\ref{lem:linear conical fans} and ~\ref{lem: Linear Complex Fans}) there is a linear $r$-fan $F_r=\cup_{j=1}^rB_j$ in $\K^{n-d}$ (conical when $\K=\R$, and complex linear when $\K=\C$) which distributes $g_1,\ldots, g_{n+1}$ according to the $I_j$. Thus $g_i\in \Int(B_j)$ for all  $i\in I_j$ and all $j\in [r]$, while $g_i\in C=\cap_{j=1}^r H_j$ for all $i\notin \sqcup_{j=1}^r I_j$. As $n+1$ does not lie in any of the $I_j$, we have that $g_{n+1}$ lies in $C$. 

Now let $\K=\R$. For each $j\in [r]$ we have $H_j=\langle \alpha_j\rangle^\perp$ where $\alpha_j=(\beta_j, \gamma_j)\in \R^{n-d-1}\times \R$ is non-zero. The linear conical $r$-fan is of the form $F_r=\cup_{j=1}^r B_j$, where $B_j=H_j^+\cap A_j$, $A_j=\cap_{i\neq j,j-1} H_i$, and $H_j^+$ is some half-space determined by $H_j$. For each $j\in [r]$, let $H'_j=H_j\cap (\R^{n-d-1}\times \{1\})$ be the intersection of $H_j$ with the plane of height one in $\R^{n-d}$. Thus $H'_j=\{(x,1)\in \R^{n-d-1}\times \{1\}\mid \langle \beta_j, x\rangle =-\gamma_j\}$. Likewise, let $B'_j=B_j\cap (\R^{n-d-1}\times \{1\})$ for all $j\in [r]$. Letting $H_j''$ and $B_j''$ be the corresponding projections of $H_j'$ and $B_j'$ onto $\R^{n-d-1}$, we will show that $F''_r=\cup_{j=1}^r B_j''$ is a conical $r$-fan in $\R^{n-d-1}$. To that end, we first show that any $r-1$ of $\beta_1,\ldots, \beta_r$ are linearly dependent in $\R^{n-d-1}$. In particular, each $\beta_j$ is non-zero, so each $H''_j$ is an affine hyperplane in $\R^{n-d-1}$. Without loss of generality, consider $\beta_1,\ldots, \beta_{r-1}$ and suppose that $\sum_{j=1}^{r-1} \mu_j \beta_j=0$. We have $\langle \sum_{j=1}^{r-1}\mu_j\alpha_j, g_{n+1}\rangle=0$ because $g_{n+1}\in H_j$ for all $j\in [r]$. As $g_{n+1}=(-x, -n)$ where $x=\sum_{i=1}^n x_i\in \R^{n-d-1}$, we have $\langle \sum_{j-1}^r\mu_j\alpha_j, g_{n+1}\rangle = -n\sum_{j=1}^{r-1} \mu_j\gamma_j$. Thus $\sum_{j=1}^{r-1}\mu_j\gamma_j=0$, and since $\sum_{j=1}^{r-1}\mu_j\beta_j=0$ by assumption we have that $\sum_{j=1}^{r-1}\mu_j\alpha_j=0$ as well. However, any $r-1$ of the $\alpha_j$ are linearly independent in $\R^{n-d}$, so each $\mu_j$ is zero and $\beta_1,\ldots, \beta_{r-1}$ are linearly independent. On the other hand, $\beta_1,\ldots, \beta_r$ are linearly dependent because $\alpha_1,\ldots, \alpha_r$ are. Finally, we show that $\cap_{j=1}^r H'_j\neq\emptyset$ and so that $H_j''\neq \emptyset$ as well. But $g_{n+1}=(-x,-n) \in \cap_{j=1}^r H_j$, so $(\frac{x}{n},1)\in \cap_{j=1}^r H_j'\neq \emptyset$. 

In the complex setting, we have a complex regular $r$-fan $F_r=\cup_{j=1}^r B_j$ in $\C^{n-d}$ centered about a linear complex hyperplane $H_\C$. As above, we let $H_{\mathbb{C}}'=H_{\mathbb{C}}\cap (\C^{n-d-1}\times \{1\})$, $B_j'=B_j\cap (\C^{n-d-1}\times \{1\})$ for each $j\in [r]$, and we let $H''_\C$ and $B''_j$ be their corresponding projections onto $\C^{n-d-1}$.  We claim that $F_r''=\cup_{j=1}^r B''_j$ is a regular $r$-fan in $\C^{n-d-1}$ centered about the complex affine hyperplane $H''_{\C}$. To verify this, we have $H_{\C}=\langle \alpha \rangle^\perp$, where $\alpha=(\beta, \gamma)\in \C^{n-d-1}\times \C$ is non-zero. Thus $H''_{\C}=\{z\in \C^{n-d-1}\mid \langle \beta, z\rangle =-\gamma\}$ and $B''_j=\{(z\in \C^{n-d-1}\mid \langle \beta, z\rangle =-\gamma+t\omega_r^j\,\,\text{for some}\,\, t\geq 0\}$. We therefore have that $H_{\C}''$ and $F_r''$ are as claimed provided $\beta\neq 0$. But $g_{n+1}=(-x,-n)\in H_\C$, where $x=\sum_{i=1}^n x_i$, and so $\langle \beta, x\rangle=-n\gamma$. If $\beta=0$, then $\gamma=0$ and therefore $\alpha=0$ as well, a contradiction. Thus $\beta\neq 0$ and $F''_r$ is indeed a complex regular $r$-fan with center $H_\C''$.  

Again let $\K=\R$ or $\C$. Since $g_1,\ldots, g_{n+1}$ are distributed according to the $I_j$, for each $i\in [n]$ we have (1) for each $j\in [r]$ that $x_i$ lies in $\Int(B''_j)$ if $i\in I_j$ and (2) that $x_i$ lies in the center of $F_r''$ for all $i\notin\sqcup_{j=1}^r I_j$. In particular, $F''_r$ distributes $X$. To complete the proof, we show that no member of $\F$ is contained in any $\Int(B''_j)$. So let $j\in [r]$, let $A\in \F$, and let $A'$ be the corresponding element of $\F'$. As $A'$ is not contained in $I_j$, there exists some $i\in [n]$ such that $i\in A'\setminus I_j$. Thus $x_i\in A\setminus\Int(B_j'')$ and so $A$ is not contained in $\Int(B''_j)$.
\end{proof}

\subsection{Proof of Theorem~\ref{thm:colorful}}

We conclude this section by proving our rainbow distribution result. This follows the same procedure as before, except now we substitute Sarkaria's theorem with the following result~\cite[Corollary 2.3]{BMZ15} which gives optimal results to the ``colored Tverberg'' problem of B\'ar\'any and Larman~\cite{BL92}.

\begin{theorem}
\label{thm:Optimal Colored} Let $d\geq 1$ be an integer, let $r\geq 2$ be an integer such that $r+1$ is prime, and let $n\geq r(d+1)$. If $f\colon \Delta_{n-1}\rightarrow \R^d$ is a continuous map and $C_1\sqcup\cdots \sqcup C_{d+1}$ is a $(d+1)$-coloring of the vertex set of $\Delta_n$ such that $|C_i|\geq r$ for all $i\in[d+1]$, then there exists a Tverberg $r$-tuple $(\sigma_1,\ldots, \sigma_r)$ for $f$ such that $|\Ver(\sigma_j)\cap C_i|\leq 1$ for all $j\in [r]$ and all $i\in[d+1]$.
\end{theorem}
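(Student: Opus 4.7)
The plan is to prove Theorem~\ref{thm:Optimal Colored} via the configuration space / test map paradigm of equivariant topological combinatorics, ultimately reducing to an Euler class non-vanishing calculation where the primality of $r+1$ is essential.

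First I would reduce to the tight case $n = r(d+1)$ with $|C_i| = r$ for every $i$ by restricting $f$ to the subsimplex spanned by $r$ chosen vertices from each color class: any rainbow $r$-Tverberg tuple for the restricted map is one for $f$. I then set up the natural test space. Let $K$ be the subcomplex of the $r$-fold $2$-wise deleted join $(\Delta_{n-1})^{*r}_{\Delta(2)}$ consisting of those tuples $(\sigma_1,\ldots,\sigma_r)$ of pairwise disjoint non-empty faces that are simultaneously rainbow in the sense that $|\sigma_j \cap C_i| \leq 1$ for all $i$ and all $j$. Because each color class has exactly $r$ elements and the rainbow condition decouples across color classes, $K$ is identified with the $(d+1)$-fold join
\[
K \;\cong\; \underbrace{\Delta_{r,r} \ast \Delta_{r,r} \ast \cdots \ast \Delta_{r,r}}_{d+1}
\]
of chessboard complexes, carrying the diagonal action of $\mathfrak{S}_r$ that permutes the $r$ deleted-join coordinates. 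Standard deleted-join reasoning then shows that if no rainbow $r$-Tverberg tuple for $f$ exists, then composing $f^{*r}$ with the retraction of $(\R^d)^{*r}$ off its thin diagonal yields an $\mathfrak{S}_r$-equivariant map
\[
\varphi \colon K \longrightarrow S(W^{\oplus d}),
\]
where $W$ is the standard $(r-1)$-dimensional real representation of $\mathfrak{S}_r$.

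The task is to rule out the existence of such a $\varphi$. Following Blagojevi\'c--Matschke--Ziegler, I would show that the primary obstruction to $\varphi$ --- the equivariant Euler class in $H^{d(r-1)}_{\mathfrak{S}_r}(K;\mathbb{F}_{r+1})$ of the sphere bundle associated with $W^{\oplus d}$ --- is non-zero. Working modulo the prime $r+1$ allows one to pass to a cyclic subgroup $\Z_{r+1}$ via a Sylow/Borel localization argument of the \"Ozaydin--Volovikov type. A K\"unneth decomposition along the $d+1$ chessboard factors then reduces the non-vanishing of the Euler class to a concrete congruence, essentially a multinomial identity, in the $\Z_{r+1}$-equivariant cohomology of $\Delta_{r,r}$ (a product of lens spaces after the Borel construction).

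The main obstacle is exactly this final congruence: it is where the hypothesis that $r+1$ is prime is indispensable, since the key multinomial identity fails when $r+1$ is not prime, and it is the technical heart of the Blagojevi\'c--Matschke--Ziegler argument. Once the non-vanishing of the Euler class is established, $\varphi$ cannot exist, contradicting the supposed absence of a rainbow $r$-Tverberg tuple for $f$ and completing the proof.
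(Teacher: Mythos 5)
The paper does not prove this statement; it is quoted verbatim as the Optimal Colored Tverberg theorem of Blagojevi\'c--Matschke--Ziegler~\cite{BMZ15} and used as a black box. So the relevant comparison is with the BMZ proof, not with anything internal to this paper.

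Your setup is partly on track: the reduction to the tight case $n=r(d+1)$, $|C_i|=r$ is fine, and the identification of the rainbow configuration space with the join of chessboard complexes $\Delta_{r,r}^{\ast(d+1)}$ equipped with a symmetric group action is exactly the right combinatorial object. However, the core of your argument has a genuine gap. You propose to detect nontriviality of an $\mathfrak{S}_r$-equivariant obstruction by computing mod $r+1$ and then ``passing to a cyclic subgroup $\Z_{r+1}$ via a Sylow/Borel localization.'' There is no such subgroup: since $r+1$ is prime and $r+1 > r$, the prime $r+1$ does not divide $r! = |\mathfrak{S}_r|$, so $\mathfrak{S}_r$ has no element of order $r+1$, no $(r+1)$-Sylow subgroup, and the Borel-equivariant cohomology $H^*_{\mathfrak{S}_r}(-;\mathbb{F}_{r+1})$ carries no equivariant information (it is just the cohomology of the quotient, by the transfer). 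The localization step therefore cannot be carried out, and the mod-$(r+1)$ Euler class you want to show nonzero is the wrong invariant.

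What the BMZ proof actually does is structurally different. They first prove the colored Tverberg statement when the \emph{number of parts} is a prime $p$, using genuine $\Z_p$-equivariant obstruction theory on the chessboard-complex join (this is where the multinomial congruence mod $p$ lives, and the computation is for $\Z_p \subset \mathfrak{S}_p$, with $p$ honestly dividing the group order). The case stated here, with $r+1$ prime, is then obtained by a short combinatorial reduction rather than a new equivariant computation: adjoin one extra vertex to the simplex with its own new color class, extend $f$ arbitrarily, apply the prime case with $p = r+1$ parts to get $r+1$ disjoint rainbow faces with intersecting images, and discard the (at most one) face containing the dummy vertex. The remaining $r$ faces are disjoint, rainbow for the original coloring, and their images still intersect. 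You are aiming the heavy machinery at a statement that, in the literature, is derived from the prime case by this lightweight trick; to make your approach work you would either have to prove the prime-$p$ case first and then perform this reduction, or find an entirely different way to rule out the equivariant map -- but the $\Z_{r+1}$-localization route is closed.
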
 

\begin{proof}[Proof of Theorem~\ref{thm:colorful}] Let $\K=\R$ or $\C$. We let $n\geq r(d+1)$ if $\K=\R$ and $n\geq r(2d+1)$ if $\K=\C$, and we suppose that $X=\{x_1,\ldots, x_n\}$ is a set of $n$ affinely spanning points in $\K^{n-d-1}$. In the real case we have that $X=X_1\sqcup\cdots\sqcup X_{d+1}$ is a $(d+1)$-coloring with $|X_k|\geq r$ for all $k\in [d+1]$, and likewise in the complex case with $d+1$ replaced by $2d+1$. As in the proof of Theorem~\ref{thm:Dolnikov}, lift $X$ to the $\K$-affine hyperplane $\K^{n-d-1}\times\{1\}$ in $\K^{n-d}$. As there, the points $g_1=(x_1,1)\,\ldots, g_n=(x_n,1), g_{n+1}=-\sum_{i=1}^n g_i$ are the Gale transform of some sequence $a_1,\ldots, a_{n+1}$ of affinely spanning points in $\K^d$. As before, we let $f\colon \Delta_n\rightarrow \K^d$ be the (real) affine linear map defined by $f(i)=a_i$ for all $i\in [n+1]$. If $\K=\R$, we define $C_k=\{i\in [n] \mid x_i\in X_k\}$ for each $k\in [d+1]$. Thus $C_1\sqcup \cdots \sqcup C_{d+1}$ is a $(d+1)$-coloring of the vertex set of $\Delta_{n-1}$ into $d+1$ color classes, each of which has size at least $r$. We proceed analogously in the complex setting. Applying Theorem~\ref{thm:Optimal Colored} to the restriction of $f$ to $\Delta_{n-1}$, we have a proper $r$-Tverberg tuple $(\sigma_1,\ldots, \sigma_r)$ for $f\mid_{\Delta_{n-1}}$ such that the vertex set $I_j\subset [n]$ of each $\sigma_j$ contains at most one element of each color class.

In the real case, let $H_j$, $B_j$, $H''_j$, and $B''_j$ be as in the proof of Theorem~\ref{thm:Dolnikov}(\ref{thm:real Dolnikov}), and likewise in the complex case let $F_r$ and $F_r''$ be as in the proof of Theorem~\ref{thm:Dolnikov}(\ref{thm:complex Dolnikov}).  Thus $F''_r=\cup_{j=1}^r B''_j$ is an $r$-fan in $\K^{n-d-1}$ (conical when $\K=\R$ and complex regular when $\K=\C$) which distributes $X$ such that $x_i$ lies in $\Int(B''_j)$ for all $i\in I_j$ and all $j\in [r]$, while $x_i$ lies in the center of $F_r''$ for all $i\in [n]\setminus \sqcup_{j=1}^r I_j$. To complete the proof, we show that $|\Int(B_j'')\cap X_k|\leq 1$ for all $j$ and $k$. But $|\Int(B_j'') \cap X_k|=|I_j\cap C_k|\leq 1$ for all $j$ and $k$, so the proof is complete.\end{proof}

\section{A Sarkaria-type Theorem for Two Tverberg Tuples}
\label{sec:CS-TM}

In this section we prove Theorem~\ref{thm:Sarkaria 2}, our extension of  Sarkaria's theorem to two Tverberg $r$-tuples. Our proof follows the Configuration Space--Test Map paradigm typically used in topological combinatorics (see, e.g., ~\cite{Zi17}), and in particular closely matches that of an extension of the $r=2$ case of Sarkaria's theorem given in~\cite{FS24}. In our case, the existence of the desired pair of Tverberg tuples for the given map $f\colon \Delta_{n-1}\rightarrow \R^d$ will follow from the existence of a zero of a naturally associated equivariant map from a certain topological space equipped with a free $(\Z_r\oplus \Z_r)$-action to a specific $(\Z_r\oplus \Z_r)$-module. 

\subsection{Configuration Space} Our configuration space is based on the ``deleted product'' scheme commonly used in topological Tverberg-type theory (see, e.g.,~\cite{Oz87, MW15, Vo96}). Given a simplex $\Delta_{n-1}$, recall that the support $\supp(x)$ of a point $x$ in $\Delta_{n-1}$ is the smallest (closed) face in $\Delta_{n-1}$ containing $x$. For $r\geq 2$, the deleted $r$-fold product \[(\Delta_{n-1})^{\times r}_\Delta=\{x=(x_0,\ldots, x_{r-1})\mid x_i\in \Delta_{n-1}\,\, \text{for all}\,\, i \,\,\text{and}\,\, \supp(x_i)\cap \supp (x_j)=\emptyset\,\,\text{for all}\,\, i\neq j\}\] 
is the subset of the Cartesian product $\Delta_{n-1}^{\times r}$ consisting of all $r$-tuples of elements of $\Delta_{n-1}$ whose supports are pairwise disjoint. There is a natural free $\Z_r$-action on $(\Delta_{n-1})^{\times r}_\Delta$, so that for each $i\in \Z_r$ one  cyclically shifts each coordinate of an $r$-tuple $x=(x_0,\ldots, x_{r-1})$ forward by $i$. We shall be concerned with the Cartesian product \[X(r,n)=(\Delta_{n-1})^{\times r}_\Delta \times (\Delta_{n-1})^{\times r}_\Delta\] of the deleted product with itself, which we equip the standard $(\Z_r\oplus \Z_r)$ action. Thus each $\Z_r$-factor of $\Z_r\oplus \Z_r$ acts independently on the corresponding $(\Delta_{n-1})^{\times r}_\Delta$-factor of $X(r,n)$. 

\subsection{Test Space} Our test space will be the direct sum of two $(\Z_r\oplus \Z_r)$-modules. First, consider the regular representation $\R[\Z_r\oplus \Z_r]=\{\sum_{(i,j)\in \Z_r\oplus \Z_r} r_{(i,j)}\cdot(i,j) \mid r_{(i,j)}\in \R\,\,\text{for all}\,\, (i,j)\in \Z_r\oplus \Z_r\}$, for which the  $(\Z_r\oplus \Z_r)$-action is given by setting $(k,\ell)\cdot (i,j)=(k+i,\ell+j)$ for each $(k,\ell),(i,j)\in \Z_r\oplus \Z_r$ and extending linearly. Inside the regular representation is the subrepresentation $U_r=\{\sum_{(i,j)\in \Z_r\oplus \Z_r}r_{(i,j)}\cdot (i,j)\mid \sum_{(i,j)} r_{i,j}=0\}$ consisting of all elements of the regular representation whose coefficients sum to zero. In other words, $U_r$ is the complement of the diagonal 1-dimensional trivial subrepresentation in $\R[\Z_r\oplus \Z_r]$ and so has dimension $r^2-1$. For each $m\geq 1$, we then let $U_r^{\oplus m}$ be the $m$-fold direct sum of $U_r$, which we equip with the diagonal $(\Z_r\oplus \Z_r)$-action. 

For the second representation, we consider $\R^d[\Z_r]=\{\sum_{i\in \Z_r} y_i\cdot i\mid y_i\in \R^d\,\,\text{for all}\,\, i\in \Z_r\}$, which can be identified with the $d$-fold sum of the regular representation of $\Z_r$. As before, this is defined by setting $i\cdot j =i+j$ for each $i,j\in \Z_r$ and extending linearly, which results in a forward cyclic shift of the coefficients. We now let $V_d=\{\sum_{i\in \Z_r} y_i\cdot i\mid \sum_i y_i=0\}$ be the orthogonal complement of the diagonal $d$-dimensional trivial  subrepresentation inside $\R^d[\Z_r]$. Letting each $\Z_r$-factor of $\Z_r\oplus \Z_r$ act independently on each $V_d$-factor of $V_d\oplus V_d$  defines a $(\Z_r\oplus \Z_r)$-subrepresentation of $\R^d[\Z_r]\oplus \R^d[\Z_r]$ of dimension $2d(r-1)$. 

Finally, we let 
\begin{equation}
\label{eqn:U}
U:=(V_d\oplus V_d)\oplus U_r^{\oplus m}.\end{equation}

\subsection{Test Map}
Let $d\geq 1$ be an integer, let $r\geq 3$ be an odd integer, and suppose that $n\geq (r-1)(d+1)+m\frac{r^2-1}{2}+1$. We assume that $f\colon \Delta_{n-1}\rightarrow \R^d$ is a continuous map and that $\F$ is a family of non-empty subsets of $[n]=\Ver(\Delta_{n-1})$ such that $\chi(\KG^{r^2}\F)\leq m$. Thus $\F=\F_1\cup\cdots \cup \F_m$ where none of the $m$ subfamilies $\F_k$ contains $r^2$ pairwise disjoint sets. Without loss of generality, we may assume that each $\F_k$ is closed under taking supersets. Thus we wish to find two Tverberg $r$-tuples $\sigma=(\sigma_1, \ldots,\sigma_r)$ and $\tau=(\tau_1, \ldots, \tau_r)$ such that no $\Ver(\sigma_i\cap\tau_j)$ is a member of $\F$.

First, we construct a continuous map $T\colon X(r,n)\rightarrow V_d\oplus V_d$ whose zeros correspond to pairs of Tverberg tuples. For each $x=(x_0,\ldots, x_{r-1}) \in (\Delta_{n-1})^{\times r}_\Delta$, we let $\Avg(f,x)=\frac{1}{r}\sum_{i=0}^{r-1} f(x_i)$ denote the average value of the $f(x_i)$, and for each $(x,y)\in X(r,n)$ we set \[T(x,y)=(\sum_{i\in \Z_r}(f(x_i)-\Avg(f,x))\cdot i)\oplus (\sum_{i\in \Z_r}(f(y_i)-\Avg(f,y)\cdot i)).\] Clearly, both $(\supp(x_0),\ldots, \supp(x_{r-1}))$ and $(\supp(y_0),\ldots, \supp(y_{r-1}))$ are Tverberg tuples for $f$ if and only if $T(x,y)=0$. It is easily confirmed that $T$ is $(\Z_r\oplus \Z_r)$-equivariant with respect to the actions on the domain and codomain. 

Next, we construct a continuous $(\Z_r\oplus \Z_r)$-equivariant map $D\colon X(r,n)\rightarrow U_r^{\oplus m}$ which ensures, for each zero $(x,y)$ of $T$, that $\Ver(\supp(x_i)\cap \supp(y_j))$ is not an element of $\F$ for any $(i,j)\in \Z_r\oplus \Z_r$. Let $k\in[m]$. As $\F_k$ is closed under taking supersets, $\Sigma_k:=\{\sigma\subseteq [n]\mid \sigma\notin \F_k\}$ is an abstract simplicial complex on $[n]$. We denote by $\|\Sigma_k\|$ the realization of $\Sigma_k$ as a geometric subcomplex of $\Delta_{n-1}$.

Viewing $\Delta_{n-1}$ as a subset of $\R^n$, we endow $\Delta_{n-1}\times \Delta_{n-1}\subset \R^n\times \R^n=\R^{2n}$ with the Euclidian metric $d\colon (\Delta_{n-1}\times \Delta_{n-1})^2\rightarrow [0,\infty)$. For each $k\in [m]$ we define the map $d_k\colon X(r,n)\rightarrow U_m$ by setting \[d_k(x,y)=\sum_{(i,j)\in \Z_r\oplus \Z_r} \left[d((x_i,y_j),\|\Sigma_k\|\times |\Sigma_k\|)-a(x,y)\right]\cdot (i,j),\] where $a(x,y)=\frac{1}{r^2}\sum_{(i,j)\in \Z_r\oplus \Z_r} d\left((x_i,y_j),\|\Sigma_k\|\times |\Sigma_k\|\right)$ is the average of the distances from $(x_i,y_j)$ to $\|\Sigma_k\|\times \|\Sigma_k\|$. Again, it is easily verified that each $d_k$ is equivariant with respect to the actions concerned. 

We now show that $d_k(x,y)=0$ for some $(x,y)\in X(r,n)$ implies that no $\Ver(\supp(x_i) \cap (\supp(y_j))$ is an element of $\F_k$. Supposing that $d_k(x,y)=0$, we have that the distance from each $(x_i,y_j)$ to $\|\Sigma_k\|\times \|\Sigma_k\|$ is equalized. Since the $\supp(x_i)\cap \supp(y_i)$ are pairwise disjoint and $\F_k$ does not contain $r^2$ pairwise disjoint sets, there must be some $(i_0,j_0)\in \Z_r\oplus \Z_r$ such that $\Ver(\supp(x_{i_0})\cap (\supp(y_{j_0}))\in \Sigma_k$. Thus $d((x_{i_0},y_{j_0}), \|\Sigma_k\|\times \|\Sigma_k\|)=0$ and so $d((x_i,y_j),\|\Sigma_k\|\times \|\Sigma_k\|)=0$ for all $(i,j)\in \Z_r\oplus \Z_r$. For each $(i,j)\in \Z_r\oplus \Z_r$ we therefore have that both $x_i$ and $y_j$ lie in $\|\Sigma_k\|$, so $\supp(x_i)\cap \supp(y_j)$ is a face of $\|\Sigma_k\|$ and $\Ver(\supp(x_i) \cap (\supp(y_j))$ is an element of $\Sigma_k$. Thus $\Ver(\sigma_i \cap \tau_j)$ is not a member of $\F_k$ and so cannot contain any member of $\F_k$.

Finally, we define \[D=(d_1,\ldots, d_m)\colon X(r,n)\rightarrow U_r^{\oplus m}\] 
and let \begin{equation}\label{eqn:F} F=T\oplus D\colon X(r,n)\rightarrow U,\end{equation} which is again $(\Z_r\oplus \Z_r)$-equivariant. Given the discussion above, we have that $F(x,y)=0$ implies that $(\supp(x_0),\ldots, \supp(x_{r-1}))$ and $(\supp(y_0),\ldots, \supp(y_{r-1}))$ are Tverberg tuples for $f$ such that none of the $\Ver(\supp(x_i)\cap \supp(y_j))$ contains an element of $\F$. Theorem~\ref{thm:Sarkaria 2} is then an immediate consequence of the following proposition, whose proof we defer to the next section. 

\begin{proposition}
\label{prop:Borsuk-Ulam}
Let $d\geq 1$ be an integer, let $r\geq 3$ be an odd prime, let $n\geq (r-1)(d+1)+m\frac{r^2-1}{2}$ be an integer, and let $U$ be the representation~(\ref{eqn:U}). If the $r$-ary expansion of $m(r-1)/2$ has all even coefficients, then any continuous $(\Z_r\oplus \Z_r)$-equivariant map $F\colon X(r,n)\rightarrow U$ has a zero.  
\end{proposition}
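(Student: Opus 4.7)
My plan is to follow the standard Configuration Space / Test Map paradigm and argue by contradiction: if $F\colon X(r,n)\to U$ had no zero, normalizing would produce a $G$-equivariant map $X(r,n)\to S(U)$ with $G=\Z_r\oplus\Z_r$, and by the Gysin sequence of the sphere bundle $EG\times_G(X(r,n)\times S(U))\to EG\times_G X(r,n)$ this would force the equivariant Euler class $e_G(U)\in H^*(BG;\mathbb{F}_r)$ to restrict to zero in $H^*_G(X(r,n);\mathbb{F}_r)$. The proof reduces to showing this restriction is in fact nonzero.

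Since $r$ is odd, every nontrivial real irreducible $G$-representation is $2$-dimensional and admits a canonical $G$-equivariant complex structure, so $U$ carries the structure of a complex $G$-module of complex dimension $N := d(r-1) + m(r^2-1)/2$, with $e_G(U)$ equal to its top Chern class $c_N(U)$. Writing $H^*(BG;\mathbb{F}_r)=\mathbb{F}_r[x_1,x_2]\otimes\Lambda(y_1,y_2)$ with $|x_i|=2$, the Chern class of the line bundle associated to the character $(a,b)\in G$ is $ax_1+bx_2$, so a direct computation yields
\[
e_G(V_d\oplus V_d) \;=\; \alpha\, x_1^{d(r-1)/2} x_2^{d(r-1)/2}, \qquad e_G(U_r^{\oplus m}) \;=\; \Bigl(\prod_{[(a,b)]}(ax_1+bx_2)\Bigr)^m,
\]
where $\alpha\in\mathbb{F}_r^\times$ and the product runs over $(r^2-1)/2$ representatives of the classes $\{\pm(a,b)\}$ of nonzero characters of $G$. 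The identity $\prod_{(a,b)\neq 0}(ax_1+bx_2)=-x_1^{r-1}x_2^{r-1}(x_1^{r-1}-x_2^{r-1})^{r-1}$ in $\mathbb{F}_r[x_1,x_2]$ then permits extracting an explicit formula for $e_G(U)$ up to a unit.

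The final step is verifying that $e_G(U)$ survives the restriction to $H^*_G(X(r,n);\mathbb{F}_r)$. I would bound the Fadell--Husseini index $\mathrm{Ind}_G(X(r,n))\subset H^*(BG;\mathbb{F}_r)$ by leveraging the high connectivity of $Y=(\Delta_{n-1})^{\times r}_\Delta$ for $r$ prime (as in the Ozaydin--Volovikov proof of topological Tverberg) together with an equivariant K\"unneth argument applied to $X(r,n)=Y\times Y$. The bound $n\geq(r-1)(d+1)+m(r^2-1)/2$ matches the real dimension $2N$ of $U$ against the lowest degree in which the index ideal becomes the full augmentation ideal. The hypothesis on the $r$-ary digits of $m(r-1)/2$ enters through a Lucas-type computation: expanding $(x_1^{r-1}-x_2^{r-1})^{m(r-1)/2}$ by the binomial theorem, the parity condition guarantees a binomial coefficient that is nonzero modulo $r$ and whose associated monomial lies outside the index ideal, forcing $e_G(U)\big|_{X(r,n)}\neq 0$. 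The main obstacle will be obtaining a sharp upper bound on $\mathrm{Ind}_G(X(r,n))$ under the full diagonal $G$-action: product-of-indices formulas typically yield only a lower bound, so one must combine a careful Leray--Serre spectral sequence analysis of the Borel fibration with the exact structure of $\mathrm{Ind}_{\Z_r}(Y)$ obtained from the connectivity of $Y$.
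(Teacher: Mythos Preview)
Your approach is essentially the same as the paper's: both reduce to showing that the equivariant Euler class (equivalently, the top Chern class of the complex structure on $U$) is nonzero after restriction from $H^*(BG)$ to $H^*_G(X(r,n))$, and both invoke the Dickson-invariant identity and Lucas's theorem to control the relevant binomial coefficient. The paper works with integer coefficients and the ``tensor subring'' of $H^*(B\Z_r\times B\Z_r;\Z)$, while you use $\mathbb{F}_r$ coefficients and Fadell--Husseini index language; these are interchangeable here.

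Where you diverge is in your assessment of the ``main obstacle.'' You anticipate needing a Leray--Serre analysis to bound $\mathrm{Ind}_G(X(r,n))$ under a diagonal action, and worry that product-index formulas only give lower bounds. But the $(\Z_r\oplus\Z_r)$-action on $X(r,n)=X_n\times X_n$ is not diagonal: each $\Z_r$ summand acts on its own $X_n$ factor, and the action is free. Hence the Borel construction is simply the honest quotient $\overline{X_n}\times\overline{X_n}$, and since $\overline{X_n}$ is the $(n-r)$-skeleton of $B\Z_r$, the map $H^*(B\Z_r;\mathbb{F}_r)\to H^*(\overline{X_n};\mathbb{F}_r)$ is injective in degrees $\leq n-r$. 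K\"unneth then gives injectivity of $H^*(BG)\to H^*_G(X(r,n))$ on the polynomial subring in the bidegrees that matter, and the hypothesis $n\geq (r-1)(d+1)+m(r^2-1)/2$ is exactly what places the class $e_G(U)$ in that range. So no spectral-sequence argument is needed; your proof becomes complete (and matches the paper's) once you observe that the action is a product action rather than a diagonal one.
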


\begin{proof}[Proof of Theorem~\ref{thm:Sarkaria 2}] Let $f\colon \Delta_{n-1}\rightarrow \R^d$ be continuous, and suppose that $\F$ is a family of non-empty subsets of $[n]$ for which $\chi(\KG^{r^2}(\F))\leq m$. As the  map $F\colon X(r,n)\rightarrow U$ given by (\ref{eqn:F}) is equivariant, by Proposition~\ref{prop:Borsuk-Ulam} there is some $(x,y)\in X(r,n)$ such that $F(x,y)=0$. Hence $(\supp(x_0),\ldots, \supp(x_{r-1}))$ and $(\supp(y_0),\ldots, \supp(y_{r-1}))$ are the desired Tverberg tuples for $f$.\end{proof}

\begin{remark}
\label{rem:values}
We conclude this section by verifying that Proposition~\ref{prop:Borsuk-Ulam} (and Theorems~\ref{thm:Sarkaria 2}, ~\ref{thm:equidistribute 2}, and ~\ref{thm:Dolnikov 2}) holds for the particular values of $m$ mentioned in the introduction, namely when \begin{itemize} \item $m=2a(r^{\ell_1}+\cdots+r^{\ell_k})$ where $0\leq \ell_1<\cdots<\ell_k$ and $1\leq a\leq r-1$ is odd, or \item $r\equiv 1\pmod{4}$ and $m=r^{\ell_1}+\cdots+r^{\ell_k}$ with $0\leq \ell_1<\cdots<\ell_k$. \end{itemize} 

In the first case we have have \begin{align*}
    m(r-1)/2 & =a(r-1)(r^{\ell_1}+\cdots+r^{\ell_k})\\
    & =(r-a)r^{\ell_1}+(a-1)r^{\ell_1+1}+(r-a)r^{\ell_2}+(a-1)r^{\ell_2+1}+\cdots+(r-a)r^{\ell_k}+(a-1)r^{\ell_k+1}.
\end{align*}
    
As both $a-1$ and $r-a$ are even, we are done provided that $\ell_i+1<\ell_{i+1}$ for all $i\in [k]$. On the other hand, if $\ell_i+1=\ell_{i+1}$ for some $i$, then the coefficient of $r^{\ell_{i+1}}$ in the $r$-ary expansion of $m(r-1)/2$ is $r-1$, which is again even. Thus each coefficient in the $r$-ary expansion of $m(r-1)/2$ is even in all cases. As $1+r+\cdots+r^\ell=\frac{r^{\ell+1}-1}{r-1}$, Proposition~\ref{prop:Borsuk-Ulam} applies in particular if $m=2a(r^{\ell+1}-1)/(r-1)$ and $\ell\geq 0$. In the second case, $(r-1)/2$ is even and so $m(r-1)/2=\sum_{i=1}^k \frac{r-1}{2}\,r^{\ell_i}$. Thus the coefficients of the $r$-ary expansion are all even. In particular, one may let $m=\frac{r^{\ell+1}-1}{r-1}$ for all $\ell\geq 0$.\end{remark}

\section{Proof of Proposition~\ref{prop:Borsuk-Ulam}} 
\label{sec:BU Proof}
Our proof of Proposition~\ref{prop:Borsuk-Ulam} follows a standard factorization trick, whereby the given equivariant map induces a section of a complex vector bundle given by the Borel construction. One then shows that the bundle's top Chern class is non-zero, so that the section, and therefore the equivariant map, necessarily has a zero. A similar Chern class computation can be found in ~\cite{Si15} in the context of mass equipartitions by complex regular fans. We refer the reader to ~\cite{Hu94, Mil74} for the basic theory of vector bundles and characteristic classes and to ~\cite{FH04, Ser77, St12} for more on the representation theory of finite groups.  

\subsection{Complex Representations} We begin by showing that the real $(\Z_r\oplus \Z_r)$-module $U$ (\ref{eqn:U}) above can be equivariantly identified with the complex $(\Z_r\oplus \Z_r)$-module $W$  (\ref{eqn:W}) below.  

To that end, recall that the irreducible complex representations of any finite abelian group $G$ are all 1-dimensional (see, e.g., ~\cite[\S 3.1 Theorem 9]{Ser77}) and are indexed by the elements of the group itself (see, e.g.,~\cite[\S 2.7 Theorem 7]{Ser77}). Explicitly, if $G=\Z_{m_1}\oplus\cdots\oplus \Z_{m_k}$, then for each $a=(a_1,\ldots, a_k)\in G$ the homomorphism  $\rho_a\colon G\rightarrow \C^\ast$ is given by $\rho_a(g)=\omega_{m_1}^{a_1g_1}\cdots \omega_{m_k}^{a_kg_k}$ for each $g=(g_1,\ldots, g_k)\in G$, where $\omega_{m_j}=\exp(2\pi i/m_j)$ is the standard $m_j$-th root of unity (see e.g.,~\cite[Proposition 4.5.1]{St12}). We denote the corresponding $G$-module by $W_a=\C$. Letting $\C[G]=\{\sum_{g\in G} z_g\cdot g\mid z_g\in G\,\,\text{for all}\,\, g\in G\}$ denote the complex regular representation, one has moreover that $\C[G]\cong \oplus_{a\in G}W_a$ (see, e.g.,~\cite[\S 2.4 Corollary 1]{Ser77}).

Next, recall that if $G$ is any finite group and $V$ is a $d$-dimensional real $G$-representation then its complexification $V\otimes \C$ (see, e.g.,~\cite{FH04}) is a $d$-dimensional complex $G$-representation. For our purposes, we have that the complexification of the real regular representation $\R[\Z_r\oplus \Z_r]$ is the complex regular representation $\C[\Z_r\oplus \Z_r]$, and likewise the complexifications of $V_d\oplus V_d$ and $U_r$ of Section~\ref{sec:CS-TM} are obtained by replacing real with complex coefficients in their respective definitions. As $\C[\Z_r\oplus \Z_r]\cong \oplus_{(a,b)\in \Z_r\oplus \Z_r}W_{(a,b)}$, we have that $U_r\otimes \C\cong\oplus_{(a,b)\neq (0,0)} W_{(a,b)}$ and so that $U_r^{\oplus m}\otimes \C\cong \oplus_{(a,b)\neq 0}W_{(a,b)}^{\oplus m}$. Now consider the $(\Z_r\oplus \Z_r)$-representation $V_d\oplus V_d$. As $\R[\Z_r]\otimes \C\cong\C[\Z_r]\cong \oplus_{a\in \Z_r}W_a$, we have that $V_d\otimes \C\cong \oplus_{a\neq 0} W_a^{\oplus d}$ as a $\Z_r$-representation. However, projecting $\Z_r\oplus \Z_r$ onto the first factor allows one to view the first $V_d$-factor of $V_d\oplus V_d$ as a $(\Z_r\oplus \Z_r)$-module, from which one obtains the decomposition $V_d\otimes \C\cong \oplus_{a\neq 0} W_{(a,0)}^{\oplus d}$. Proceeding analogously for the second $V_d$-factor, we conclude that $(V_d\oplus V_d)\otimes \C\cong\bigoplus_{a\neq 0} W_{(a,0)}^{\oplus d}\oplus \bigoplus_{b\neq 0} W_{(0,b)}^{\oplus d}$. Thus
\[U\otimes \C \cong \bigoplus_{a\neq 0} W_{(a,0)}^{\oplus d}\oplus \bigoplus_{b\neq 0} W_{(0,b)}^{\oplus d}\oplus \bigoplus_{(a,b)\neq 0}W_{(a,b)}^{\oplus m}.\]

On the other hand, if $G$ is a finite group and $V$ is a $d$-dimensional complex $G$-representation then one may consider the $2d$-dimensional real $G$-representation $V_\R$ given by restricting scalar multiplication in $V$ to $\R$ (see, e.g.,~\cite{FH04}). If $U$ is a real representation, then any element of its complexification $U\otimes \C$ can be expressed uniquely in the form $u_1+iu_2:=u_1\otimes 1 + u_2\otimes i$ with $u_1,u_2\in U$, and the $G$-action on $U\otimes \C$ is given by $g\cdot (u_1+iu_2)=g\cdot u_1+i(g\cdot u_2)$. One therefore has the isomorphism $(U\otimes \C)_\R\cong U\oplus U$ of real representations. Now let $(a,b)\in \Z_r\oplus \Z_r$ and consider the $1$-dimensional complex $(\Z_r\oplus\Z_r)$-module $W_{(a,b)}$ above. We have that $W_{(-a,-b)}=\overline{W_{(a,b)}}$ is the complex conjugate representation of $W_{(a,b)}$, and since complex conjugation is a real linear isomorphism of $\R^2$ we conclude that $(W_{(a,b)})_\R$ and $(W_{(-a,-b)})_\R$ are isomorphic as real representations. Finally, let $A$ be the subset of $\Z_r\oplus \Z_r$ consisting of all non-zero $(a,b)$ whose last non-zero coordinate lies in $\{1,\ldots, \frac{r-1}{2}\}$. We then have that
\[U\oplus U\cong \bigoplus_{1\leq a\leq \frac{r-1}{2}}(W_{(a,0)})_\R^{\oplus 2d}\oplus \bigoplus_{1\leq b\leq \frac{r-1}{2}}(W_{(0,b)})_\R^{\oplus 2d}\oplus \bigoplus_{(a,b)\in A}(W_{(a,b)})_\R^{\oplus 2m}\] and therefore that $U\cong W_\R$ where $W$ is the complex representation \begin{equation} 
\label{eqn:W} 
W:= \bigoplus_{1\leq a \leq \frac{r-1}{2}}W_{(a,0)}^{\oplus d}\oplus \bigoplus_{1\leq b \leq \frac{r-1}{2}} W_{(0,b)}^{\oplus d}\oplus \bigoplus_{(a,b)\in A}W_{(a,b)}^{\oplus m}.\end{equation} 
We note that the complex dimension of $W$ is $\dim W=N:=(r-1)d+m\frac{r^2-1}{2}.$

\subsection{A Chern Class Computation} Let $n>r$ be an integer. For simplicity, we will let $X_n=(\Delta_{n-1})^{\times r}_\Delta$ denote the deleted $r$-fold product of Section~\ref{sec:CS-TM} and denote by $\overline{X_n}=X_n/\Z_r$ its quotient by the free $\Z_r$-action discussed there. The quotient of $X(r,n)=X_n\times X_n$ under the standard $(\Z_r\oplus \Z_r)$-action is then $\overline{X(r,n)}=\overline{X_n}\times \overline{X_n}$.

For our purposes, the central feature of $X_n$ is that it is an $(n-r)$-dimensional cellular complex which is $(n-r-1)$-connected (see, e.g,~\cite{BBS81}). The union $\bigcup_{n=r}^\infty X_n$ may therefore be taken to be the total space $E\Z_r$ of the classifying bundle $\Z_r\hookrightarrow E\Z_r\rightarrow B\Z_r:=E\Z_r/\Z_r$ for $\Z_r$, with classifying space $B\Z_r$  taken to be $\bigcup_{n=r}^\infty\overline{X_n}$. One may then take $B\Z_r\times B\Z_r$ to be the classifying space $B(\Z_r\oplus \Z_r)$ for $\Z_r\oplus \Z_r$, of which $\overline{X(r,n)}$ is a subcomplex.

Now let $n\geq (r-1)(d+1)+m\frac{r^2-1}{2}+1$ and suppose that $F\colon X(r,n)\rightarrow W$ is a $(\Z_r\oplus \Z_r)$-equivariant map. As the inclusion $X(r,n_1) \hookrightarrow X(r,n_2)$ is $(\Z_r \oplus \Z_r)$-equivariant when $n_2>n_1>r$ we may assume that $n=(r-1)(d+1)+m\frac{r^2-1}{2}+1$. Letting $N=(r-1)d+m(r^2-1)/2$ as above gives that $N=n-r$ and that $\dim X(r,n)=2N$.

Corresponding to the $N$-dimensional complex representation $W$, let \[E(r,n):=X(r,n)\times_{\Z_r\oplus \Z_r} W\] be the $N$-dimensional complex vector bundle over $\overline{X(r,n)}$ obtained by  quotienting the trivial bundle $X(r,n)\times W$ under the diagonal $(\Z_r\oplus \Z_r)$-action. The map $F$ gives rise to a section $s\colon \overline{X(r,n)}\rightarrow E(r,n)$ of this bundle which is induced by the equivariant section $X(r,n)\rightarrow X(r,n) \times W$ of the trivial bundle given by $x\mapsto (x,F(x))$. Thus $F$ has a zero if and only if the section $s$ does. However, a never-vanishing section of $E(r,n)$ exists if and only if the bundle's top Chern class $c_N(E(r,n))\in H^{2N}(\overline{X(r,n)};\Z)$ is zero. To complete the proof we will show that the mod $r$-reduction $c_N(E(r,n);\Z_r)\in H^{2N}(\overline{X(r,n)};\Z_r)$ of $c_N(E(r,n))$ is non-zero.

Let $Y_n=E\Z_r^{(N)}$ denote the $N$-dimensional skeleton of $E\Z_r$, so that $\overline{Y_n}=Y_n/\Z_r$ is the $N$-skeleton of $B\Z_r$. Likewise, let $Y(r,n)=Y_n\times Y_n$ and $\overline{Y(r,n)}=Y(r,n)/(\Z_r\oplus \Z_r)=\overline{Y_n}\times \overline{Y_n}$. Since $X_n$ is $(N-1)$-connected and the free action of $\Z_r$ on $X_n$ is cellular, its identity map extends to a $\Z_r$-equivariant map $Y_n\rightarrow X_n$.  The identify map of $X(r,n)$ therefore extends to a $(\Z_r\oplus \Z_r)$-equivariant map $ Y(r,n)\to X(r,n)$ with a resulting map $h\colon \overline{Y(r,n)}\rightarrow \overline{X(r,n)}$ on quotients. Letting \[E'(r,n):=Y(r,n)\times_{\Z_r\oplus \Z_r} W,\] the map $Y(n,r)\rightarrow X(n,r)$ induces a bundle map $ E'(r,n)\rightarrow E(r,n)$ and therefore $c_N(E'(r,n);\Z_r)=h^*(c_N(E(r,n);\Z_r))$ by the naturality of Chern classes. Thus to show that $c_N(E(r,n);\Z_r)\neq 0$ it suffices to show that $c_N(E'(r,n);\Z_r)\in H^{2N}(\overline{Y(r,n)};\Z_r)\neq 0$. 
    
Next, recall that the total Chern class $c(V)$ (or its mod $r$ reduction $c(V;\Z_r)$) of any finite-dimensional complex representation $V$ of a finite group $G$ is defined to be the (mod $r$ reduction of the) total Chern class of the vector bundle $E_V:=EG\times_G V$ over its classifying space $BG$ (see, e.g.,~\cite{At61}). In our case, the bundle $E'(r,n)$ above is the pullback under inclusion $i\colon \overline{Y(r,n)}\hookrightarrow B(\Z_r\oplus \Z_r)$ of the bundle $E_W$ determined by the representation $W$~(\ref{eqn:W}). Thus $c_N(E'(r,n);\Z_r)=i^\ast(c_N(W);\Z_r)$ by the naturality of Chern classes.

We now calculate $c_N(W;\Z_r)\in H^\ast(B(\Z_r\oplus \Z_r);\Z_r)$. Recall that $H^\ast (B\Z_r;\Z_r)\cong \Lambda[x]\otimes_{\Z_r} \Z_r[y],$ where $|x|=1$, $|y|=2$, and $\Lambda[\cdot ]$ denotes the exterior algebra. Moreover, one may take $y=c_1(W_1;\Z_r)$ to be the mod $r$ reduction of the standard complex 1-dimensional representation $W_1$ of $\Z_r$ (see, e.g., \cite{At61} or ~\cite{Ha00}). Letting $\pi_j\colon B\Z_r\times B\Z_r\rightarrow B\Z_r$ be the $j$-th coordinate projection for $j\in\{1,2\}$, the K\"unneth formula (see, e.g.,~\cite{Ha00}) gives that \[H^\ast(B(\Z_r\oplus \Z_r);\Z_r)\cong \Lambda[x_1,x_2]\otimes_{\Z_r}\Z_r[y_1,y_2],\] where $x_j=\pi_j^*(x)$ and $y_j=\pi_j^*(y)$ for $j\in\{1,2\}$. However, $E_{W_{(1,0)}}=\pi_1^\ast(E_{W_1})$ and $E_{W_{(0,1)}}=\pi_2^\ast (E_{W_1})$ are the pullbacks of $E_{W_1}$ under the respective coordinate projections, so by the naturality of Chern classes we have $y_1=c_1(W_{(1,0)};\Z_r)$ and $y_2=c_2(W_{0,1};\Z_r)$. Next, the Whitney sum formula (see, e.g., ~\cite{Mil74}) shows that $c(V_1\oplus V_2;\Z_r)=c(V_1;\Z_r)c(V_2;\Z_r)$ for any two complex $(\Z_r\oplus \Z_r)$-representations $V_1$ and $V_2$, and one also has $c_1(V_1\otimes V_2;\Z_r)=c_1(V_1;\Z_r)+c_1(V_2;\Z_r)$ if both of these representations are $1$-dimensional (see, e.g, ~\cite{Hu94}). As $W_{(j,k)}=W_{(1,0)}^{\otimes j}\otimes W_{(0,1)}^{\otimes k}$ for any $(j,k)\in \Z_r\oplus \Z_r$ and $c(E;\Z_r)=1+c_1(E;\Z_r)$ for any line bundle $E$, the decomposition (\ref{eqn:W}) of $W$ gives 
\[c_N(W;\Z_r)=[((r-1)/2)!]^{2d}y_1^{d(r-1)/2}y_2^{d(r-1)/2}\prod_{(a_1,a_2)\in A}(a_1y_1+a_2y_2)^m,\] where we recall that $A\subset \Z_r\oplus \Z_r$ consists of all non-zero $(a_1,a_2)\in \Z_r\oplus \Z_r$ whose last non-zero coordinate lies in $\{1,\ldots, (r-1)/2\}$. 

For each $k\in \{1,\ldots, (r-1)/2\}$, let $A_k$ be the subset of $A$ consisting of all $(a_1,a_2)$ whose last non-zero coordinate is equal to $k$. Let $k^{-1}\in \Z_r$ denote the multiplicative inverse of $k$ in $\Z_r$. Multiplication by a fixed an element of $\Z_r\setminus\{0\}$ determines a bijection of $\Z_r\setminus\{0\}$, and therefore \begin{align*} \prod_{(a_1,a_2)\in A_k}(a_1y_1+a_2y_2)&=ky_1\cdot ky_2\cdot (y_1+ky_2)\cdot (2y_1+ky_2)\cdots((r-1)y_1+ky_2)\\
& = k^{r+1}y_1\cdot y_2\cdot (k^{-1}y_1+y_2)\cdot(2k^{-1}y_1+y_2)\cdots((r-1)k^{-1}y_1+y_2)\\
& = k^{r+1}y_1\cdot y_2\cdot (y_1+y_2)\cdot (2y_1+y_2)\cdots ((r-1)y_1+y_2)\\
&=k^{r+1}\prod_{(a_1,a_2)\in A_1}(a_1y_1+a_2y_2).\end{align*}
Thus $\prod_{(a_1,a_2)\in A}(a_1y_1+a_2y_2)=[((r-1)/2)!]^{r+1}\prod_{(a_1,a_2)\in A_1}(a_1y_1+a_2y_2)^{(r-1)/2}.$
On the other hand, $\prod_{(a_1,a_2)\in A_1}(a_1y_1+a_2y_2)=y_1y_2^r-y_2y_1^r$ (see, e.g.,~\cite[Proof of Proposition 1.1]{Wi83}), so $c_N(W;\Z_r)=cp(y_1,y_2)$ where $c$ is a non-zero constant and \[p(y_1,y_2)=y_1^{d(r-1)/2}y_2^{d(r-1)/2}(y_1y_2^r-y_2y_1^r)^{m(r-1)/2}.\] 

As $\overline{Y_n}$ is the $N$-dimensional skeleton of $B\Z_r$, the long exact sequence for the pair $(B\Z_r, \overline{Y_n})$ shows that the map $i^\ast \colon H^k(B\Z_r;\Z_r)\cong\Z_r\rightarrow H^k(\overline{Y_n};\Z_r)$ induced from the inclusion $i\colon \overline{Y_n}\hookrightarrow B\Z_r$ is injective whenever $k\leq N$, while on the other hand it is the zero map for all $k>N$. Now suppose $k_1,k_2\geq 0$ with $k_1+k_2=2N$ and consider the resulting map \[H^{k_1}(B\Z_r;\Z_r)\otimes_{\Z_r} H^{k_2}(B\Z_r;\Z_r)\longrightarrow  H^{k_1}(\overline{Y_n};\Z_r)\otimes_{\Z_r} H^{k_2}(\overline{Y_n};\Z_r)\] on tensor products. This is the zero map unless $k_1=k_2=N$. Because we are in the category of modules over the field $\Z_r$ (i.e., $\Z_r$-vector spaces), this map is injective when $k_1=k_2=N$. Now let $N_0=N/2=d\frac{r-1}{2}+m\frac{r^2-1}{4}$. As $H^*(\overline{Y(r,n)};\Z_r)\cong H^\ast (\overline{Y_n};\Z_r)\otimes_{\Z_r} H^\ast(\overline{Y_n};\Z_r)$ by the K\"unneth formula, we conclude from the functoriality of the K\"unneth formula that $c_N(E'(r,n);\Z_r)=i^*(c_N(W;\Z_r))\in H^{2N}(\overline{Y(r,n)};\Z_r)$ is non-zero if and only if the coefficient of $y_1^{N_0}y_2^{N_0}$ in $p(y_1,y_2)$ is non-zero.

Now consider the $r$-ary expansion $b_0+b_1r+\cdots+b_\ell r^\ell$ of $m':=m(r-1)/2$. These coefficients are all even by assumption, so $m'$ is even as well. Now observe that the coefficient of $y_1^{N_0}y_2^{N_0}$ in $p(y_1,y_2)$ is $\binom{m'}{m'/2}$. However, $\binom{m'}{m'/2}\equiv\prod_{i=0}^\ell \binom{b_i}{b_i/2} \pmod{r}$ by Lucas's theorem (see, e.g., ~\cite{Fi47}) and so $\binom{m'}{m'/2}$ is non-zero in $\Z_r$. We conclude therefore that $c_N(E'(r,n);\Z_r)\neq 0$ and hence that $c_N(E(r,n);\Z_r)\neq 0$ as well. This completes the proof of Proposition~\ref{prop:Borsuk-Ulam}. 

\section{Piercing Distributions for Two Fans}
\label{sec:Two Fans}
In this section we prove our equidistribution theorem for two fans. As was the case with a single fan, this follows from a corresponding Dolnikov-type statement. 

\begin{theorem} 
\label{thm:Dolnikov 2} 
Let $d\geq 1$ be an integer, let $r\geq 3$ be a prime number, and let $m\geq 1$ be an integer such that all of the coefficients in the $r$-ary expansion of $m(r-1)/2$ are even. 

\begin{compactenum}[(a)]

\item \label{thm:real Dolnikov 2}
Suppose that $X$ is a set of $n\geq (r-1)(d+1)+\frac{m(r^2-1)}{2}+1$ affinely spanning points in $\R^{n-d-1}$. If $\F$ is a family of subsets of $X$ with $\chi(\KG^{r^2}(\F))\leq m$, then there exists a pair of conical $r$-fans $F_r^1$ and $F_r^2$ whose intersection contains $X$ such that every $A\in \F$ is intersected by two closed half-flats from some $F_r^i$. 

\item \label{thm:complex Dolnikov 2}
Suppose that $X$ is a set of $n\geq (r-1)(2d+1)+\frac{m(r^2-1)}{2}+1$ complex affinely spanning points in $\C^{n-d-1}$. If $\F$ is a family of subsets of $X$ such that $\chi(\KG^{r^2}(\F))\leq m$, then there exists a pair of complex regular $r$-fans $F_r^1$ and $F_r^2$ in $\C^{n-d-1}$ whose intersection contains $X$ such that every $A\in \F$ is intersected by two closed half-hyperplanes from some $F_r^i$. 
\end{compactenum}
\end{theorem}

\begin{proof}[Proof of Theorem~\ref{thm:equidistribute 2}]

We prove the real case of Theorem~\ref{thm:equidistribute 2}; the complex case is identical. So let $n\geq (r-1)(d+1)+m\frac{r^2-1}{2}$+1, let $m$ satisfy the condition of Theorem~\ref{thm:equidistribute 2}(\ref{thm:real equidstirubte 2}), and let $X=X_1\sqcup\cdots\sqcup X_m$ be an $m$-coloring of a set $X$ of $n$ affinely spanning points in $\R^{n-d-1}$. Corresponding to the coloring, we let $\F_k=\{A\subseteq X\mid |A\cap X_k|>|X_k|/r^2\}$ for each $k\in [m]$, and we let $\F=\cup_{k=1}^m \F_k$. We have $\chi(\KG^{r^2}(\F))\leq m$ by the pigeon-hole principle, and so by Theorem~\ref{thm:Dolnikov 2} there are two conical $r$-fans $F_r^1=\cup_{i=1}^r B_i^1$ and $F_r^2=\cup_{i=1}^r B_i^2$ in $\R^{n-d-1}$ such no element of $\F$ is completely contained in any of the intersections $\Int(B_1^i)\cap \Int(B_2^j)$. However, $\Int(B_1^i)\cap \Int(B_j^2)\cap X$ is a subset of $\Int(B_1^i)\cap \Int(B_2^j)$ for any $i,j\in [r]$, so none of the intersections $\Int(B_1^i)\cap \Int(B_2^j)$ is a member of $\F$. Thus $|\Int(B_1^i)\cap \Int(B_2^j)\cap X_k|\leq |X_k|/r^2$ for all $i,j\in [r]$ and all $k\in [m]$. \end{proof}

We now prove Theorem~\ref{thm:Dolnikov 2}. As with the case of a single fan, this is done by applying Gale duality to the linear case of Theorem~\ref{thm:Sarkaria 2}. 

\begin{proof}[Proof of Theorem~\ref{thm:Dolnikov 2}] Let $\K=\R$ or $\C$. If $\K=\R$ we let $n\geq (r-1)(d+1)+m\frac{r^2-1}{2}+1$, while if $\K=\C$ we let $n\geq (r-1)(2d+1)+m\frac{r^2-1}{2}+1$. Suppose now that $X=\{x_1,\ldots, x_n\}$ is a set of $n$ affinely spanning points in $\K^{n-d-1}$ and that $\F$ is family of non-empty subsets of $X$ with $\chi(\KG^{r^2}\F)\leq m$. As in the proof of Theorem~\ref{thm:Dolnikov}, we lift $X$ to the $\K$-affine hyperplane $\K^{n-d-1}\times\{1\}$ in $\K^{n-d}$ by setting $g_1=(x_1,1),\ldots, g_n=(x_n,1)$. By including the point $g_{n+1}=-\sum_{i=1}^n g_i$, the sequence $g_1,\ldots, g_{n+1}$ in $\K^{n-d}$ is the Gale transform of a sequence $a_1,\ldots, a_{n+1}$ of $n+1$ affinely spanning points in $\K^d$. As before, we let $f\colon \Delta_n\rightarrow \K^d$ be the unique (real) affine linear map which sends each $i\in [n+1]$ to $a_i\in \K^d$, and, corresponding to $\F$, we let $\F'$ be the family of all subsets $\{i_1,\ldots, i_k\}$ of $[n]$ such that $\{x_{i_1},\ldots, x_{i_k}\}$ is in $\F$. Thus $\chi(\KG^r(\F'))\leq m$. 

As $n$ is of the correct size, we may apply Theorem~\ref{thm:Sarkaria 2} to the restriction $f\mid_{\Delta_{n-1}}\colon \Delta_{n-1}\rightarrow \K^d$ of $f$. This gives two proper Tverberg-tuples $(\sigma_1,\ldots, \sigma_r)$ and $(\tau_1,\ldots, \tau_r)$ for $f\mid_{\Delta_{n-1}}$ such that none of the sets of the form $\Ver(\sigma_i\cap \tau_j)$ contains any element of $\F'$ as a subset. Now let $I^1_i=\Ver(\sigma_i)$ for all $i\in [r]$ and $I_j^2=\Ver(\tau_j)$ for each $j\in [r]$. By Lemma~\ref{lem:linear conical fans} applied to $f$, there exist two linear conical $r$-fans $F_r^1=\cup_{i=1}^r B_i^1$ and $F_r^2=\cup_{j=1}B_j^2$ in $\K^{n-d}$ such that $g_1,\ldots, g_{n+1}$ is distributed according to both the $I^1_i$ and the $I^2_j$. As each $I^1_i$ and each $I^j_2$ is a subset of $[n]$, $g_{n+1}$ lies in the center of both fans.

As in the proof of Theorem~\ref{thm:Dolnikov}, for each $k\in \{1,2\}$ and each $j\in [r]$ we let $(B_j^k)'$ be the intersection of $B_j^k$ with $\K^{n-d-1}\times \{1\}$ and we let $(F_i^k)'=\cup_{j=1}^k (B_j^k)'$. Letting $(B_j^k)''$ be the projection of $(B_j^k)'$ onto $\K^{n-d-1}\times \{0\}$, the identical argument to the one given there shows that each $(F_r^k)''=\cup_{j=1}^k (B_j^k)''$ is an $r$-fan in $\K^{n-d-1}$ (conical when $\K=\R$, and complex regular when $\K=\C$). As $g_1,\ldots, g_{n+1}$ is distributed according to both the $I^1_i$ and the $I^2_j$, we have that $X$ lies in the intersection of the two fans $(F_r^1)''\cap (F_r^2)''$ of these fans and moreover for each $k\in[n]$ that $x_k$ lies in $\Int(B_i^1)\cap \Int(B_j^2)$ if and only if $k\in I^1_j\cap I^2_j$.   We now show that every element of $\F$ is intersected by two half-flats from at least one of the $F_r^k$, or equivalently that no member of $\F$ is contained in any of the intersections of the form $\Int((B_i^1)'')\cap \Int((B_j^2)'')$. So let $i,j\in [r]$, let $A\in \F$, and let $A'$ be the corresponding element of $\F'$.  As $A'\notin I_i^1\cap I_j^2$, there is some $k\in[n]$ with $k\in A'$ and $k\notin I_i^1\cap I_j^2$. Thus $x_k\in A$ but $x_k\notin\Int((B_i^1)'')\cap \Int((B_j^2)'')$.\end{proof}

\section{Robustness for Typical Point Sets and Optimality of Fan Distributions}
\label{sec:optimality and general}

We conclude by showing that there is a dense open class of affinely spanning point sets for which the distributing $r$-fans of Theorems~\ref{thm:Dolnikov} and~\ref{thm:colorful} 
contain a robust number of points from the given point set. Lastly, we prove various cases of the near optimality of Theorem~\ref{thm:Dolnikov} with respect to dimension. 

\subsection{Typical Point Sets}
\label{sec:generic}

Let $a_1,\ldots, a_n$ be a sequence of $n$ ($\K$-) affinely spanning points in $\K^d$ and let $a_{n+1}=\frac{1}{n}\sum_{i=1}^n a_i$ be their average. Letting $A$ be the matrix of the Gale transform associated to $a_1,\ldots, a_{n+1}$, one may choose a basis $b_1,\ldots, b_{n-d}$ for $\ker A$ with $b_{n-d}=(1,\ldots, 1, -n)$, and, for any such choice of such basis, the Gale dual of $a_1,\ldots, a_{n+1}$ is of the  form $g_1=(x_1,1),\ldots, g_n=(x_n,1),g_{n+1}=-\sum_{i=1}^n g_i=-\sum_{i=1}^n (x_i,n)$. We say that the sequence $x_1,\ldots, x_n$ of affinely spanning points in $\K^{n-d-1}$ \emph{corresponds} to $a_1,\ldots, a_n$. Moreover, by letting $g_i=(x_i,1)$ for $1\leq i \leq n$ and $g_{n+1}=-\sum_{i=1}^n g_i$, it is not difficult to see that any sequence $x_1,\ldots, x_n$ of affinely spanning points in $\K^{n-d-1}$ corresponds to some sequence $a_1,\ldots, a_n$ of affinely spanning points in $\K^d$.

Our condition on affinely spanning point sets $X$ with $n$ elements in $\K^{n-d-1}$ is essentially the Gale dual of a generic condition on affinely spanning point sets $A$ of $n$ elements in $\K^d$. We recall that a sequence $a_1,\ldots, a_n$ of $n$ points in $\K^d$ is in \emph{strong general position}~\cite{PS14} if for any $1\leq r\leq n$ and any $r$ pairwise disjoint subsets $I_1,\ldots, I_r$ of $[n]$ one has that the (real) codimension of the intersection of the real affine hulls $\Aff(A_j)$ of the $A_j=\{a_i\mid i\in I_j\}$ equals the sum of their respective codimensions, that is,  \[\codim(\cap_{j=1}^r \Aff(A_j))=\sum_{j=1}^r \codim(\Aff(A_j)).\]  

 The central property of strong general position for our purposes is that it immediately implies that any Tverberg $r$-tuple for a linear map $f\colon \Delta_{n-1}\rightarrow \R^d$ determined by points $a_1,\ldots, a_n$ in strong general position must satisfy $\sum_{j=1}^r |\Ver(\sigma_j)|\geq (r-1)(d+1)+1$. 

 Let $n\geq d+1$. We shall say that an affinely spanning point set $X=\{x_1,\ldots, x_n\}$ in $\K^{n-d-1}$ is \emph{typical} if $x_1,\ldots, x_n$ corresponds to a sequence $a_1,\ldots, a_n$ in $\K^d$ in strong general position. While we expect that typical point sets are generic, we shall content ourselves with showing that they form an open and dense subset of the space of all affinely spanning point sets of size $n$ in $\K^{n-d-1}$. Specifically, let $\Conf_n(\K^{n-d-1})=\{(x_1,\ldots, x_n)\in (\K^{n-d-1})^n\mid x_i\neq x_j \,\,\text{for all} \,\, i\neq j\}$ be the configuration space of sequences of $n$ pairwise distinct points in $\K^{n-d-1}$. If $S$ denotes the subspace of $(\K^d)^n$ consisting of all affinely-spanning sequences of length $n$ and $U$ is the subset of $S$ consisting of all such sequences which are in strong general position, then the generiticity of strong general position in $\K^d$ implies in particular that $U$ is an open dense subset of $S$. An analogous situation holds for typical point sets of size $n$ in $\K^{n-d-1}$. 

\begin{proposition}
\label{prop:generic}
Let $S'$ be the subset $(\K^{n-d-1})^n$ consisting of all affinely spanning sequences of length $n$ in $\K^{n-d-1}$ and let $U'$ be the subset of $S'$ consisting of all such sequences which correspond to a sequence of $n$ points in $\K^d$ in strong general position. Then $U'\cap \Conf_n(\K^{n-d-1})$ is an open dense subset of $S'\cap  \Conf_n(\K^{n-d-1})$. 
\end{proposition}

Proposition~\ref{prop:generic} is an immediate consequence of the following ``continuity'' lemma for the Gale transform.

\begin{lemma}
\label{lem:continuity}

Let $n$ and $d$ be positive integers such that $n\geq d+1$. Suppose that $x=(x_1,\ldots, x_n)$ is a sequence of $n$ affinely spanning points in $\K^{n-d-1}$ which corresponds to a sequence $a=(a_1,\ldots, a_n)$ of $n$ affinely spanning points in $\K^d$.

\begin{compactenum}[(a)]

\item \label{lem:density} Suppose that for each $m\geq 1$ there is a sequence $a_m=(a_1^m,\ldots, a_n^m)$ of $n$ affinely spanning points in $\K^d$ such that $\lim_{m\to\infty} a_m=a$. Then for each $m$ there is a sequence $x_m=(x_1^m,\ldots, x_n^m)$ of $n$ affinely spanning points in $\K^{n-d-1}$ such that each $x_m$ corresponds to $a_m$ and $\lim_{m\to\infty} x_m=x$.  

\item \label{lem:open}
Suppose that for each $m\geq 1$ there is a sequence $x_m=(x_1^m,\ldots, x^m_n)$ of $n$ affinely spanning points in $\K^{n-d-1}$ such that $\lim_{m\to \infty} x_m=x$. Then for each $m$ there is a sequence $a_m=(a_1^m,\ldots, a_m^n)$ of $n$ affinely spanning point sets in $\K^d$ such that each $x_m$ corresponds to $a_m$ and $\lim_{m\to\infty}a_m=a$. 

\end{compactenum}
\end{lemma}

\begin{proof}[Proof of Proposition~\ref{prop:generic}]

As $\Conf_n(\K^{n-d-1})$ is open in $(\K^{n-d-1})^n$ it suffices to prove that $U'$ is an open dense subset of $S'$. 

We first prove density. Suppose that $x=(x_1,\ldots, x_n)$ is a sequence of affinely spanning points in $\K^{n-d-1}$, so that $x$ corresponds to some sequence $a=(a_1,\ldots, a_n)$ of affinely spanning points in $\K^d$. By the density of $U$ in $S$, there is a sequence $(a_m)_{m=1}^\infty$ of points in $U$ which converges to $a$. By Lemma~\ref{lem:continuity}(\ref{lem:density}), there is a sequence $(x_m)_{m=1}^\infty$ of points in $S'$ such that each $x_m$ corresponds to $a_m$ and $x=\lim_{m\to \infty} x_m$. As each $a_m$ lies in $U$ we have that each $x_m$ lies in $U'$, which establishes density. 

That $U'$ is open in $S'$ follows similarly. Suppose for a contradiction that $U'$ is not open in $S'$. Thus there is some $x=(x_1,\ldots, x_n)\in U'$ corresponding to some  $a=(a_1,\ldots, a_n)\in U$ and a sequence $(x_m)_{m=1}^\infty$ in $S'\setminus U'$ which converges to $x$. By Lemma~\ref{lem:continuity}(\ref{lem:open}), there is a sequence $(a_m)_{m=1}^\infty$ in $S$ such that $x_m$ corresponds to $a_m$ and $\lim_{m\to\infty} a_m=a$. As each $x_m$ lies in $S'\setminus U$ we have that each $a_m$ lies in $S\setminus U$. This contradicts that $U$ is open in $S$, which completes the proof.
\end{proof}

We now prove Lemma~\ref{lem:continuity}.

\begin{proof}[Proof of Lemma~\ref{lem:continuity}]

To prove (a), suppose that $x=(x_1,\ldots, x_n)$ is a sequence of affinely spanning points in $\K^{n-d-1}$ which corresponds to some sequence $a=(a_1,\ldots, a_n)$ of affinely spanning points in $\K^d$. Let $A$ be the $(d+1)\times (n+1)$ matrix in the construction of the Gale dual for $a_1,\ldots, a_n,a_{n+1}=\frac{1}{n}\sum_{i=1}^n a_i$ in $\K^d$, and let $B$ be the $(n-d)\times (n+1)$ matrix whose columns are the conjugates of $g_1=(x_1,1),\ldots, g_n=(x_n,1), g_{n+1}=-\sum_{i=1}^n g_i$. The rows of $B$ form a basis for $\ker A$ by the construction of the Gale transform. For each $a_m=(a_1^m,\ldots, a_n^m)$, let $a_{n+1}^m=\frac{1}{n}\sum_{i=1}^n a_i^m$ and let $A_m$ be the matrix in the construction of the Gale dual of  $a_1^m,\ldots, a_{n+1}^m$.

For each $m$ choose an orthogonal basis $b_1^m,\ldots, b_{n-d}^m$ for $\ker A_m$ such that $b_1^m,\ldots, b_{n-d-1}^m$ are unit vectors and $b_{n-d}^m=(1,\ldots, 1,-n)$, and let $B_m$ be the matrix whose rows are the $b_i^m$. After passing to convergent subsequence if necessary, we may assume that each $b_i^m$ converges to some $b_i'$ with $b_{n-d}'=(1,\ldots, 1,-n)$. Since the $a_m$ converge to $a$ we have that the $A_m$ converge to $A$ in the Frobenius norm, from which it is straightforward to show that $b_1',\ldots, b_{n-d-1}', b'_{n-d}$ are elements of $\ker A$. By the choice of basis of $\ker A_m$ it is likewise readily seen that the $b_i'$ form a basis for $\ker A$. We let $B'$ be the matrix whose rows are the $b_i'$. Thus $\lim_{m\to \infty} B_m=B'$.
 
 Now let $T$ be the invertible $(n-d)\times (n-d)$ matrix such that $TB'=B$. As $b_{n-d}'=b_{n-d}$, the last row of $T$ is the $(n-d)$-th standard basis vector $\mathbf{e}_{n-d}\in \K^{n-d}$. For each $m$ the rows of $TB_m$ are still a basis for $\ker A_m$, and since the last row of $T$ is $\mathbf{e}_{n-d}$ the last row of $TB_m$ is still $(1,\ldots, 1,-n)$. The Gale dual of $a_1^m,\ldots, a_n^m,a_{n+1}^m=\frac{1}{n}\sum_{i=1}^na^m_i$ with respect to this basis for $\ker A_m$ is therefore of the form $g_1^m=(x_1^m,1) ,\ldots, g_n^m=(x_n^m,1), g^m_{n+1}=-\sum_{i=1}^n g_i^m$. Thus each $x_m=(x_1^m,\ldots, x_n^m)$ corresponds to $a_m$. Since the $B_m$ converge to $B'$ we have that the $TB_m$ converge to $B$ and therefore that $\overline{TB_m}$ converges to $\overline{B}$. Thus the $g_i^m$ converge to $g_i$ for each $i\in [n+1]$ and hence the $x_m$ converge to $x$.

 The proof of part (b) is analogous, where now we reverse the steps in the construction of the Gale dual as in the proof of Proposition~\ref{prop:Gale basic}. So suppose that $x=(x_1,\ldots, x_n)$ is a sequence of affinely spanning points in $\K^{n-d-1}$ which corresponds to some sequence  $a=(a_1,\ldots, a_n)$ of affinely spanning points in $\K^d$, and moreover that there is a sequence $(x_m)_{m=1}^\infty$ of affinely spanning points in $\K^d$ which converges to $x$. Let $A$ be the $(d+1)\times (n+1)$ matrix in the construction for the Gale transform of $a_1,\ldots,a_n, a_{n+1}=\frac{1}{n}\sum_{i=1}^n a_i$ and let $B$ be the $(n-d)\times (n+1)$ matrix whose columns are the conjugates of $g_1=(x_1,1),\ldots, g_n=(x_n,1), g_{n+1}=-\sum_{i=1}^n g_i$. We have that $\Row B=\ker A$ by the construction of the Gale transform, so $\ker B=\Row A$ and thus the rows $w_1,\ldots, w_d,w_{d+1}=(1,\ldots, 1)$ of $A$ form a basis for $\ker B$. For each $m$, let $B_m$ be the matrix whose columns are the conjugates of $g_1^m=(x_1^m,1),\ldots, g_n^m=(x_n^m,1),g^m_{n+1}=-\sum_{i=1}^n g^m_i$ and choose an orthogonal basis $w_1^m,\ldots,w_{d+1}^m$ for $\ker B_m$ such that $w^m_{d+1}$ is the all 1 vector and the other $w_i^m$ have norm 1. After passing to a subsequence if necessary we may assume that each $w_i^m$ converges to some $w_i'$, where $w_{d+1}'$ is the all one vector. As the $x_m$ converge to $x$ we have that the $\overline{B_m}$ converge to $\overline{B}$ and so that the $B_m$ converge to $B$. It follows as before that $w_1',\ldots, w_{d+1}'$ is a basis for $\ker B$. 
 
 Now let $A'$ be the matrix whose rows are $w_1',\ldots, w'_{d+1}$, and likewise for each $m$ let $A_m$ be the matrix whose rows are $w_1^m,\ldots, w_{d+1}^m$. We let $T$ be the invertible $(d+1)\times (d+1)$ matrix such that $TA'=A$. Since the $w_i^m$ converge to $w_i'$, the $A_m$ converge to $A'$ and so the $TA_m$ converge to $A$. For each $m$ the rows of $TA_m$  still form a basis for $\ker B_m$, and, since $w'_{d+1}=w_{d+1}$, the last row of $T$ is $\mathbf{e}_{d+1}$ and so the last row of $TA_m$ is still the all one vector. Finally, for each $m$ let $a_1^m,\ldots, a_{n+1}^m$ be the columns of the matrix formed by removing the final row from $TA_m$ and let $a_m=(a_1^m, \ldots, a_n^m)$. The proof of Proposition~\ref{prop:Gale basic} shows that $g_1^m,\ldots, g_{n+1}^m$ is the Gale dual of $a_1^m,\ldots, a_{n+1}^m$ obtained by letting the rows of $B_m$ serve as the chosen basis for $\ker TA_m$. Because the last row of $B_m$ is $(1,\ldots, 1,-n)$ we have that $u_{n+1}=\frac{1}{n}\sum_{i=1}^n u_i$ for each $u=(u_1,\ldots, u_{n+1})\in \ker B_m$ and therefore that $a_{n+1}^m=\frac{1}{n}\sum_{i=1}^n a^m_i$. Thus each $x_m$ corresponds to $a_m$. Since the $TA_m$ converge to $A$ we have that the $a_m$ converge to $a$.  
\end{proof}

 Next, we show that for a typical point set the open half-flats of the $r$-fans of Theorems~\ref{thm:Dolnikov} and ~\ref{thm:colorful} contain a sizable number of points. 

\begin{proposition}
\label{prop:generic distribution} Let $d,m\geq 1$ be integers and let $\K=\R$ or $\C$. Let $n\geq (r-1)(d+m+1)+1$ if $\K=\R$, let $n\geq (r-1)(2d+m+1)+1$ if $\K=\C$, and suppose that $X$ is a typical set of $n$ points in $\K^{n-d-1}$. Then the $r$-fans $F_r=\cup_{j=1}^r B_j$ guaranteed by Theorem ~\ref{thm:Dolnikov} and~\ref{thm:colorful} satisfy the property that $\sqcup_{j=1}^r \Int(B_j)$ contains at least $(r-1)(d+1)+1$ points of $X$ when $\K=\R$, and at least $(r-1)(2d+1)+1$ points of $X$ when $\K=\C$. 
\end{proposition}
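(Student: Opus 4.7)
The plan is to trace the $r$-fan back through Gale duality to a proper $r$-Tverberg tuple whose index sets $I_j\subseteq [n]$ partition exactly the points of $X$ lying in the open half-flats, and then to use strong general position to force $\sum_j |I_j|$ to attain the Tverberg lower bound.

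First I would recall from the proofs of Theorems~\ref{thm:Dolnikov} and~\ref{thm:colorful} that the distributing fan $F_r=\cup_{j=1}^r B_j$ arises as the hyperplane-level projection $F_r''$ of a linear $r$-fan in $\K^{n-d}$, which in turn (via Lemma~\ref{lem:Real Linear Fans} or Lemma~\ref{lem: Linear Complex Fans}) encodes a proper $r$-Tverberg tuple $(\sigma_1,\ldots,\sigma_r)$ for the real linear map $f\colon\Delta_n\to\K^d$ whose vertex images $a_1,\ldots,a_{n+1}\in\K^d$ are Gale dual to the lifted sequence $g_1=(x_1,1),\ldots,g_n=(x_n,1),g_{n+1}=-\sum_{i=1}^n g_i$. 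Setting $I_j=\Ver(\sigma_j)$, the distribution property gives $x_i\in\Int(B_j)$ if and only if $i\in I_j$. Because $g_{n+1}$ lies on the center of the linear fan we have $n+1\notin\bigsqcup_j I_j$, and hence $\bigsqcup_j I_j\subseteq[n]$, which in particular gives
\[
\left|\,X\cap\bigsqcup_{j=1}^r\Int(B_j)\,\right|=\sum_{j=1}^r|I_j|.
\]
The proposition therefore reduces to showing $\sum_j|I_j|\geq (r-1)(D+1)+1$, where $D=d$ if $\K=\R$ and $D=2d$ if $\K=\C$.

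Next I would invoke typicality, which says that $a_1,\ldots,a_n$ are in strong general position in $\K^d$ (viewed as $\R^D$). Writing $A_j=\{a_i\mid i\in I_j\}$, the properness of the Tverberg tuple furnishes strictly positive real scalars $t_i$ with $\sum_{i\in I_j}t_ia_i=p$ a common point for every $j\in[r]$, so $p\in\bigcap_{j=1}^r\Aff(A_j)$ and this intersection has real codimension at most $D$ in $\K^d$. Since each $I_j\subseteq[n]$, strong general position of $a_1,\ldots,a_n$ applies to yield
\[
\sum_{j=1}^r\codim(\Aff(A_j))=\codim\Bigl(\bigcap_{j=1}^r\Aff(A_j)\Bigr)\leq D,
\]
while the ordinary general position component of typicality gives $\codim(\Aff(A_j))=\max(D-|I_j|+1,0)$.

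Finally, I would extract the numerical bound by elementary counting. Let $s$ denote the number of $j\in[r]$ with $|I_j|\leq D+1$. The displayed codimension inequality then becomes $\sum_{j:\,|I_j|\leq D+1}(D-|I_j|+1)\leq D$, which rearranges to $\sum_{j:\,|I_j|\leq D+1}|I_j|\geq s(D+1)-D$. Combining this with $|I_j|\geq D+2$ for the remaining $r-s$ indices,
\[
\sum_{j=1}^r|I_j|\geq s(D+1)-D+(r-s)(D+2)=r(D+1)-D+(r-s)\geq (r-1)(D+1)+1,
\]
as required. The main point requiring vigilance -- the only real obstacle I anticipate -- is confirming that the Tverberg indices lie entirely in $[n]$ so that the typicality hypothesis (which only concerns $a_1,\ldots,a_n$) is available, but as noted above this is automatic from $g_{n+1}$ lying on the fan's center.
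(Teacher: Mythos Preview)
Your proposal is correct and follows essentially the same route as the paper: trace the distributing fan back through Gale duality to the proper Tverberg tuple $(\sigma_1,\ldots,\sigma_r)$ with $I_j=\Ver(\sigma_j)\subseteq[n]$, observe that $|X\cap\Int(B_j)|=|I_j|$, and then invoke strong general position of $a_1,\ldots,a_n$ to conclude $\sum_j|I_j|\geq (r-1)(D+1)+1$. The only difference is one of detail: the paper treats the inequality $\sum_j|\Ver(\sigma_j)|\geq (r-1)(D+1)+1$ as an immediate consequence of strong general position (stated as such earlier in Section~\ref{sec:generic}), whereas you supply the explicit codimension-counting argument; your computation there is accurate.
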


\begin{proof}[Proof of Proposition~\ref{prop:generic distribution}]
Suppose that $X=\{x_1,\ldots, x_n\}$ is a typical point set in $\K^{n-d-1}$. Thus $x_1,\ldots, x_n$ corresponds to a sequence of points $a_1,\ldots, a_n$ in $\K^d$ in strong general position. The distributing $r$-fan $F_r=\cup_{j=1}^r$ guaranteed by the proofs of Theorem~\ref{thm:Dolnikov} and Theorem~\ref{thm:colorful} was given by a Tverberg $r$-tuple $(\sigma_1,\ldots, \sigma_r)$ for the linear map $f\colon \Delta_{n-1}\rightarrow \K^d$ determined by the $a_i$. Moreover, we had $|\Int(B_j)\cap X|=|\Ver(\sigma_j)|$ for all $j\in [r]$. As the $a_i$ are in strong general position, we have $\sum_{j=1}^r |\Int(B_j)\cap X|=\sum_{j=1}^r |\Ver(\sigma_j)|\geq (r-1)(d+1)+1$ when $\K=\R$ and $\sum_{j=1}^r |\Int(B_j)\cap X|\geq (r-1)(2d+1)+1$ when $\K=\C$.
\end{proof}

\subsection{Asymptotics of Theorem~\ref{thm:Dolnikov}}
\label{sec:optimal}

Finally, we prove the near optimality of the real case of Theorem~\ref{thm:Dolnikov} with respect to dimension for all prime powers $r\geq 3$ when $m\geq 2$, and likewise in the complex setting when $r=2$ and $m\geq 1$. For this, it suffices to consider the near optimality of Theorems~\ref{thm:real equidistribute} and ~\ref{thm:complex equidistribute}, respectively.

\begin{proposition}
\label{prop:optimal} Let $d\geq 1$ and $m\geq 2$ be integers. 

\begin{compactenum}[(a)]
\item Let $r\geq 3$, let $k\geq 0$, and let $n=(r-1)(d+m+1)+k+1$. If $\ell>2+\frac{k+2}{r-2}$, then there exists a set $X$ of $n+\ell$ affinely spanning points in $\R^{n-d-1}$ and an $m$-coloring of $X$ which cannot be equidistributed by any conical $r$-fan.

\item Let $k\geq 0$ and let $n=2(d+1)+m+k$. If $\ell>k+3$ then there exists a set $X$ of $n+\ell$ complex affinely spanning points in $\C^{n-d-1}$ and an $m$-coloring of $X$ which cannot be equidistributed by any complex regular $2$-fan in $\C^{n-d-1}$.
\end{compactenum}
\end{proposition}

\begin{proof}[Proof of Proposition~\ref{prop:optimal}] 

In the real case, let $n=(r-1)(d+m+1)+k+1$ where $k\geq 0$ and assume that $\ell>2+\frac{k+2}{r-2}$. Suppose that $a_1,\ldots, a_n, a_{n+\ell-1}$ are points in strong general position in $\R^{d+\ell-1}$ and let $x_1,\ldots, x_{n+\ell-1}$ be their Gale dual in $\R^{n-d-1}$. Since the $x_i$ linearly span $\R^{n-d-1}$ and sum to zero they also affinely span $\R^{n-d-1}$. Consider the sequence $a_1'=(a_,1),\ldots, a_{n+\ell-1}'=(a_{n+\ell-1},1)$ in $\R^{d+\ell}$ and append to this sequence any point $a_{n+\ell}'$ in $\R^{d+\ell}$ such that $a_1',\ldots, a'_{n+\ell}$ affinely spans $\R^{d+\ell}$. We now claim that, for a suitable choice of basis, the Gale transform of the $a_i'$ is $x_1,\ldots, x_{n+\ell-1}, x_{n+\ell}$, where $x_{n+\ell}=0$. To see this, let $A$ be the $(d+\ell)\times (n+\ell-1)$ matrix in the construction of the Gale transform of $a_1,\ldots, a_{n+\ell-1}$, and let $b_1,\ldots, b_{n-d-1}\in\R^{n+\ell-1}$ be a choice of basis for $\ker A$ so that $x_1,\ldots, x_{n+\ell-1}$ are the columns of the matrix $B$ whose rows are the $b_i$. Letting $A'$ be the $(d+\ell+1)\times (n+\ell)$ matrix in the construction of the Gale transform of the $a_i'\in \R^{d+\ell}$, we may choose $b_1'=(b_1,0),\ldots, b_{n-d-1}'=(b_{n-d-1},0)$ as a basis for $\ker A'$. Thus the Gale transform of $a_1', \ldots, a'_{n+\ell}$ in $\R^{n-d-1}$ is $x_1,\ldots, x_{n+\ell-1},x_{n+\ell}=0$. 

We now show that there exists an $m$-coloring of $X=\{x_1,\ldots, x_{n+\ell}\}$ which cannot be equidistributed by any conical $r$-fan in $\R^{n-d-1}$. Namely, let $X=X_1\sqcup\cdots\sqcup X_m$ be an $m$-coloring such that $|X_i|=r-1$ for all $i\geq 2$ and $x_{n+\ell}=0$ is in $X_m$. Now suppose that $F_r=\cup_{j=1}^r B_j$ is a conical $r$-fan which equidistributes this coloring. As $|\Int(B_j)\cap X_i|\leq |X_i|/r$ for all $i\in [m]$ and $j\in [r]$, all the points of $X\setminus X_1$ must lie in the center $C$ of $F_r$. In particular, $F_r$ is linear. Now let $f\colon\Delta_{n+\ell-1}\rightarrow \R^{d+\ell}$ be the linear map determined by the $a_i'$ and let $[n+\ell]=C_1\sqcup\cdots\sqcup C_m$ be the $m$-coloring of the vertex set of $\Delta_{n+\ell-1}$ corresponding to the $m$-coloring of $X$. By Lemma~\ref{lem:linear conical fans} there is a proper Tverberg $r$-tuple $(\sigma_1,\ldots, \sigma_r)$ for $f$ such that each $I_j:=\Ver(\sigma_j)$ corresponds to $X\cap \Int(B_j)$ and $[n+\ell-1]\setminus \sqcup_{j=1}^r I_j$ corresponds to $C\cap X$. In particular, $|C_i\cap \Ver(\sigma_j)|=|X_i\cap \Int(B_j)|$ for all $j\in [r]$ and all $i\in [m]$. Thus each $I_j$ is contained in the color class $C_1$, which consists of $n+\ell-(m-1)(r-1)=(r-1)(d+2)+k+\ell+1$ points. Letting $\Delta_{C_1}$ be the simplex generated by $C_1$, we have that the image of the restriction $f\colon\Delta_{C_1}\rightarrow \R^{d+\ell}$ of $f$ is contained in $\R^{d+\ell-1}\times \{1\}$. As the points $a_1'=(a_1,1),\ldots, a_{n+\ell-1}'=(a_{n+\ell-1},1)$ are in strong general position in $\R^{d+\ell-1}\times \{1\}$ and $k<(r-1)(\ell-2)-\ell$, we have 
\begin{align*} 
\codim(\cap_{j=1}^r \Aff(f(\sigma_j))) &  =\sum_{j=1}^r \codim(\Aff(f(\sigma_j))\\ &=r(d+\ell)-\sum_{j=1}^r |I_j|
 \geq r(d+\ell)-[(r-1)(d+2)+k+\ell+1]>d+\ell-1.
\end{align*}
Thus $\cap_{j=1}^r \Aff(f(\sigma_j))=\emptyset$, contradicting that $(\sigma_1,\ldots, \sigma_r)$ is a Tverberg  $r$-tuple for $f$. Therefore there is no conical $r$-fan in $\R^{n-d-1}$ which equidistributes the $m$-coloring of $X$. 

The proof in the complex case when $r=2$ is analogous. Let $n=2(d+1)+m+k$ where $k\geq0$. As before, there exist $n+\ell$ complex affinely spanning points $a_1,\ldots, a_{n+\ell}$ in $\C^{d+\ell}$ whose Gale dual $x_1,\ldots, x_{n+\ell}$ in $\C^{n-d-1}$ satisfies $x_{n +\ell}=0$ and for which $a_1,\ldots, a_{n+\ell-1}$ are in strong general position in $\C^{d+\ell-1}\times \{1\}$. In particular, $X$ complex affinely spans $\C^{n-d-1}$. As before, we choose an $m$-coloring of $X$ so that $|X_i|=1$ for all $2\leq i\leq m$ and $0\in X_m$. If $F_2$ were a complex regular $2$-fan which equidistributes the $m$-coloring, then $F_2$ would be linear and so by Remark~\ref{rem:equivalence} the linear map $f\colon \Delta_{n+\ell-1}\rightarrow \C^{d+\ell}$ determined by the $a_i$ would have a Tverberg tuple $(\sigma_1,\sigma_2)$ where each $I_j:=\Ver(\sigma_j)$ is contained in $C_1$. Thus $f|_{\Delta_{C_1}}\colon \Delta_{C_1}\rightarrow \C^{d+\ell}$  maps to $\C^{d+\ell-1}\times \{1\}$. As $a_1,\ldots, a_{n+\ell-1}$ are in strong general position in $\C^{d+\ell-1}\times \{1\}$, the condition that $k<\ell-3$ gives $\codim(\Aff(f(\sigma_1\cap\sigma_2))> 2(d+\ell-1)$. We conclude as before that no equidistributing complex regular $2$-fan exists. 
\end{proof}

Corollary~\ref{cor:values} is now an immediate consequence of Theorem~\ref{thm:real equidistribute} and Proposition~\ref{prop:optimal}.

\begin{proof}[Proof of Corollary~\ref{cor:values}] Let $c=(r-1)(m+1)$ and let $d=(r-2)s+t+c$, where $m\geq 2$, $s\geq 1$, and $t\in\{0,\ldots, r-3\}$. Letting $n=d+s+1$ and applying Theorem~\ref{thm:real equidistribute}, we have that any $m$-coloring of any affinely spanning set of $n=(r-1)(s+m+1)+t+1$ points in $\R^{n-s-1}=\R^d$ can be equidistributed by a conical $r$-fan in $\R^d$. Thus $n(r,m,d)\geq n$. On the other hand, Proposition~\ref{prop:optimal} shows that $n(r,m,d)\leq n+\ell-1$ so long as $\ell>2+\frac{t+2}{r-2}$. But $0\leq t\leq r-3$, so $n(3,m,d)\leq n+4$ and $n(r, m, d)\leq n+3$ when $r>3$.
\end{proof}

\begin{remark} 
\label{rem:complex}
As in the real case, we denote by $n_\C(r,m,d)$ the maximum number $n$ such that any $m$-coloring of any set of $n$ complex affinely spanning points in $\C^d$ can be equidistributed by a complex regular $r$-fan in $\C^d$. Letting $d=s+m+1$ with $s\geq 1$ and setting $n=d+s+1$, Theorem~\ref{thm:complex equidistribute} together with Proposition~\ref{prop:optimal} give the bounds $n\leq n_\C(2,m,d)\leq n+3$ for complex regular $2$-fans when $m\geq 2$. When $m=1$, Theorem~\ref{thm:complex equidistribute} guarantees that any collection of $n=2s+3$  affinely spanning points in $\C^d\cong \R^{2d}$ can be equidistributed by a complex regular $2$-fan. On the other hand, no more than $2d=2s+4$ points in general position in $\R^{2d}$ can lie on an affine hyperplane. Thus $n\leq n_\C(2,1,d)\leq n+1$.  \end{remark}

\section*{Acknowledgements} 

The authors acknowledge the support provided by the 2024 Bard Science Research Institute. The authors are especially grateful to the anonymous referees for their thoughtful comments and suggestions which greatly improved the exposition of the manuscript. They also thank Florian Frick for helpful conversations.


\begin{thebibliography}{}

 \bibitem{At61} M.F. Atiyah. Characters and Cohomology of Finite Groups. \textit{Inst. Hautes \v Etudes Sci. Publ. Math.}, Vol. 9 (1961) 23--64. 

\bibitem{BL92} I. B\'ar\'any and D. G. Larman. A colored version of Tverberg?s theorem. \textit{J. London Math. Soc.}, Vol. 45 (1992) 314--320

\bibitem{BM01} I. B\'ar\'any and J. Matou\v sek. Simultaneous partitions of measures by $k$-fans. \textit{Discrete Comput. Geom.}, Vol. 25 (2001) 317--334. 

\bibitem{BM02} I. B\'ar\'any and J. Matou\v sek. Equipartition of two measures by a $4$--fan. \textit{Discrete Comput. Geom.}, Vol. 27 (2002) 293--301.

\bibitem{BBS81} I. B\'ar\'any, S.B. Shlosman, and A. Sz\"ucs. On a topological generalization of a theorem of Tverberg. \textit{J. London Math. Soc.}, Vol. 23, No. 2 (1981) 158--164. 

\bibitem{BZ04} W. A. Beyer and A. Zardecki. The early history of the ham sandwich theorem. \textit{Amer. Math. Monthly}, Vol. 111, No. 1 (2004) 58--61.

\bibitem{BFHZ16} P. Blagojevi\'c, F. Frick, A. Hasse, and G. Ziegler. Hyperplane mass partitions via relative equivariant obstruction theory, \textit{Doc. Math.}, Vol. 21 (2016) 735--771.

\bibitem{BFHZ18} P. Blagojevi\'c, F. Frick, A. Hasse, and G. Ziegler. Topology of the Gr\"unbaum--Hadwiger--Ramos hyperplane mass partition problem, \emph{Trans. Amer. Math. Soc.}, Vol. 370 (2018), 6795--6824. 

\bibitem{BMZ15}  P. Blagojevi\'c, B Matschke, and G. Ziegler. Optimal bounds for the colored Tverberg problem, \emph{J. Eur. Math. Soc.}, Vol. 17, No. 4 (2015), 739--754.

\bibitem{Do92} V.L. Dolnikov. A generalization of the ham sandwich theorem, \textit{Math. Notes}, Vol. 52 (1992) 771--779. 

\bibitem{Ga56} D. Gale. Neighboring vertices on a convex polyhedron, \textit{Linear inequalities and related system}, Annals of Mathematics Studies, No. 38, Princeton University Press, Princeton, N.J. (1956) 255--263.

\bibitem{Fi47} N.J. Fine. Binomial Coefficients Modulo a Prime, \textit{Amer. Math. Monthly}, Vol. 54 (1947)  589--592.

\bibitem{FS24} F. Frick, S. Murray, S. Simon, and L. Stemmler. Transversal generalizations of hyperplane equipartitions, \textit{Int. Math. Res. Not. IMRN}, Vol. 2024, No. 7 (2024) 5586--5618. 

\bibitem{FH04} W. Fulton and J. Harris. \textit{Representation Theory: A First Course. Graduate Texts in Mathematics}. Springer, New York (2004).

\bibitem{Gr60} B. Gr\"unbaum. Partitions of mass-distributions and convex bodies by hyperplanes, \textit{Pacific J. Math.}, Vol. 10 (1960) 1257--1261.

\bibitem{Ha66} H. Hadwiger. Simultane vierteilung zweier k\"orper. \textit{Arch. Math.} (Basel) Vol. 17 (1966) 274--278.

\bibitem{Ha00} A. Hatcher. \textit{Algebraic Topology}, Cambridge Univeristy Press. Cambridge (2000). 

\bibitem{Hu94} D. Husemoller. \textit{Fiber Bundles}. Springer-Verlag (1994).

\bibitem{MW15}  I. Mabillard and U. Wagner. Eliminating higher-multiplicity intersections, I. An analogue of the Whitney trick. arXiv:1508.02349 [math.GT]. 

\bibitem{MLVZ06} P. Mani--Levitska, S. Vre\'cica, and R. \v Zivaljevi\'c. Topology and combinatorics of partitions of masses by hyperplanes, \textit{Adv. Math.}, Vol. 207 (2006) 266--296.

\bibitem{Ma12} J. Matou\v sek. \textit{Lectures on Discrete Geometry}, Graduate Texts in Mathematics Vol. 212, Springer Science \& Business Media (2013). 

\bibitem{Mil74} J.W. Milnor and J.D. Stasheff. \textit{Characteristic Classes}. Princeton University Press (1974).

\bibitem{Oz87} M. \"Ozaydin. Equivariant maps for the symmetric group. Preprint, http://digital.library.wisc.edu/1793/63829 (1987).

\bibitem{PS14} M. A. Perles and M. Sigron. Strong general position. arXiv:1409.2899 [math.CO]

\bibitem{Ra21} J. Radon. Mengen konvexer K\"orper, die einen gemeinsamen Punkt enthalten. \textit{Math. Ann.}, Vol. 83 (1921) 113--115.

\bibitem{Ra96} E. A. Ramos. Equipartition of mass distributions by hyperplanes, \textit{Discrete Comput. Geom.}, Vol. 15 (1996) 147--167.

\bibitem{RPS22} E. Rold\'an-Pensado and P. Sober\'on. A survey of mass partitions, \textit{Bull. Amer. Math. Soc.}, Vol. 59, No. 2 (2022) 227--267.

\bibitem{SaSo25} N. Sadovek and P. Sober\'on. Complex analogues of the Tverberg-Vre\'cica conjecture and central transversal theorems, \textit{Trans. Amer. Math. Soc.} (2025).

\bibitem{Sa90} K. S. Sarkaria. A generalized Kneser conjecture, \textit{J. Combin. Theory, Ser. B}, Vol. 49, No. 2 (1990) 236--240.

\bibitem{Sa91} K. S. Sarkaria, A generalized van Kampen--Flores theorem, \textit{Proc. Amer. Math. Soc.}, Vol. 11 (1991) 559--565.

\bibitem{Ser77} J.P. Serre. \textit{Linear Representations of Finite Groups}. Springer, New York (1977). 

\bibitem{Si15} S. Simon. Measure equipartitions via finite Fourier analysis, \textit{Geom. Dedicata}, Vol. 179 (2015) 217--228. 

\bibitem{St12} B. Steinberg. \textit{Representation Theory of Finite Groups: An Introductory Approach}. Springer, New York (2012).

\bibitem{ST42} A. H. Stone and J. W. Tukey. Generalized ``sandwich'' theorems, \textit{Duke Math. J.}, Vol. 9, No. 2 (1942) 356--359.

\bibitem{Tv66} H. Tverberg. A generalization of Radon's Theorem. \textit{J. London Math. Soc.}, Vol. 41 (1966) 123--128.

\bibitem{Vo96} A. Yu. Volovikov. On a topological generalization of the Tverberg theorem. \textit{Math. Notes}, Vol. 59, No. 3 (1996) 324--326.

\bibitem{Wi83} C. Wilkerson. A primer on the Dickson invariants, \textit{Proceedings of the Northwestern Homotopy Theory Conference, Contemp. Math. 19} (1983) 421--434. 

\bibitem{Zi17} R. \v Zivaljevi\'c. Topological Methods. In \textit{Handbook of Discrete and Computational Geometry}, J.E. Goodman, J. O'Rourke, and C. Toth eds, Chapman \& Hall/CRC (2017) 551--580.



























\end{thebibliography}
\end{document}